\newcommand{\bA}{\mathbb{A}}
\newcommand{\bN}{\mathbb{N}}
\newcommand{\bC}{\mathbb{C}}
\newcommand{\bR}{\mathbb{R}}
\newcommand{\bD}{\mathbb{D}}
\newcommand{\bV}{\mathbb{\hspace{-.07mm}V\hspace{-.2mm}}}
\newcommand{\cE}{\mathcal{E}}
\newcommand{\cF}{\mathcal{F}}
\newcommand{\cG}{\mathcal{G}}
\newcommand{\cL}{\mathcal{L}}
\newcommand{\cM}{\mathcal{M}}
\newcommand{\Beta}{\text{\rm Beta}}
\newcommand{\IPL}{\text{\rm IPL}}
\newcommand{\EPL}{\text{\rm EPL}}
\newcommand{\Normal}{\mathcal{N}}
\newcommand{\Exp}{\mathrm{Exp}}
\newcommand{\E}{\mathbb E}
\newcommand{\R}{\mathbb{R}}
\newcommand{\N}{\mathbb{N}}
\newcommand{\C}{\mathbb{C}}
\renewcommand{\P}{\mathbb{P}}
\renewcommand{\Re}{\operatorname{Re}}
\renewcommand{\Im}{\operatorname{Im}}
\newcommand{\Var}{\mathop{\mathrm{Var}}\nolimits}
\newcommand{\eps}{\varepsilon}
\newcommand{\eqdistr}{\stackrel{d}{=}}
\newcommand{\todistr}{\overset{d}{\underset{n\to\infty}\longrightarrow}}
\newcommand{\tomix}{\overset{mix}{\underset{n\to\infty}\longrightarrow}}
\newcommand{\toweak}{\overset{w}{\underset{n\to\infty}\longrightarrow}}
\newcommand{\toprobab}{\overset{P}{\underset{n\to\infty}\longrightarrow}}
\newcommand{\tofd}{\overset{f.d.d.}{\underset{n\to\infty}\longrightarrow}}
\newcommand{\toas}{\overset{a.s.}{\underset{n\to\infty}\longrightarrow}}
\newcommand{\toasw}{\overset{a.s.w.}{\underset{n\to\infty}\longrightarrow}}
\newcommand{\ton}{\overset{}{\underset{n\to\infty}\longrightarrow}}
\newcommand{\bsl}{\backslash}
\newcommand{\ind}{\mathbbm{1}}
\newcommand{\dd}{{\rm d}}
\newcommand{\eee}{{\rm e}}
\theoremstyle{plain}
\newtheorem{theorem}{Theorem}[section]
\newtheorem{lemma}[theorem]{Lemma}
\newtheorem{corollary}[theorem]{Corollary}
\newtheorem{proposition}[theorem]{Proposition}
\theoremstyle{definition}
\newtheorem{example}[theorem]{Example}
\theoremstyle{remark}
\newtheorem{remark}[theorem]{Remark}
\begin{document}

\author{Rudolf Gr\"ubel}
\address{Rudolf Gr\"ubel, Institut f\"ur Mathematische Stochastik,
Leibniz Universit\"at Hannover,
Welfengarten 1,
30167 Hannover, Germany
}
\email{rgrubel@stochastik.uni-hannover.de}

\author{Zakhar Kabluchko}
\address{Zakhar Kabluchko, Institut f\"ur Stochastik,
Universit\"at Ulm,
Helmholtzstr.\ 18,
89069 Ulm, Germany}
\email{zakhar.kabluchko@uni-ulm.de}

\title[Functional CLT for Branching Random Walks]{A functional central limit theorem for branching random walks, almost sure weak convergence, and applications to random trees}
\keywords{Branching random walk, functional central limit theorem, Gaussian analytic function, binary search trees,
  random recursive trees, Quicksort distribution, stable convergence, mixing convergence, almost
  sure weak convergence, P\'olya urns, Galton--Watson processes}

\subjclass[2010]{Primary, 60J80; secondary, 60F05, 60F17, 60B10, 68P10, 60G42}
\thanks{}
\begin{abstract}
Let $W_{\infty}(\beta)$ be the limit of the Biggins martingale $W_n(\beta)$ associated to a supercritical branching random walk with mean number of offspring $m$. We prove a functional central limit theorem stating that as $n\to\infty$ the process
$$
D_n(u):= m^{\frac 12 n} \left(W_{\infty}\left(\frac{u}{\sqrt n}\right) - W_{n}\left(\frac{u}{\sqrt n}\right) \right)
$$
converges weakly, on a suitable space of analytic functions, to a Gaussian random analytic function with random variance.
Using this result we prove central limit theorems for the total path length of random trees. In the setting of binary search trees, we recover a recent result of R.\ Neininger [Refined Quicksort Asymptotics, \textit{Rand.\ Struct.\ and Alg.}, to appear], but we also prove a similar theorem for uniform random recursive trees.
Moreover, we replace weak convergence in Neininger's theorem by the almost sure weak (a.s.w.)\ convergence of probability transition kernels.
In the case of binary search trees, our result states that
$$
\cL\left\{\sqrt{\frac{n}{2\log n}} \left(\EPL_{\infty} - \frac{\EPL_n-2n\log n}{n}\right)\Bigg | \cG_{n}\right\} \toasw \{\omega\mapsto\Normal_{0,1}\},
$$
where $\EPL_n$ is the external path length of a binary search tree $X_n$ with $n$ vertices, $\EPL_{\infty}$ is the limit of the R\'egnier martingale, and $\cL(\,\cdot\, |\cG_n)$ denotes the conditional distribution w.r.t.\ the $\sigma$-algebra $\cG_n$ generated by $X_1,\ldots,X_n$.
A.s.w.\ convergence is stronger than weak and even stable convergence. We prove several basic properties of the a.s.w.\ convergence
and study a number of further examples in which the a.s.w.\ convergence appears naturally. These include the classical central limit theorem for Galton--Watson processes and the P\'olya urn.
\end{abstract}

\maketitle

\section{Introduction}
The research that led to the present paper was motivated by a question from
the analysis of algorithms, specifically of the famous \textsc{Quicksort} and the closely
related binary search tree (BST) algorithms. The question concerns the second-order
(distributional) asymptotics of the number of comparisons needed by \textsc{Quicksort} or,
equivalently, of the total path length of the associated random binary search trees, if the
input to the algorithm is random.

Let the input sequence consist of independent random variables $U_1,U_2,\ldots$ distributed uniformly on the interval
$[0,1]$.  In the version considered here the \textsc{Quicksort} algorithm applied to the list $U_1,\ldots,U_n$ proceeds
as follows.  It places $U_1$, the first element of the list, at the root of a binary tree and divides the remaining
elements into two sublists: The elements that are smaller than $U_1$ are collected into a sublist located to the left of
$U_1$, whereas the elements larger than $U_1$ are put into a sublist located to the right of $U_1$.  (Hence
the first element of the list serves as the pivot, that is, the element used to subdivide the list).  The
procedure is then applied recursively to both sublists until only sublists of size $1$ remain.  The random tree which is
created in this way is called the \textit{binary search tree} (BST); a more detailed description will be provided in
Section~\ref{subsec:BST}.

For the analysis of the complexity of \textsc{Quicksort} the number $K_n$ of comparisons needed to sort
the list $U_1,\ldots,U_n$ is of major interest. In terms of  the tree structure of sublists this is the sum
of the depths of the nodes (also called the internal path length) of the binary search tree. As shown
by~\citet{regnier}, a suitable rescaling of $K_n$ leads to a martingale $Z_n$ that converges almost surely
to some limit variable $Z_\infty$ as $n\to\infty$,
\begin{equation}\label{eq:regnier}
     Z_n:=\frac{K_n - \E K_n}{n+1} \toas Z_{\infty}.
\end{equation}
The law $\cL(Z_\infty)$ of the limit  is  known as the \textsc{Quicksort} distribution; it has been
characterized in terms of a stochastic fixed point equation by~\citet{roesler}.

Very recently \citet{neininger} obtained a central limit theorem (CLT) accompanying~\eqref{eq:regnier} by proving
the distributional convergence
\begin{equation}\label{eq:QS2}
\sqrt{\frac{n}{2\log n}} (Z_{\infty}-Z_n) \todistr \Normal_{0,1},
\end{equation}
where $\Normal_{0,1}$ is the standard normal distribution. Neininger used the contraction method, which in the
present context has been introduced by~\citet{roesler} in connection with the distributional convergence
in~\eqref{eq:regnier}. A proof based on the method of moments followed shortly~\cite{fuchs}.

The result~\eqref{eq:QS2} is surprising as for many martingales the step from a
strong convergence result to a second-order distributional limit theorem  leads to a \textit{variance mixture} of normal
distributions; see~\citet{hall_heyde_book}. Quite generally, whenever one has a martingale convergence
result $Z_n\toas Z_{\infty}$ it is natural to ask whether there is a corresponding distributional limit theorem in the sense that, for some
normalizing sequence $b_n\to \infty$ and some non-degenerate random variable $Y$,
\begin{equation}\label{eq:CLT_martingale_intro}
b_n (Z_{\infty}-Z_{n}) \todistr Y.
\end{equation}
Indeed, provided that appropriate technical conditions (which can be found in the references cited below) are satisfied,
a distributional limit theorem of the type~\eqref{eq:CLT_martingale_intro} is known to hold if
\begin{enumerate}
\item[(a)] $Z_n$ is the proportion of black balls in the P\'olya urn after $n$ draws; see~\citet[pp.~80--81]{hall_heyde_book}.
\item[(b)] $Z_n=\sum_{i=1}^{n} a_i \xi_i$, where $\xi_1,\xi_2,\ldots$ are i.i.d.\ random variables with zero mean,
                 unit variance, and $a_1,a_2,\ldots$ is an appropriate square summable deterministic sequence; see~\citet{loynes}.
\item[(c)] $Z_n=N_n/m^n$, where $N_n$ is a supecritical Galton--Watson process with mean number of offspring $m$;
                  see~\citet{athreya} and~\citet{heyde}.
\item[(d)] $Z_n$ is the Biggins martingale of the branching random walk; see~\citet{roesler_topchii_vatutin1}.
\end{enumerate}
In this list, (a), (c) and (d) can be related to the analysis of \textsc{Quicksort}, and in all three cases, the limit
distribution is a nondegenerate mixture of normals.

We will use the well-known connection between the BST algorithm and the continuous-time branching random walk (BRW) to
explain the degeneracy phenomenon. The state at time $t$ of a BRW is a random point measure $\pi_t$ recording the
particle positions at that time; see Section~\ref{sec:BRW} for a detailed description.  A specific choice of branching
mechanism and shift distribution leads to a representation of the point measure given by the depths of the external
nodes in the BST with input size $n$ as the value $\pi_{T_n}$ at the random time
$T_n$ of the birth of the $n$th particle; see~\citet{chauvin_etal}, \cite{chauvin_rouault}, as well as the earlier work
by~\citet{devroye} that connected Galton--Watson processes and random search trees.  The BRW detour provides a new and
independent proof of Neininger's result.  In addition we obtain a stronger mode of convergence. Again, this is a topic
familiar in connection with martingale central limit theorems, where it is known that a strengthening of distributional
convergence to R\'enyi's concept of stable convergence is often possible. In our situation we can go beyond even the
stable convergence, obtaining what we call \emph{almost sure weak convergence}: With $(\cG_n)_{n\in\bN}$ the
martingale filtration we regard the conditional distribution of the left hand side
of~\eqref{eq:CLT_martingale_intro} given $\cG_n$ as a random variable with values in the set of Borel probability measures
on the real line, on this set we take the topology of weak convergence, and we show that the conditional distribution
converges almost surely in this space as $n\to\infty$. In the \textsc{Quicksort} context, with $\cG_n$ the
$\sigma$-field generated by $U_1,\ldots,U_n$, this results in
\begin{equation}\label{eq:QS3}
\cL\left\{\sqrt{\frac{n}{2\log n}} (Z_{\infty}-Z_n) \Bigg | \cG_n\right\} \toasw \{\omega\mapsto \Normal_{0,1}\}.
\end{equation}
This can be applied to obtain strong prediction intervals; see Remark~\ref{rem:prediction}.

It turns out that in our context the familiar encoding of the BRW point measures by the Biggins martingale can best be
exploited via a suitable \emph{functional} central limit theorem for the latter. The Biggins martingale arises as a
suitably standardized moment generating function of the point measures of particle positions and may thus be regarded,
together with its limit, as a stochastic process indexed by a complex parameter $\beta$ that varies over some open set
containing~$0$. For $\beta$ fixed, an associated second order distributional limit has already been obtained
by~\citet{roesler_topchii_vatutin1}, see (d) in the above list. Noting that the R\'egnier martingale appears as the
derivative at $\beta=0$ of this process we are lead to rescale $\beta$ locally in order to obtain a the functional
version that captures the local behaviour. Of course, we also want a non-trivial limit.
This is indeed possible and leads to Theorems~\ref{theo:FCLT} and~\ref{thm:mainlim}, which we regard as our main results.
Again, we obtain almost sure weak convergence, now on a suitable space of analytic functions. Further,
the distribution of the limit can be represented as the distribution of the Gaussian random analytic function given by
\begin{equation*}
  \xi(u)=\sum_{k=0}^\infty   \xi_k \frac{u^k}{\sqrt{k!}},\quad u\in\bC,
\end{equation*}
where $\xi_0,\xi_1,\ldots$ is a sequence of  independent standard normals. Much as in the
classical case of Donsker's theorem, see~\citet{billingsley_book}, this may serve as the starting point for
distributional limit theorems for various functionals of the processes, but we believe that, apart from its applicability to the
question that we started with, the BRW  functional limit theorem is of interest in its own.

Finally, the above approach is not limited to binary search trees: We also obtain an analogue of Neininger's result for
random recursive trees (RRTs). In fact, we obtain a new result even in the setting of the P\'olya urn, see
Section~\ref{subsec:polya}, and we treat Galton-Watson processes, BRW, BST, RRT with a unified method.

The paper is organized as follows. In Section~\ref{sec:BRW} we define the branching random walk and introduce the basic
notation. The functional central limit theorem for the BRW is stated in Section~\ref{sec:FCLT}. In Section~\ref{sec:asw}
we define the almost sure weak convergence and prove some of its properties.  A stronger version of the functional CLT
involving the notion of the a.s.w.\ convergence is stated in Section~\ref{sec:FCLT_strong}. In the same section, we
state a number of applications of the functional CLT including~\eqref{eq:QS2} and its analogues for other random
trees. Proofs are given in Sections~\ref{sec:moment_estimate_Biggins}, \ref{sec:proof_FCLT},
and~\ref{sec:proof_random_tree}.

\section{Branching random walk}\label{sec:BRW}

\subsection{Description of the model}
An informal picture of a \textit{branching random walk} (BRW) is that of a time-dependent random cloud of particles
located on the real line and evolving through a combination of splitting (branching) and shifting
(random walk). The particles are
replaced at the end of their possibly random lifetimes by a random number of offspring, with locations relative
to their parent being random too.
Our results will be valid for branching random walks both in discrete and continuous time. Let us describe both models.


\vspace*{2mm}
\noindent
\textit{Discrete-time branching random walk.} At time $0$ we start with one particle located at zero. At any time $n\in\N_0$ every particle which is alive at this time disappears and is replaced (independently of all other particles and of the past of the process) by a random, non-empty cluster of particles whose displacements w.r.t.\ the original particle are distributed according to some fixed point process $\zeta$ on $\R$. The number of particles in a cluster $\zeta$ is (in general) random and is always assumed to be a.s.\ finite.
Let $N_n$ be the number of particles which are alive at time $n\in \N_0$. Note that $\{N_n\colon n\in\N_0\}$ is a Galton--Watson branching process. Denote by $z_{1,n}\leq \ldots \leq z_{N_n,n}$ the positions of the particles at time $n$. Let
$$
\pi_n=\sum_{j=1}^{N_n} \delta_{z_{j,n}}
$$
be the point process recording the positions of the particles at time $n$.  The only parameter needed to identify the law of the discrete-time BRW is the law of the point process $\zeta$ encoding the shifts of the offspring particles w.r.t.\ their parent.

\vspace*{2mm}
\noindent
\textit{Continuous-time branching random walk.}
At time $0$ one particle is born at position $0$. After its birth, any particle moves (independently of all other particles and of the past of the process) according to a L\'evy process. After an exponential time with parameter $\lambda>0$, the particle disappears and at the same moment of time it is replaced by a random cluster of particles whose displacements w.r.t.\ the original particle are distributed according to some fixed point process $\zeta$. The new-born particles behave in the same way. All the random mechanisms involved are independent.  Denote the number of particles at time $t\geq 0$ by $N_t$ and  note that $\{N_t\colon t\geq 0\}$ is a branching process in continuous time. Let $z_{1,t} \leq \ldots \leq z_{N_t,t}$ be the positions of the particles at time $t$. Let
$$
\pi_t=\sum_{j=1}^{N_t} \delta_{z_{j,t}}
$$
be the point process recording the positions of the particles at time $t$. The law of the continuous-time BRW is determined by the parameters of the L\'evy process, the intensity $\lambda$, and the law of the point process $\zeta$.
\vspace*{2mm}

Both models can be treated by essentially the same methods. To simplify the notation, we will henceforth deal with the discrete-time BRW and indicate, whenever necessary, how the proofs should be modified in the continuous-time case.

\subsection{Standing assumptions and the Biggins martingale}\label{subsec:biggins_martingale}
Let us agree that  $\sum_{z\in \zeta}$ means a sum taken over all points of the point process $\zeta$, where the points are counted \textit{with multiplicities}.
We make the following \textit{standing assumptions} on the BRW.

\vspace*{2mm}
\noindent
\textsc{Assumption A:}
The cluster point process $\zeta$ is a.s.\ non-empty, finite,  and the probability that it consists of exactly one particle is strictly less than $1$.

\vspace*{2mm}
\noindent
\textsc{Assumption B:} There are $p_0>2$ and $\beta_0>0$ such that for all $\beta\in (-\beta_0,\beta_0)$,
\begin{equation}\label{eq:standing_assumption}
\E \left[\left(\sum_{z\in\pi_1} \eee^{\beta z} \right)^{p_0}\right]<\infty.
\end{equation}

\vspace*{2mm}

It follows from~\eqref{eq:standing_assumption} that the function
\begin{equation}\label{eq:m_beta_def}
m(\beta) = \E \left[ \sum_{z\in \pi_1} \eee^{\beta z}\right]
\end{equation}
is well-defined and analytic in the strip $\{\beta\in \C\colon |\Re \beta|<\beta_0\}$. Note that $m(\beta)$ is the moment generating function of the intensity measure of $\pi_1$.
Assumption~A implies that the BRW under consideration is \textit{supercritical}, that is the mean number of particles at time $1$ satisfies
$$
m:=m(0)>1.
$$
In a sufficiently small neighborhood of $0$ the function
\begin{equation}\label{eq:varphi_def}
\varphi(\beta) =\log m(\beta)
\end{equation}
is well-defined and analytic, and
the restriction of $\varphi$ to real $\beta$ is convex. By the martingale convergence theorem, there is a random variable $N_{\infty}$ such that
\begin{equation}\label{eq:N_infty_def}
\frac{N_n}{m^n} \toas N_{\infty}.
\end{equation}
Since $\E N_1^2<\infty$ (by Assumption~B) and the BRW never dies out (by Assumption~A), we have $N_{\infty}>0$ a.s. 
The assumption that $\zeta$ is non-empty could be removed (while retaining supercriticality); all results would then hold on the survival event.


A crucial role in the study of the branching random walk is played by the \textit{Biggins martingale}:
\begin{equation}\label{eq:biggins_martingale_def}
W_{n}(\beta) = \frac1 {m(\beta)^n} \sum_{z\in \pi_n} \eee^{\beta z}.
\end{equation}
\citet{uchiyama} and~\citet{biggins_uniform} proved that if Assumption~\eqref{eq:standing_assumption} holds with some $p_0\in (1,2]$, then there is $\delta_0>0$ such that the martingale $W_n(\beta)$ is bounded in $L^p$, $0< p\leq p_0$, uniformly over all $\beta\in \C$ with $|\beta|\leq\delta_0$. Furthermore, there is a random analytic function $W_{\infty}(\beta)$ defined for $|\beta|\leq \delta_0$ such that a.s.,
\begin{equation}\label{eq:biggins_martingale_converges}
\lim_{n\to \infty} \sup_{|\beta|\leq \delta_0} |W_{\infty}(\beta) - W_n(\beta)|=0.
\end{equation}
Note that $W_n(0)=\frac{N_n}{m^n}$ and $W_{\infty}(0)=N_{\infty}$, so that~\eqref{eq:biggins_martingale_converges} contains~\eqref{eq:N_infty_def} as a special case.

\subsubsection*{Notation} We denote by $\Normal_{0,\sigma^2}$ the normal distribution with mean $0$ and variance $\sigma^2$. Given a non-negative random variable $S^2$ we denote by $\Normal_{0, S^2}$ the mixture of zero mean normal distributions with random variance given by $S^2$. Throughout the paper we will use the notation
\begin{equation}\label{eq:def_sigma_tau}
\sigma^2=\Var N_{\infty} \geq 0,
\quad
d=\varphi'(0),
\quad \tau^2 =\varphi''(0)\geq 0.
\end{equation}
A generic constant which may change from line to line is denoted by $C$.

\section{Functional Central Limit Theorem for the Biggins martingale}\label{sec:FCLT}

\subsection{Statement of the FCLT}\label{subsec:FCLT_statement}
Under suitable conditions, \citet{roesler_topchii_vatutin1} proved for real $\beta$ in a certain interval around $0$ a CLT of the form
\begin{equation}\label{eq:CLT_for_W_n}
\frac{m^{\frac 12 n}}{\sqrt{\Var W_{\infty}(\beta)}}(W_{\infty}(\beta) - W_{n}(\beta)) \todistr \Normal_{0, W_{\infty}(\beta)}.
\end{equation}
Taking here $\beta=0$ and recalling that $W_n(0) = \frac{N_n}{m^n}$ one recovers the CLT for Galton--Watson processes~\cite{athreya,heyde}:
\begin{equation}\label{eq:heyde_clt}
m^{\frac 12 n} \left(N_{\infty} - \frac{N_n}{m^n}\right) \todistr \Normal_{0,  \sigma^2 N_{\infty}}.
\end{equation}
See also~\cite[p.~53]{athreya_ney_book} (discrete time case), \cite[p.~123]{athreya_ney_book} (continuous time case), \cite[Thm.~3.1, p.~28]{asmussen_hering_book} (a statement with a stronger mode of convergence), \cite[Ch.~9.2]{luschgy_book} (statistical aspects).

We will prove  a \textit{functional} version of~\eqref{eq:CLT_for_W_n}. That is, we will consider the left-hand side of~\eqref{eq:CLT_for_W_n} as a random analytic function and prove weak convergence on a suitable function space. In order to obtain a non-degenerate limit process it will be necessary to introduce a spatial rescaling into the Biggins martingale. Namely, we consider
\begin{equation}\label{eq:D_n_u_def1}
D_n(u) = m^{\frac 12 n} \left(W_{\infty}\left(\frac{u}{\sqrt{n}}\right) -  W_{n}\left(\frac{u}{\sqrt{n}}\right)\right).
\end{equation}
We have to be explicit about the function space to which $D_n$ belongs. Given $R>0$ let $\bD_R$ (resp.,\ $\overline \bD_R$) be the open (resp.,\ closed) disk of radius $R$ centered at the origin. Denote by $\bA_R$ the set of functions which are continuous on $\overline \bD_R$  and analytic in $\bD_R$. Endowed with the supremum norm, $\bA_R$ becomes a Banach space.  Note that $\bA_R$ is a closed linear subspace of the Banach space $C(\overline \bD_R)$ of continuous functions on $\overline \bD_R$. Being closed under multiplication, $\bA_R$ is even a Banach algebra. We always consider $D_n$ as a random element with values in $\bA_R$ (which is endowed with the Borel $\sigma$-algebra generated by the topology of uniform convergence). Recall that $W_n$ and $W_{\infty}$ are well defined on the disk $\overline \bD_{\delta_0}$ for some $\delta_0>0$, so that $D_n$ is indeed well defined as an element of $\bA_R$ for $n>(R/\delta_0)^2$.   Our results remain valid for some other choices o
 f the function space, for
 example one could replace $\bA_R$ by the Hardy space $H^2(\bD_R)$. Recall that $\sigma^2=\Var N_{\infty}$ and $\tau^2=\varphi''(0)$.
\begin{theorem}\label{theo:FCLT}
Fix any $R>0$. The following convergence of random analytic functions holds weakly on the Banach space $\bA_R$:
\begin{equation}
\{D_n(u)\colon u\in \overline \bD_{R}\} \toweak \{\sigma\, \sqrt {N_{\infty}}\, \xi(\tau u)\colon u\in \overline \bD_R\},
\end{equation}
where $\xi$ is a random analytic function which is defined in Section~\ref{subsec:GAF} below, and which is independent of $N_{\infty}$.
\end{theorem}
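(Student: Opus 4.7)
My plan is to combine a branching decomposition of the Biggins martingale, a conditional CLT for triangular arrays, and a tightness argument on $\bA_R$ via Montel's theorem.

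\textbf{Step 1 (Decomposition).} By the branching property, the subtrees rooted at the particles of generation $n$ are independent copies of the whole BRW shifted by the particle's position, so conditionally on $\cF_n:=\sigma(\pi_n)$,
\begin{equation*}
W_\infty(\beta)-W_n(\beta)=\frac{1}{m(\beta)^n}\sum_{z\in\pi_n}\eee^{\beta z}\bigl(W_\infty^{(z)}(\beta)-1\bigr),
\end{equation*}
where $\{W_\infty^{(z)}\}_{z\in\pi_n}$ are iid copies of $W_\infty$ in $\bA_{\delta_0}$, independent of $\cF_n$. Plugging in $\beta=u/\sqrt n$ and multiplying by $m^{n/2}$ yields
\begin{equation*}
D_n(u)=\sum_{z\in\pi_n}a_n(z,u)\bigl(W_\infty^{(z)}(u/\sqrt n)-1\bigr),\qquad a_n(z,u):=\frac{m^{n/2}\eee^{uz/\sqrt n}}{m(u/\sqrt n)^n},
\end{equation*}
which exhibits $D_n$ as a conditionally iid sum of $N_n\sim m^n N_\infty$ centred, small, analytic summands---precisely the setting of a triangular-array CLT on $\bA_R$.

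\textbf{Step 2 (Limit identification).} I compute the conditional covariance kernel. By independence,
\begin{equation*}
\E\bigl[D_n(u)\overline{D_n(v)}\bigm|\cF_n\bigr]=s_n(u,v)\,\frac{m^n\,m\bigl((u+\bar v)/\sqrt n\bigr)^n}{m(u/\sqrt n)^n\,m(\bar v/\sqrt n)^n}\,W_n\bigl((u+\bar v)/\sqrt n\bigr),
\end{equation*}
with $s_n(u,v):=\E\bigl[(W_\infty(u/\sqrt n)-1)\overline{(W_\infty(v/\sqrt n)-1)}\bigr]\to\Var N_\infty=\sigma^2$ by $L^2$-continuity of $W_\infty$ at $0$. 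Taylor-expanding $\varphi(\beta)=\log m+d\beta+\tfrac12\tau^2\beta^2+O(\beta^3)$ gives
\begin{equation*}
n\log m+n\varphi\bigl(\tfrac{u+\bar v}{\sqrt n}\bigr)-n\varphi\bigl(\tfrac{u}{\sqrt n}\bigr)-n\varphi\bigl(\tfrac{\bar v}{\sqrt n}\bigr)\longrightarrow\tau^2 u\bar v,
\end{equation*}
while $W_n((u+\bar v)/\sqrt n)\to N_\infty$ by uniform Biggins convergence~\eqref{eq:biggins_martingale_converges}. Hence the conditional covariance tends a.s.\ to $\sigma^2 N_\infty\,\eee^{\tau^2 u\bar v}$, which equals $\Cov\bigl(\sigma\sqrt{N_\infty}\,\xi(\tau u),\overline{\sigma\sqrt{N_\infty}\,\xi(\tau v)}\bigr)$ since $\E[\xi(u)\overline{\xi(v)}]=\eee^{u\bar v}$; an analogous calculation gives $\E[D_n(u)D_n(v)\mid\cF_n]\to\sigma^2 N_\infty\,\eee^{\tau^2 uv}$, matching $\E[\xi(u)\xi(v)]=\eee^{uv}$ and thus pinning down the real-coefficient GAF $\xi$ as the limit. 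A conditional multivariate Lyapunov CLT, whose Lyapunov ratio is controlled by $\sup_n\E|W_\infty(u/\sqrt n)-1|^{p_0}<\infty$ via Uchiyama-Biggins and Assumption~B, then yields (stable) convergence of all finite-dimensional distributions of $D_n$ to those of $\sigma\sqrt{N_\infty}\,\xi(\tau\cdot)$.

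\textbf{Step 3 (Tightness, and main obstacle).} To upgrade the fdd convergence to weak convergence on $\bA_R$, fix $R<R'<R''$. By Montel's theorem, any subset of $\bA_{R'}$ bounded in sup norm is relatively compact in $\bA_R$, so it suffices to establish the uniform moment bound $\sup_n\E\sup_{|u|\le R'}|D_n(u)|^2<\infty$. By Cauchy's integral formula this reduces to the pointwise bound $\sup_n\sup_{|\beta|\le R''}\E|D_n(\beta)|^2<\infty$, which is essentially the variance computation of Step~2 made uniform in $n$ via the Uchiyama-Biggins uniform $L^2$-boundedness of $W_n$; the quantitative moment estimates are collected in Section~\ref{sec:moment_estimate_Biggins}. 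I expect the \emph{main technical obstacle} to lie in precisely this tightness step---converting the pointwise $L^{p_0}$-boundedness of $W_n(\beta)$ into uniform estimates over a complex disk, and verifying the Lyapunov/Lindeberg negligibility uniformly in $u\in\overline\bD_R$ so that a single conditional CLT applies simultaneously at every point. Once both are in hand, any weak subsequential limit is a random element of $\bA_R$ whose fdds are Gaussian with the kernel identified in Step~2, hence equal in law to $\sigma\sqrt{N_\infty}\,\xi(\tau\cdot)$.
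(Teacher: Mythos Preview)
Your proposal is correct and follows essentially the same route as the paper: the branching decomposition of $W_\infty-W_n$ into a conditionally i.i.d.\ sum (your Step~1 is the paper's ``basic decomposition''~\eqref{eq:basic_decomposition}), the conditional Lyapunov CLT with the same covariance computation $\E[D_n(u)D_n(v)\mid\cF_n]\to\sigma^2 N_\infty \eee^{\tau^2 uv}$ via Taylor expansion of $\varphi$ and Biggins' uniform convergence (Section~\ref{subsec:fdd_conv}), and tightness on $\bA_R$ from a uniform pointwise second-moment bound on a larger disk (Section~\ref{subsec:tightness}). The only cosmetic differences are that the paper packages the tightness step by citing a lemma of Shirai rather than invoking Montel directly, and that it actually proves the stronger a.s.w.\ statement (Theorem~\ref{thm:mainlim}) over arbitrary stopping times and then reads off Theorem~\ref{theo:FCLT} as a special case.
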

The proof of Theorem~\ref{theo:FCLT} will be given in Section~\ref{sec:proof_FCLT}. In fact, we will prove a stronger statement (Theorem~\ref{thm:mainlim}, below) in which weak convergence is replaced by the almost sure weak convergence of conditional distributions. This mode of convergence will be studied in detail in Section~\ref{sec:asw}.

\subsection{Gaussian analytic function}\label{subsec:GAF}
The random analytic function $\xi$ appearing in Theorem~\ref{theo:FCLT} is defined as follows. Let $\xi_0,\xi_1,\ldots$ be independent real standard normal variables. Consider the random analytic function $\xi:\bC\to\bC$ defined by
\begin{equation}\label{eq:xi_def}
\xi(u) = \sum_{k=0}^{\infty} \xi_k \frac{u^k}{\sqrt{k!}}.
\end{equation}
With probability $1$, the series converges uniformly on every bounded set because $\xi_n=O(\sqrt{\log n})$ a.s. Note that for every $d\in\N$ and $u_1,\ldots,u_d\in\C$, the $2d$-dimensional real random vector $(\Re \xi(u_1), \Im \xi(u_1), \ldots, \Re \xi(u_d), \Im \xi(u_d))$ is Gaussian with zero mean. The covariance structure of the process $\xi$ is given by
$$
\E [\xi(u)\xi(v)] = \eee^{uv},
\quad
\E [\xi(u)\overline{\xi(v)}] = \eee^{u\bar{v}},
\quad
u,v\in\C.
$$
It follows that $\tilde \xi(u):=\eee^{-u^2/2} \xi(u)$, $u\in\R$, is a stationary real-valued Gaussian process with covariance function
$$
\E [\tilde \xi(u)\tilde \xi(v)]
= \eee^{-\frac 12 (u-v)^2}, \quad u,v\in\R.
$$
The spectral measure of $\tilde \xi$ is the standard normal distribution.
We can view the process $\xi$ as an analytic continuation of the process $\eee^{u^2/2} \tilde \xi(u)$, $u\in\R$, to the complex plane.

A modification of $\xi$ in which the variables $\xi_0,\xi_1,\ldots$ are independent \textit{complex} standard normal is a fascinating object called the plane Gaussian Analytic Function (GAF)~\cite{sodin_tsirelson}. A remarkable feature of the plane GAF is that its zeros form a point process whose distribution is invariant with respect to arbitrary translations and rotations  of the complex plane. The law of the zero set of $\xi$ as defined in the present paper is invariant with respect to real translations only. The function $\xi$ and its complex analogue appeared as limits of certain random partition functions; see~\cite{kabluchko_klimovsky1,kabluchko_klimovsky2}.


\section{Almost sure weak convergence of probability kernels}\label{sec:asw}
Our results are most naturally stated using the notion of \textit{almost sure weak} (a.s.w.)\ convergence of probability kernels. This mode of convergence seems especially natural when dealing with randomly growing structures.  In this section we define   a.s.w.\ convergence  and study its relation to other modes of convergence.

\subsection{Basic definitions}
Let $E$ be a complete separable metric (Polish) space endowed with the Borel $\sigma$-algebra $\cE$. Let $\cM_1(E)$ be the space of probability measures on $(E, \cE)$. The weak convergence on $\cM_1(E)$ is metrized by the L\'evy--Prokhorov metric which turns $\cM_1(E)$ into a complete separable metric space.

\subsubsection*{Probability kernels}   A \textit{(probability transition) kernel} is a random variable $Q:\Omega\to \cM_1(E)$ defined on a probability space $(\Omega, \cF, \P)$ and taking values in $\cM_1(E)$. We will write $Q(\omega)$ for the probability measure on $E$ corresponding to the outcome $\omega\in\Omega$, and $Q(\omega; B)=Q(\omega)(B)$ for the value assigned by the probability measure $Q(\omega)$ to a set $B\in\cE$. Instead of the above definition of kernels we can use the following:  A kernel from a probability space $(\Omega,\cF,\P)$ to $(E,\cE)$ is a function $Q:\Omega\times \cE\to \bR$ such that
\begin{itemize}
\item[(i)] for every  set $B\in\cE$, the map $\omega\mapsto Q(\omega;B)$ is $\cF$-Borel-measurable;
\item[(ii)] for every $\omega\in\Omega$, the map $B\mapsto Q(\omega;B)$ defines a probability measure on $(E,\cE)$.
\end{itemize}
Probability kernels are also called random probability measures on $E$.

\subsubsection*{Conditional distributions}
In this paper, kernels will mostly appear in form of a conditional distribution of a random variable given a $\sigma$-algebra. Let $X:\Omega\to E$ be a random variable defined on $(\Omega,\cF,\P)$ and taking values in a Polish space $E$. Given a  $\sigma$-algebra $\cG\subset \cF$, a kernel $Q: \Omega \to \cM_1(E)$ is called (a version of) the \textit{conditional distribution} of $X$ given $\cG$ if
\begin{itemize}
\item[(i)] $Q$ is $\cG$-measurable as a map from $\Omega$ to $\cM_1(E)$,
\item[(ii)] for all bounded Borel functions $f:E\to\R$ and all $A\in\cG$,
  \begin{equation}\label{eq:cond_distr_def}
     \int_A f(X(\omega))\, \P(d\omega)
     \; = \;
     \int_A \left(\int_{E} f(z)\, Q(\omega;dz)\right) \P(\dd\omega).
\end{equation}
\end{itemize}
In this case we use the notation $Q=\cL(X|\cG)$.


\subsubsection*{Almost sure weak convergence} A sequence $Q_1,Q_2,\ldots:\Omega\to \cM_1(E)$ of kernels defined on a common probability space $(\Omega, \cF, \P)$ is said to converge \textit{almost surely  with respect to weak convergence} \textit{(a.s.w.)}\ as $n\to\infty$ if
there exists a set $A\in\cF$ with $\P[A]=1$ such that, for all $\omega\in A$, the probability measure $Q_n(\omega)$ converges
weakly on $E$ to the probability measure $Q(\omega)$, again as $n\to \infty$.

Let us state the above definition in a slightly different (but equivalent) form. Given a bounded Borel function $f:E\to\R$ and a kernel $Q$ consider the random variable $Q^f:\Omega\to\R$ defined by
$$
Q^f:\omega \mapsto \int_E f(z)Q(\omega;\dd z).
$$
Then, a sequence of kernels  $Q_1,Q_2,\ldots:\Omega\to \cM_1(E)$ converges to a kernel $Q$ in the a.s.w.\ sense if and only if for every bounded continuous function $f:E\to\R$ we have
$$
Q_n^f \toas Q^{f}.
$$
In fact, if we know that for every bounded continuous function $f$, the random variable $Q_n^f$ converges to \textit{some} limit in the a.s.\ sense, then there is a kernel $Q$ such that $Q_n$ converges to $Q$ a.s.w.;\ see~\cite{berti_etal}.
\begin{remark}
A.s.w.\ convergence contains  a.s.\ convergence as a special case. Indeed, let $X,X_1,X_2,\ldots$ be random variables on the probability space $(\Omega,\cF,\P)$. Then, the sequence $X_n$ converges a.s.\ to the random variable $X$ if and only if the sequence of kernels $Q_n:\omega\mapsto \delta_{X_n(\omega)}$ a.s.w.\ converges to the kernel $Q:\omega\mapsto \delta_{X(\omega)}$.
\end{remark}
\begin{remark}
A.s.w.\ convergence contains weak convergence as a special case. Let $\mu, \mu_1,\mu_2,\ldots$ be probability measures on $E$. The sequence $\mu_n$ converges weakly to $\mu$ if and only if the sequence of kernels $Q_n:\omega\mapsto \mu_n$ converges a.s.w.\ to the kernel $Q:\omega\mapsto\mu$.
\end{remark}
\begin{remark}
The central limit theorem can be extended to sequences of random variables which are i.i.d.\ conditionally on some $\sigma$-algebra~\cite{grzenda_zieba}. This and some related results~\cite{nowak_zieba} fit into the framework of a.s.w.\ convergence.
\end{remark}

\subsubsection*{Stable and mixing  convergence}
The a.s.w.\ convergence is related to the stable convergence which was introduced by~\citet{renyi_sets}, \cite{renyi_stable}, \cite{renyi_revesz_mixing_variables}. We recall the definition of stable convergence referring to~\cite{aldous_eagleson} for more details and references.
A sequence of kernels $Q_1,Q_2,\ldots:\Omega\to \cM_1(E)$ converges \textit{stably} to a kernel $Q:\Omega\to \cM_1(E)$ if for every set
$A\in\cF$ and every bounded continuous function $f:E\to\R$, we have
\begin{equation}\label{eq:stable_def_more_general}
\lim_{n\to\infty} \int_A \left(\int_{E} f(z) Q_{n} (\omega; \dd z)\right) \P(\dd\omega)  = \int_A \left(\int_{E} f(z) Q (\omega; \dd z)\right) \P(\dd\omega).
\end{equation}

Of particular interest for us will be the following special case of this definition. Let $X_1,X_2,\ldots$ be a sequence of random variables defined on a probability space $(\Omega,\cF,\P)$ and taking values in a Polish space $E$.
We say that $X_n$ converges stably to a kernel $Q:\Omega\to \cM_1(E)$ if the sequence of kernels $Q_n:\omega\mapsto \delta_{X_n(\omega)}$ converges stably to $Q$. That is to say, for every set $A\in\cF$ and every bounded continuous function $f:E\to\R$, we have
\begin{equation}\label{eq:stable_def}
\lim_{n\to\infty} \int_A  f(X_n(\omega)) \P(\dd\omega)
=
\int_A \left(\int_{E} f(z) Q (\omega; \dd z)\right) \P(\dd\omega).
\end{equation}
Taking in this definition $A=\Omega$ we see that stable convergence implies weak convergence of $X_n$ to the law obtained by mixing $Q(\omega)$ over $\P(\dd \omega)$.

A special case of stable convergence is the mixing convergence. We say that $X_n$ converges to a probability distribution $\mu$ on $E$ in the \textit{mixing} sense if $X_n$ converges stably to the kernel $Q:\omega \mapsto \mu$. In this case, we write
$$
X_n\tomix \mu.
$$
By the above, mixing convergence implies weak convergence to the same limit.

Another way of expressing these definitions is the following: A sequence of random variables $X_n:\Omega\to E$ converges stably if for every event $A\in \cF$ with $\P[A]>0$
the conditional distribution of $X_n$ given $A$ converges weakly to \textit{some} probability distribution  $\mu_A$ on $E$. The limiting probability distribution  is given by
$$
\mu_A := \frac{1}{\P[A]} \E [Q\ind_A]
$$
and, in general, depends on $A$.
The limiting kernel $Q$ can be seen as the Radon--Nikodym density of the $\cM_1(E)$-valued measure $A\mapsto \P[A] \mu_A$. If the limiting distribution $\mu_A$ does not depend on the choice of $A$, then we have mixing convergence.



\subsection{An example of a.s.w.\ convergence: The P\'olya urn}\label{subsec:polya}
Consider an urn initially containing $b$ black and $r$ red balls.  In each step, draw a ball from the urn at random and
replace it together with $c$ balls of the same color. Let $B_n$ and $R_n$ be the number of black and red balls after $n$
draws and let $\cF_n$ be the $\sigma$-algebra generated by the first $n$ draws. It is well-known that
the proportion $Z_n$ of black balls after $n$ draws is a martingale w.r.t.\ to the filtration $\{\cF_n\}_{n\in\N}$ and that
\begin{equation}\label{eq:Z_infty_distr}
Z_n:=\frac{B_n}{B_n+R_n}\toas Z_{\infty}\sim \Beta\left(\frac bc, \frac rc\right).
\end{equation}
We claim that
\begin{equation}\label{eq:polya_asw}
Q_n:=\cL\left\{\sqrt n (Z_{\infty} - Z_{n})\Big | \cF_n\right\} \toasw \{\omega \mapsto \Normal_{0,S^2(\omega)}\}=:Q_{\infty},
\end{equation}
where $S^2(\omega)=Z_{\infty}(\omega)(1-Z_{\infty}(\omega))$. The kernel $Q_{\infty}$ on the right-hand side maps an outcome $\omega$ to the centered normal distribution on $\R$ with variance $S^2(\omega)$.
We will prove in Proposition~\ref{prop:asw_weak} and Remark~\ref{rem:prop:asw_weak} below that~\eqref{eq:polya_asw} implies distributional convergence to the normal mixture:
\begin{equation}\label{eq:polya_asw_1}
\sqrt n (Z_{\infty}-Z_n) \todistr  \Normal_{0,S^2}.
\end{equation}
One can establish~\eqref{eq:polya_asw_1} as a direct consequence of the de Moivre--Laplace CLT by noting that conditionally on $Z_{\infty}=p$, the results of individual draws are i.i.d.\ Bernoulli variables with parameter $p$. Of course, \eqref{eq:polya_asw_1} is well-known; see~\cite[Section~3]{heyde_CLT_LIL} or~\cite[pp.~80--81]{hall_heyde_book} (where it is deduced as a special case of the CLT for martingales), but~\eqref{eq:polya_asw} is stronger than~\eqref{eq:polya_asw_1}.

\begin{proof}[Proof of~\eqref{eq:polya_asw}]
The random variables $B_n,R_n,Z_n$ are $\cF_n$-measurable. For the conditional law of $Z_{\infty}$ given $\cF_n$ we have, recalling~\eqref{eq:Z_infty_distr},
$$
\cL(Z_{\infty}|\cF_n) \sim \Beta\left(\frac{B_n}{c}, \frac {R_n}{c}\right).
$$
So, the conditional law $Q_n$ on the left-hand side of~\eqref{eq:polya_asw} is given by the kernel
$$
Q_n: \omega \mapsto \cL\left\{\sqrt{n} \left(B_{\frac 1c B_n(\omega), \frac 1c R_n(\omega)} - \frac{B_n(\omega)}{B_n(\omega)+R_n(\omega)}\right)\right\},
$$
where $B_{\alpha,\beta}$ denotes a random variable with $\Beta(\alpha,\beta)$ distribution.

We will use the following CLT for the Beta distribution. Let $\alpha_n,\beta_n> 0$ be two sequences such that $\alpha_n,\beta_n\to +\infty$ and $\frac{\alpha_n}{\alpha_n+\beta_n}\to p\in (0,1)$, as $n\to\infty$. Then,
\begin{equation}\label{eq:Beta_distr_CLT}
U_n:=\sqrt{\alpha_n+\beta_n}\left(B_{\alpha_n, \beta_n} - \frac{\alpha_n}{\alpha_n+\beta_n}\right)\todistr \Normal_{0, p(1-p)}.
\end{equation}
The proof of~\eqref{eq:Beta_distr_CLT} is standard and proceeds as follows.  Denote by
$\Gamma_{\alpha_n},\Gamma_{\beta_n}$ independent random variables having Gamma distributions with shape parameters
$\alpha_n$ and $\beta_n$ respectively, and scale parameter $1$. Since $B_{\alpha_n,\beta_n}$ has the same distribution as $\frac{\Gamma_{\alpha_n}}{\Gamma_{\alpha_n}+ \Gamma_{\beta_n}}$, we can rewrite the left-hand side of~\eqref{eq:Beta_distr_CLT} as follows:
$$
U_n\eqdistr \frac{\beta_n\Gamma_{\alpha_n} - \alpha_n \Gamma_{\beta_n}}{\sqrt{\alpha_n\beta_n (\alpha_n+\beta_n)}} \cdot \frac{\sqrt{\alpha_n\beta_n}}{\Gamma_{\alpha_n} + \Gamma_{\beta_n}}.
$$
The first factor converges weakly to the standard normal distribution (as one can easily see by computing its characteristic function), whereas the second factor converges in probability to $1$.
Slutsky's lemma completes the proof of~\eqref{eq:Beta_distr_CLT}.

Now, we apply~\eqref{eq:Beta_distr_CLT} to $\alpha_n =\frac 1c B_n(\omega)$ and $\beta_n=\frac 1c R_n(\omega)$. Noting that for a.a.\ $\omega\in\Omega$, we have $p(\omega):=\lim_{n\to\infty}\frac{\alpha_n}{\alpha_n+\beta_n} = Z_{\infty}(\omega)$ and $\alpha_n+\beta_n\sim n$, we obtain that $Q_n(\omega)$ converges weakly to $\Normal_{0, S^2(\omega)}$, for a.a.\ $\omega\in \Omega$. \end{proof}

\subsection{Properties of the a.s.w.\ convergence}
Taken together, the following proposition and examples show that a.s.w.\ convergence is strictly stronger than stable convergence.
\begin{proposition}\label{prop:asw_stable}
Let $Q_1,Q_2,\ldots:\Omega\to\cM_1(E)$ be a sequence of kernels converging to a kernel $Q:\Omega\to\cM_1(E)$ in the a.s.w.\ sense. Then, $Q_n$ converges to $Q$ stably.
\end{proposition}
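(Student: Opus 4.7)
The plan is to prove the implication directly from the definitions, using the dominated convergence theorem.

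Fix any set $A\in\cF$ and any bounded continuous function $f:E\to\R$. I would first translate the a.s.w.\ hypothesis into a statement about the integrated functionals $Q_n^f(\omega)=\int_E f(z)\,Q_n(\omega;\dd z)$: by the portmanteau theorem, weak convergence of $Q_n(\omega)$ to $Q(\omega)$ in $\cM_1(E)$ implies $Q_n^f(\omega)\to Q^f(\omega)$ for every bounded continuous $f$. Hence, on the almost sure event $A_0\in\cF$ provided by the a.s.w.\ assumption,
\begin{equation*}
Q_n^f(\omega)\longrightarrow Q^f(\omega), \qquad \omega\in A_0.
\end{equation*}

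Next, since $|Q_n^f(\omega)|\leq \|f\|_\infty$ for every $n$ and every $\omega$, the sequence $Q_n^f\ind_A$ is uniformly bounded by the integrable constant $\|f\|_\infty\ind_A$. The dominated convergence theorem then yields
\begin{equation*}
\lim_{n\to\infty}\int_A Q_n^f(\omega)\,\P(\dd\omega)
\;=\;\int_A Q^f(\omega)\,\P(\dd\omega),
\end{equation*}
which is exactly the defining identity~\eqref{eq:stable_def_more_general} for stable convergence.

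Since $A$ and $f$ were arbitrary, this completes the verification of stable convergence. There is essentially no obstacle here: the argument reduces to combining the portmanteau characterization of weak convergence on $\cM_1(E)$ with dominated convergence, both applicable because $f$ is bounded and continuous. The only thing one must be careful about is measurability of $\omega\mapsto Q_n^f(\omega)$, which is guaranteed by the kernel definition (property (i) of the second characterization of kernels), so that the integrals in the stable-convergence identity make sense.
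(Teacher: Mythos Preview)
Your proof is correct and follows essentially the same approach as the paper: pointwise convergence of $Q_n^f$ to $Q^f$ almost surely, combined with the uniform bound $|Q_n^f|\leq\|f\|_\infty$ and dominated convergence, yields~\eqref{eq:stable_def_more_general}. Your additional remarks on the portmanteau theorem and on measurability of $\omega\mapsto Q_n^f(\omega)$ are minor elaborations on the same argument.
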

\begin{proof}
Let $f:E\to\R$ be a bounded continuous function.  By definition of the a.s.w.\ convergence, the sequence $Q_n^f(\omega)=\int_{E} f(z) Q_{n} (\omega; \dd z)$ converges to $Q^f(\omega)=\int_{E} f(z) Q (\omega; \dd z)$ for a.a.\ $\omega\in\Omega$. Also, $Q_n^f(\omega)$ is bounded by $\|f\|_{\infty}$. By the dominated convergence theorem, \eqref{eq:stable_def_more_general} holds. So, $Q_n$ converges to $Q$ stably.
\end{proof}

\begin{example}
Let us show that, in general, stable convergence does not imply a.s.w.\ convergence. Let $\xi_1,\xi_2,\ldots$ be non-degenerate i.i.d.\ random variables with probability distribution $\mu$. Then, the sequence of kernels $Q_n:\omega\mapsto \delta_{\xi_n(\omega)}$ converges stably (in fact, mixing) to the kernel $Q:\omega\mapsto \mu$.  This is equivalent to saying that the i.i.d.\ sequence $\xi_1,\xi_2,\ldots$ is mixing in the sense of ergodic theory.  Alternatively, note that by the i.i.d.\ property, $\lim_{n\to\infty} \P[\xi_n\leq x | \xi_k\leq x] =\P[\xi_1\leq x]$ for every fixed $k\in\N$, and apply~\cite[Thm.~2]{renyi_sets}.  However, $Q_n$ does not converge a.s.w.\ because the sequence $\xi_n$ does not converge a.s.
\end{example}
Many classical distributional limit theorems hold, in fact, even in the sense of mixing convergence~\cite{renyi_sets,renyi_revesz_mixing_variables}. In particular, this is the case for the central limit theorem.
\begin{example}
Let $\xi_1,\xi_2,\ldots$ be i.i.d.\
random variables with $\E \xi_i=0$, $\Var \xi_i=1$. Consider the random variables $X_n=\frac 1{\sqrt n}
(\xi_1+\ldots+\xi_n)$. Then, the kernels $Q_n:\omega\mapsto \delta_{X_n(\omega)}$ converge stably (in fact, mixing) to
the kernel $Q:\omega\mapsto \Normal_{0,1}$; see~\cite[Thm.~4]{renyi_sets} or~\cite[Thm.~2]{aldous_eagleson}.  However,
$Q_n$ does not converge a.s.w.\ because the sequence $X_n$ does not converge a.s. On the other hand, the central limit
theorems for branching random walks which we will state and prove below hold not only stably but even in the a.s.w.\ sense.
\end{example}

\begin{proposition}\label{prop:asw_weak}
Let $\{\cF_n\}_{n\in\N}$ be a filtration on a probability space $(\Omega,\cF,\P)$.
Let $X_1,X_2,\ldots$ be a sequence of random variables defined on $(\Omega,\cF,\P)$ and taking values in a Polish space $E$. Assume  that for every $n\in\N$, the random variable $X_n$ is measurable w.r.t.\ the $\sigma$-algebra $\cF_{\infty}=\bigvee_{k\in\N} \cF_k$ (but not necessarily w.r.t.\ $\cF_n$).
If the sequence of conditional laws $Q_n=\cL\{X_n|\cF_n\}$ converges to a kernel $Q:\Omega\to \cM_1(E)$ in the a.s.w.\ sense, then $X_n$ converges stably to $Q$.
\end{proposition}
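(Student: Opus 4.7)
Fix a bounded continuous $f : E \to \R$; the task is to show that for every $A \in \cF$,
\begin{equation*}
 \int_A f(X_n)\, \dd\P \; \underset{n\to\infty}\longrightarrow \; \int_A Q^f\, \dd\P,
\end{equation*}
where $Q_n^f(\omega) = \int f\, \dd Q_n(\omega)$ and similarly for $Q^f$. My plan has two parts: a reduction to $\cF_\infty$--measurable test functions, and a decomposition trick that turns $X_n$ into its conditional expectation $Q_n^f$ modulo a martingale remainder.

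First I would argue that it suffices to prove, for every bounded $\cF_\infty$--measurable $Y : \Omega \to \R$,
\begin{equation*}
 \int Y\, f(X_n)\, \dd\P \; \longrightarrow \; \int Y\, Q^f\, \dd\P.
\end{equation*}
Indeed, since $f(X_n)$ is $\cF_\infty$--measurable by hypothesis, for any $A \in \cF$ one has $\int_A f(X_n)\, \dd\P = \int \E[1_A \mid \cF_\infty]\, f(X_n)\, \dd\P$; the same holds for the right-hand side provided $Q^f$ is $\cF_\infty$--measurable, which follows from the a.s.w.\ convergence and a countable-determining-class argument for the weak topology on $\cM_1(E)$. Setting $Y := \E[1_A \mid \cF_\infty]$ reduces the problem to the displayed identity with an $\cF_\infty$--measurable integrand.

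For the second part I would exploit the key identity that, because $\E[Y \mid \cF_n]$ is $\cF_n$--measurable and $\E[f(X_n) \mid \cF_n] = Q_n^f$,
\begin{equation*}
 \int \E[Y \mid \cF_n]\, Q_n^f\, \dd\P
   \;=\; \int \E[Y \mid \cF_n]\, f(X_n)\, \dd\P,
\end{equation*}
so that
\begin{equation*}
 \int Y\, f(X_n)\, \dd\P \;=\; \int \E[Y \mid \cF_n]\, Q_n^f\, \dd\P \;+\; \int \bigl(Y - \E[Y \mid \cF_n]\bigr)\, f(X_n)\, \dd\P.
\end{equation*}
The remainder is bounded in absolute value by $\|f\|_\infty \, \|Y - \E[Y \mid \cF_n]\|_{L^1}$, which tends to $0$ by L\'evy's upward martingale convergence theorem applied to the bounded $\cF_\infty$--measurable $Y$. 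For the main term, $\E[Y \mid \cF_n] \to Y$ a.s.\ (bounded by $\|Y\|_\infty$) and $Q_n^f \to Q^f$ a.s.\ (bounded by $\|f\|_\infty$) by the definition of a.s.w.\ convergence, so the dominated convergence theorem yields the limit $\int Y\, Q^f\, \dd\P$. Combining these two limits finishes the proof.

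The main obstacle I anticipate is precisely the fact that $A$ is merely in $\cF$, not in $\cF_n$; one cannot simply apply the defining property of the conditional distribution $Q_n = \cL(X_n \mid \cF_n)$ to convert $\int_A f(X_n)\, \dd\P$ into $\int_A Q_n^f\, \dd\P$. Overcoming this requires exactly the $\cF_\infty$--reduction combined with the martingale approximation $\E[Y \mid \cF_n] \to Y$, and it is this step that genuinely uses the assumption that the $X_n$ are $\cF_\infty$--measurable (otherwise $f(X_n)$ could not be swapped with $\E[1_A \mid \cF_\infty] f(X_n)$ in the first reduction).
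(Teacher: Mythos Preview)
Your proof is correct and follows essentially the same strategy as the paper: both reduce from general $A\in\cF$ to $\cF_\infty$-measurable test functions (using that $f(X_n)$ and $Q^f$ are $\cF_\infty$-measurable), and both then exploit the filtration structure together with dominated convergence. The only organizational difference is that the paper first treats $A\in\cF_k$ directly (where $\int_A f(X_n)\,\dd\P=\int_A Q_n^f\,\dd\P$ for $n\ge k$) and then invokes a ``standard approximation argument'' to pass to all of $\cF_\infty$, whereas you handle arbitrary $\cF_\infty$-measurable $Y$ in one stroke via the decomposition $\int Y f(X_n)=\int \E[Y\mid\cF_n]\,Q_n^f+\int(Y-\E[Y\mid\cF_n])f(X_n)$ and L\'evy's upward theorem; this is a slightly more explicit and self-contained packaging of the same idea.
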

\begin{remark}\label{rem:prop:asw_weak}
In particular, $X_n$ converges in distribution to the probability measure $\E Q$ obtained by mixing the probability measures $Q(\omega)$ over $\P(\dd \omega)$. That is, for every Borel set $B\subset E$,
$$
(\E Q) (B) = \int_{\Omega} Q(\omega; B) \P(\dd\omega).
$$
\end{remark}
\begin{proof}[Proof of Proposition~\ref{prop:asw_weak}]
Let $f:E\to\R$ be a bounded continuous function. We will show that for every bounded $\cF$-measurable function $g:\Omega\to \R$,
\begin{equation}\label{eq:asw_stable_proof}
\lim_{n\to\infty} \int_{\Omega} f(X_n(\omega)) g(\omega) \P(\dd \omega)
=
\int_{\Omega} g(\omega) \left(\int_E f(z) Q(\omega; \dd z)\right) \P(\dd \omega).
\end{equation}
By taking $g=\ind_A$ in~\eqref{eq:asw_stable_proof} we obtain the required relation~\eqref{eq:stable_def}.

Let first $g=\ind_A$ for some $A\in \cF_k$, where $k\in \N$ is fixed. Because of the filtration property, $A\in \cF_n$ for all $n\geq k$.  Applying~\eqref{eq:cond_distr_def}  to the conditional law $Q_n=\cL(X_n|\cF_n)$, we obtain that for all $n\geq k$,
$$
\int_{A} f(X_n(\omega)) \P(\dd \omega) = \int_{A} \left(\int_{E} f(z)Q_{n} (\omega; \dd z)\right)\P(\dd \omega).
$$
For a.a.\ $\omega\in\Omega$ the probability measure $Q_n(\omega)$ converges weakly to $Q(\omega)$, and hence, the sequence $Q_n^f(\omega)=\int_{E}f(z) Q_n(\omega;\dd z)$ (which is bounded by $\|f\|_{\infty}$) converges as $n\to\infty$ to $Q^f(\omega)=\int_E f(z)Q(\omega; \dd z)$. By the dominated convergence theorem we immediately obtain~\eqref{eq:asw_stable_proof}.

A standard approximation argument  extends~\eqref{eq:asw_stable_proof} to all
$\cF_{\infty}$-measurable bounded functions $g:\Omega\to\R$.
Finally, let $g$ be $\cF$-measurable and bounded. In this case, one can  reduce~\eqref{eq:asw_stable_proof} to the case of $\cF_{\infty}$-measurable function $\tilde g=\E [g|\cF_{\infty}]$. Namely, since $X_n$ is $\cF_{\infty}$-measurable, we have
$$
\int_{\Omega} f(X_n(\omega)) g(\omega) \P(\dd \omega)
=
\int_{\Omega} f(X_n(\omega)) \tilde g(\omega) \P(\dd \omega),
$$
Similarly, since the $\cM_1(E)$-valued map $\omega\mapsto Q(\omega)$ is $\cF_{\infty}$-measurable (as an a.s.\ limit of $\cF_{\infty}$-measurable maps $\omega\mapsto Q_n(\omega)$),
$$
\int_{\Omega} g(\omega) \left(\int_E f(z) Q(\omega; \dd z)\right) \P(\dd \omega)
=
\int_{\Omega} \tilde g(\omega) \left(\int_E f(z) Q(\omega; \dd z)\right) \P(\dd \omega).
$$
So, it suffices to establish~\eqref{eq:asw_stable_proof} for the function $\tilde g$ instead of $g$, but this was already done above since $\tilde g$ is $\cF_{\infty}$-measurable and bounded.
\end{proof}
We will need the following variant of the martingale convergence theorem; see~\cite[p.~409, 10d]{loeve_book}.
An even more general result can be found in~\cite{landers_rogge}.
\begin{lemma}\label{lem:X_n_cond_F_n}
Let $\{\cF_n\}_{n\in\N}$ be a filtration on a probability space $(\Omega,\cF,\P)$. Write $\cF_{\infty} = \bigvee_{k\in\N} \cF_k$.
Let $\xi,\xi_1,\xi_2,\ldots$ be random variables defined on $(\Omega,\cF,\P)$
such that $\xi_n\to \xi$ a.s.\ and $|\xi_n|<M$ for some constant $M$. Then,
$$
\E [\xi_n | \cF_n] \toas \E[\xi|\cF_{\infty}].
$$
\end{lemma}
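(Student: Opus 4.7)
The plan is to combine L\'evy's upward martingale convergence theorem with a conditional dominated convergence argument, handling the ``moving target'' $\xi_n$ by replacing it with its tail supremum.

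First I would reduce to showing that $\E[\xi_n - \xi\,|\,\cF_n]\toas 0$. Indeed, by L\'evy's upward theorem applied to the bounded random variable $\xi$ (which is the a.s.\ limit of the uniformly bounded sequence $\xi_n$ and hence itself satisfies $|\xi|\le M$), we have $\E[\xi\,|\,\cF_n]\toas\E[\xi\,|\,\cF_\infty]$, so once the difference is controlled the lemma follows.

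Next I would introduce the tail supremum $\eta_m := \sup_{k\ge m}|\xi_k-\xi|$. This is a decreasing sequence of $\cF$-measurable random variables with $\eta_m\leq 2M$ and $\eta_m\toas 0$. For every $n\ge m$, the conditional Jensen inequality gives
\begin{equation*}
\bigl|\E[\xi_n-\xi\,|\,\cF_n]\bigr|\;\le\;\E\bigl[|\xi_n-\xi|\,\bigm|\,\cF_n\bigr]\;\le\;\E[\eta_m\,|\,\cF_n].
\end{equation*}
Keeping $m$ fixed and letting $n\to\infty$, L\'evy's upward theorem (applied to the bounded random variable $\eta_m$) yields $\E[\eta_m\,|\,\cF_n]\toas \E[\eta_m\,|\,\cF_\infty]$. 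Consequently
\begin{equation*}
\limsup_{n\to\infty}\bigl|\E[\xi_n-\xi\,|\,\cF_n]\bigr|\;\le\;\E[\eta_m\,|\,\cF_\infty]\quad\text{a.s.}
\end{equation*}

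Finally I would let $m\to\infty$. Since $\eta_m\downarrow 0$ a.s.\ with $\eta_m\le 2M$, the conditional dominated convergence theorem gives $\E[\eta_m\,|\,\cF_\infty]\toas 0$. Combining this with the previous display shows $\E[\xi_n-\xi\,|\,\cF_n]\toas 0$, which together with the L\'evy convergence of $\E[\xi\,|\,\cF_n]$ completes the proof.

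The only subtle point is the interchange of $n\to\infty$ (with $m$ fixed) and then $m\to\infty$: the first limit relies on L\'evy's theorem and gives an a.s.\ bound holding outside a null set depending on $m$, while the second limit requires a single null set. This is handled by taking the countable union of the exceptional null sets over $m\in\bN$, which is the only place routine care is needed; no integrability issue arises because everything in sight is bounded by $2M$.
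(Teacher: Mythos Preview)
Your proof is correct. The reduction via L\'evy's upward theorem plus the tail-supremum bound $\eta_m=\sup_{k\ge m}|\xi_k-\xi|$ is the standard way to handle the moving conditioning $\sigma$-algebra, and your handling of the countable union of null sets is fine.

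Note, however, that the paper does not actually prove this lemma: it simply cites it as a known variant of the martingale convergence theorem (Lo\`eve's book, with a more general version attributed to Landers--Rogge). So rather than differing from the paper's argument, your proposal supplies a self-contained proof where the paper relies on an external reference. The argument you give is essentially the classical one found in those sources.
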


\begin{proposition}\label{prop:asw_properties}
Let $\{\cF_n\}_{n\in\N}$ be a filtration on a probability space $(\Omega,\cF,\P)$.
Let $X_n,Y_n$, $n\in\N$, be complex-valued random variables defined on $(\Omega,\cF,\P)$. Suppose that for some kernel $Q:\Omega\to \cM_1(\R)$,
\begin{equation}\label{eq:prop:asw_properties}
\cL(X_n|\cF_n)\toasw Q.
\end{equation}
\begin{enumerate}
\item[(a)]  If  $Y_n\to 0$ a.s., then $\cL(X_n+Y_n|\cF_n)$  converges to $Q$ a.s.w.
\item[(b)]  If  $Y_n\to 1$ a.s., then $\cL(X_nY_n|\cF_n)$  converges to $Q$ a.s.w.
\end{enumerate}
\end{proposition}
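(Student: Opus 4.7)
The plan is to reduce a.s.w.\ convergence to pointwise a.s.\ convergence of conditional expectations against a countable family $\mathcal H$ of bounded Lipschitz test functions $f:\C\to\R$ that is convergence-determining for weak convergence on $\C$. Since a countable intersection of a.s.-sets has full measure, once I verify $\E[f(\tilde X_n)\mid\cF_n] \toas \int f\,dQ$ for $\tilde X_n=X_n+Y_n$ (resp.\ $\tilde X_n=X_nY_n$) and for each $f\in\mathcal H$, I conclude $\cL(\tilde X_n\mid\cF_n)\toasw Q$. The hypothesis~\eqref{eq:prop:asw_properties} supplies $\E[f(X_n)\mid\cF_n]\toas\int f\,dQ$, so the task reduces to showing
\begin{equation*}
\E\bigl[f(\tilde X_n)-f(X_n)\,\big|\,\cF_n\bigr]\toas 0 \quad\text{for every } f\in\mathcal H,
\end{equation*}
and the key tool will be Lemma~\ref{lem:X_n_cond_F_n}.

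For part (a), fix $f\in\mathcal H$ with Lipschitz constant $L$ and set $\xi_n=f(X_n+Y_n)-f(X_n)$. Then $|\xi_n|\leq \min(L|Y_n|,\,2\|f\|_\infty)\leq 2\|f\|_\infty$ and $\xi_n\to 0$ a.s.\ since $Y_n\to 0$ a.s. Lemma~\ref{lem:X_n_cond_F_n} immediately yields $\E[\xi_n\mid\cF_n]\toas 0$, finishing this case.

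For part (b) the subtlety is that $X_nY_n-X_n=X_n(Y_n-1)$ need not vanish a.s.: even though the conditional law of $X_n$ is tight for a.e.\ $\omega$, $X_n(\omega)$ itself may be unbounded along the chosen sample path. I get around this by truncating at level $R>0$ and using the bound
\begin{equation*}
|f(X_nY_n)-f(X_n)|\;\leq\;\min\bigl(LR\,|Y_n-1|,\,2\|f\|_\infty\bigr)\;+\;2\|f\|_\infty\cdot\ind_{\{|X_n|>R\}}.
\end{equation*}
The first summand is uniformly bounded and tends to $0$ a.s.\ (since $Y_n\to 1$), so by Lemma~\ref{lem:X_n_cond_F_n} its conditional expectation given $\cF_n$ vanishes a.s. For the second, the Portmanteau theorem applied $\omega$-wise to the a.s.w.\ hypothesis gives
\begin{equation*}
\limsup_{n\to\infty}\P\bigl\{|X_n|>R\,\big|\,\cF_n\bigr\}(\omega)\;\leq\;Q(\omega)\bigl(\{|x|\geq R\}\bigr)\quad\text{for a.e.\ }\omega.
\end{equation*}
Taking $R\to\infty$ along a countable sequence (and intersecting the a.s.-sets), the right-hand side tends to $0$ since $Q(\omega)$ is a probability measure, so $\limsup_n|\E[f(X_nY_n)-f(X_n)\mid\cF_n]|=0$ a.s. Assembling the pieces over $f\in\mathcal H$ gives (b). The main obstacle is precisely this multiplicative step, where the $\omega$-dependent tightness extracted from the a.s.w.\ hypothesis is indispensable.
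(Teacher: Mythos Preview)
Your proof of part~(a) is essentially the paper's: both pick a countable convergence-determining family of bounded uniformly continuous (or Lipschitz) test functions, set $\xi_n=f(X_n+Y_n)-f(X_n)$, observe that $\xi_n\to 0$ a.s.\ and is uniformly bounded, and invoke Lemma~\ref{lem:X_n_cond_F_n} to conclude $\E[\xi_n\mid\cF_n]\toas 0$.

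For part~(b) your route genuinely differs from the paper's. The paper dispatches (b) in one line by writing $X_nY_n=X_n+X_n(Y_n-1)$ and asserting that $Y_n':=X_n(Y_n-1)\to 0$ a.s., thereby reducing to~(a). You rightly flag this as the delicate point: the a.s.w.\ hypothesis constrains the conditional \emph{laws} of $X_n$ but not the pathwise size of $X_n(\omega)$. For instance, with $\cF_n$ trivial, $X_n$ i.i.d.\ standard normal and $Y_n=1+(\log n)^{-1/2}$, the hypothesis~\eqref{eq:prop:asw_properties} holds with $Q\equiv\Normal_{0,1}$, yet $X_n(Y_n-1)=X_n/\sqrt{\log n}$ has $\limsup_n|X_n|/\sqrt{\log n}=\sqrt 2$ a.s.\ and does not tend to $0$. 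Your truncation argument circumvents this: on $\{|X_n|\le R\}$ the increment $|f(X_nY_n)-f(X_n)|$ is controlled by $LR|Y_n-1|\wedge 2\|f\|_\infty$, which is bounded and vanishes a.s., so Lemma~\ref{lem:X_n_cond_F_n} applies; the remaining piece $2\|f\|_\infty\,\P(|X_n|>R\mid\cF_n)$ is handled $\omega$-wise via Portmanteau and the tightness of $Q(\omega)$, letting $R\to\infty$ along integers. This is sound and, unlike the paper's shortcut, covers the proposition in its stated generality.
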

\begin{remark}
Note that we do not assume $Y_n$ to be $\cF_n$-measurable. With this assumption, the proposition would become trivial.
\end{remark}
\begin{proof}[Proof of part (a)]
We can find a sequence of uniformly continuous, bounded functions $f_1,f_2,\ldots:\R\to\R$ with the property that a sequence of probability measures $\mu_1,\mu_2,\ldots$ converges weakly on $\R$ to a probability measure $\mu$ if and only if  for every $i\in\N$,
$$
\lim_{n\to\infty} \int_{\R} f_i \dd \mu_n = \int_{\R} f_i \dd \mu.
$$
Fix some $i\in\N$. We know from~\eqref{eq:prop:asw_properties} that
\begin{equation}\label{eq:tech1}
\E [f_i (X_n)| \cF_n] \toas Q^{f_i},
\end{equation}
where $Q^{f_i}$ denotes the random variable $\omega\mapsto \int_{\R} f_i(z) Q(\omega; \dd z)$. Since $f_i$ is uniformly continuous and $Y_n\to 0$ a.s.,\ we have
$$
\xi_n := f_i(X_n+Y_n)- f_i(X_n) \toas 0.
$$
Also, $|\xi_n|\leq 2 \|f_i\|_{\infty}$. By Lemma~\ref{lem:X_n_cond_F_n} with $\xi=0$, we have $\E [\xi_n|\cF_n] \to 0$ a.s. and hence, recalling~\eqref{eq:tech1},
$$
\E [f_i (X_n+Y_n)| \cF_n] \toas Q^{f_i}.
$$
This holds for every $i\in\N$. Hence,  $\cL(X_n+Y_n|\cF_n)$ converges a.s.w.\ to $Q$.

\vspace*{2mm}
\noindent
\textit{Proof of part (b)}. Part (b) can be reduced to part (a) by noting that $X_nY_n = X_n+ X_n(Y_n-1)$ and $Y_n':=X_n(Y_n-1)$ converges a.s.\ to $0$.
\end{proof}

The following result shows that a.s.w.\ convergence of conditional laws is preserved under filtration coarsening.
\begin{proposition}\label{prop:asw_total_expectation}
Let $\{\cF_n\}_{n\in\N}$ be a filtration on a probability space $(\Omega,\cF,\P)$.
Let $\xi_1,\xi_2,\ldots$ be random variables defined on $(\Omega,\cF,\P)$ and taking values in a Polish space $E$. 
Suppose that the sequence of conditional laws $Q_n:= \cL(\xi_n|\cF_n)$ converges as $n\to\infty$ to the kernel $Q$ in the a.s.w.\ sense. Let $\{\tilde \cF_n\}_{n\in\N}$ be another filtration on $(\Omega,\cF,\P)$ such that $\tilde \cF_n\subset \cF_n$ and let $\tilde \cF_{\infty}=\bigvee_{n=1}^{\infty} \tilde \cF_n$. Then,
$$
\tilde Q_n := \cL(\xi_n | \tilde \cF_n) \toasw \E [Q | \tilde \cF_{\infty}].
$$
\end{proposition}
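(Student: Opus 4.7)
The plan is to reduce the proposition to a scalar statement about conditional expectations via a countable convergence-determining family, and then apply Lemma~\ref{lem:X_n_cond_F_n}.

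\textbf{Step 1 (test functions).} Since $E$ is Polish, the space $\cM_1(E)$ is Polish, and weak convergence in $\cM_1(E)$ is metrized in such a way that one can fix a countable family $\{f_i\}_{i\in\N}$ of bounded continuous functions $f_i:E\to\R$ that is \emph{convergence-determining}: a sequence $\mu_n$ of probability measures converges weakly to $\mu$ iff $\int f_i\,\dd\mu_n\to\int f_i\,\dd\mu$ for every $i$. It therefore suffices to prove that for each fixed $i\in\N$,
\begin{equation}\label{eq:goal_asw_total}
\tilde Q_n^{f_i} \toas \bigl(\E[Q\mid \tilde \cF_{\infty}]\bigr)^{f_i},
\end{equation}
and then take the union of countably many exceptional null sets.

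\textbf{Step 2 (tower property).} Fix $i$ and write $g_n:=Q_n^{f_i}=\E[f_i(\xi_n)\mid \cF_n]$. Because $\tilde \cF_n\subset \cF_n$, the tower property gives
\begin{equation*}
\tilde Q_n^{f_i} \;=\; \E[f_i(\xi_n)\mid \tilde \cF_n] \;=\; \E\bigl[\E[f_i(\xi_n)\mid \cF_n]\,\big|\,\tilde \cF_n\bigr] \;=\; \E[g_n\mid \tilde \cF_n].
\end{equation*}

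\textbf{Step 3 (apply Lemma~\ref{lem:X_n_cond_F_n}).} By the a.s.w.\ hypothesis, $g_n=Q_n^{f_i}\toas Q^{f_i}$, and obviously $|g_n|\leq \|f_i\|_{\infty}$. Applying Lemma~\ref{lem:X_n_cond_F_n} to the sequence $g_n$ along the filtration $\{\tilde \cF_n\}_{n\in\N}$ yields
\begin{equation*}
\E[g_n\mid \tilde \cF_n] \toas \E[Q^{f_i}\mid \tilde \cF_{\infty}].
\end{equation*}

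\textbf{Step 4 (identify the limit kernel).} It remains to identify $\E[Q^{f_i}\mid \tilde \cF_{\infty}]$ with $(\E[Q\mid \tilde \cF_{\infty}])^{f_i}$. Since $E$ is Polish, the regular conditional distribution of any random element given $\tilde \cF_{\infty}$ exists; applied to the kernel $Q$, this produces a $\tilde \cF_{\infty}$-measurable kernel $\E[Q\mid \tilde \cF_{\infty}]$ characterised by $(\E[Q\mid \tilde \cF_{\infty}])^{f} = \E[Q^{f}\mid \tilde \cF_{\infty}]$ a.s.\ for every bounded Borel $f$ (apply the monotone class theorem starting from indicators $f=\ind_B$, $B\in\cE$). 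Combining this with Step~3 proves~\eqref{eq:goal_asw_total}, and hence the proposition.

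\textbf{Main obstacle.} The only non-trivial point is Step~4, i.e.\ making sense of $\E[Q\mid \tilde \cF_{\infty}]$ as a measurable kernel rather than as a collection of conditional expectations of real-valued quantities $Q(\,\cdot\,;B)$ for $B\in\cE$. This requires a regular version argument exploiting that $E$ is Polish; once this is granted, the rest of the proof is bookkeeping of tower property plus the bounded-dominated martingale-type statement of Lemma~\ref{lem:X_n_cond_F_n}.
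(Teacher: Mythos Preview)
Your proof is correct and follows essentially the same route as the paper: pick a countable convergence-determining family $\{f_i\}$, use the tower property to write $\tilde Q_n^{f_i}=\E[Q_n^{f_i}\mid\tilde\cF_n]$, and then apply Lemma~\ref{lem:X_n_cond_F_n}. The only difference is that your Step~4 spells out the existence and characterisation of $\E[Q\mid\tilde\cF_\infty]$ as a genuine kernel, whereas the paper takes this for granted.
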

\begin{proof}
Let $f_1,f_2,\ldots:E\to \R$ be bounded continuous functions such that a sequence of probability measures $\mu_1,\mu_2,\ldots$ on $E$ converges weakly to $\mu$ if and only if $\int_E f_i \dd \mu_n$ converges to $\int_E f_i \dd \mu$ as $n\to\infty$, for all $i\in\N$. Let $Q_n^{f_i}:\Omega\to\R$ be the function $\omega\mapsto \int_E f_i(z) Q_n(\omega;\dd z)$ and define $\tilde Q_n^{f_i}$ similarly. Then, $Q_n\to Q$ a.s.w.\ means that $Q_n^{f_i}\to Q^{f_i}$ a.s.,\ for all $i\in\N$.  Using the definition of conditional distributions, it is easy to check that $\tilde Q_n^{f_i}=\E[Q_n^{f_i}|\tilde \cF_n]$. By Lemma~\ref{lem:X_n_cond_F_n}, we have
$$
\tilde Q_n^{f_i} = \E[Q_n^{f_i}|\tilde \cF_n]\toas \E[Q^{f_i}|\tilde \cF_{\infty}].
$$
Since this holds for every $i\in\N$ we obtain that $\tilde Q_n \to  \E [Q | \tilde \cF_{\infty}]$ a.s.w.
\end{proof}


\section{Conditional Functional Central Limit Theorem and  applications to random trees}\label{sec:FCLT_strong}
\subsection{Statement of the conditional FCLT}\label{subsec:FCLT_statement_strong}
We are almost ready to state a stronger version of Theorem~\ref{theo:FCLT}. Consider a branching random walk in discrete or continuous time defined on a probability space $(\Omega,\cF,\P)$ and satisfying the assumptions of Section~\ref{subsec:biggins_martingale}.
Denote by $\cF_t=\sigma\{\pi_j\colon 0\leq j\leq t\}$ the $\sigma$-algebra generated by the BRW up to time $t\in\N_0$ (discrete-time case) or $t\geq 0$ (continuous-time case). For our applications to the analysis of algorithms we need to state a functional CLT valid over an arbitrary increasing sequence of stopping times.
Let $0\leq T_1\leq T_2\leq \ldots$ be a monotone increasing sequence of  stopping times w.r.t.\ the filtration $\{\cF_t\}$ such that a.s.,\
\begin{equation}\label{eq:lim_T_n_infty}
\lim_{n\to\infty} T_n=+\infty.
\end{equation}
In the discrete-time case we assume additionally that $T_n$ takes values in $\N_0$.
Two special cases (which make sense both for discrete and continuous time) will be of interest to us:
\begin{enumerate}
\item $T_n=n$. 
\item $T_n$ is the time at which the $n$-th particle is born.
\end{enumerate}
The second special case will be needed for the above-mentioned applications. Let $\cF_{T_n}$ be the $\sigma$-algebra generated by the branching random walk up to the stopping time $T_n$.

Fix $R>0$. Consider the following random analytic function on the disk $\overline \bD_R$:
\begin{equation}\label{eq:D_n_u_def2}
D_{T_n}(u) = m^{\frac 12 T_n} \left(W_{\infty}\left(\frac{u}{\sqrt{T_n}}\right) -  W_{T_n}\left(\frac{u}{\sqrt{T_n}}\right)\right).
\end{equation}
We will prove that the conditional distribution of $D_{T_n}$ under $\cF_{T_n}$ converges to some limiting kernel $Q_{\infty}:\Omega\to \cM_1(\bA_R)$, in the a.s.w.\ sense.  To describe the limiting kernel $Q_{\infty}$, we use the random variable $N_\infty$ from~\eqref{eq:N_infty_def} (defined on the same probability space as the branching random walk) and the random analytic function $\xi$ described in Section~\ref{subsec:GAF} ($\xi$ may be defined on a different probability space).  For $\omega\in \Omega$ we define $Q_\infty(\omega)$ to be the distribution (on $\bA_R$) of the random analytic function
$$
\Xi(\,\cdot\,;\omega): \overline\bD_R\to\bC, \quad  u\,\mapsto\, \sigma \sqrt{N_\infty(\omega)}\,  \xi\bigl(\tau u\bigr),
\quad u\in \overline \bD_R,
$$
where we recall that $\sigma^2=\Var N_{\infty}$ and $\tau^2=\varphi''(0)$. Note that the dependence of $\Xi$ on its arguments factorizes.

The following is our main result.
\begin{theorem}\label{thm:mainlim}
As $n\to\infty$, the conditional distribution $Q_n:=\cL(D_{T_n}|\cF_{T_n})$ converges to the kernel $Q_\infty$ defined above, almost surely and with respect to weak convergence:
\begin{equation}\label{eq:thm:mainlim}
\cL\left(D_{T_n}(\cdot) \Big |\cF_{T_n}\right)
\toasw
\left\{\omega \mapsto \cL\left(\Xi(\cdot;\omega)\right)\right\}.
\end{equation}
\end{theorem}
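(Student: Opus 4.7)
My plan is to exploit the branching Markov property at the stopping time $T_n$. Conditionally on $\cF_{T_n}$, the $N_{T_n}$ subtrees rooted at the particles at positions $z_{j,T_n}$ ($j=1,\ldots, N_{T_n}$) are mutually independent BRWs, shifted to start at those positions. Writing $W_\infty^{(j)}$ for the Biggins limit of the $j$-th subtree, the definition~\eqref{eq:biggins_martingale_def} yields
\begin{equation*}
D_{T_n}(u) \;=\; \sum_{j=1}^{N_{T_n}} M_{j,n}(u)\bigl(W_\infty^{(j)}(u/\sqrt{T_n}) - 1\bigr), \qquad
M_{j,n}(u) = \frac{m^{T_n/2}\,\eee^{u z_{j,T_n}/\sqrt{T_n}}}{m(u/\sqrt{T_n})^{T_n}},
\end{equation*}
where the functions $W_\infty^{(j)} - 1$ are i.i.d.\ copies of $W_\infty - 1$, independent of $\cF_{T_n}$, and centred. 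So, conditionally on $\cF_{T_n}$, $D_{T_n}$ is a centred sum of independent random analytic functions with $\cF_{T_n}$-measurable weights.

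Computing conditional second moments via independence of distinct subtrees,
\begin{equation*}
\E\bigl[D_{T_n}(u) D_{T_n}(v)\bigm|\cF_{T_n}\bigr]
\;=\;
\kappa_n(u,v)\, \frac{m^{T_n}\, m((u+v)/\sqrt{T_n})^{T_n}}{m(u/\sqrt{T_n})^{T_n}\, m(v/\sqrt{T_n})^{T_n}}\,
W_{T_n}\!\bigl((u+v)/\sqrt{T_n}\bigr),
\end{equation*}
with $\kappa_n(u,v) := \E[(W_\infty(u/\sqrt{T_n})-1)(W_\infty(v/\sqrt{T_n})-1)] \to \sigma^2$. A second-order Taylor expansion of $\varphi = \log m$ at $0$ identifies the limit of the ratio of $m$-factors as $\eee^{\tau^2 uv}$, while $W_{T_n}((u+v)/\sqrt{T_n}) \to W_\infty(0) = N_\infty$ almost surely by~\eqref{eq:biggins_martingale_converges}. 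The analogous computation for $\E[D_{T_n}(u)\overline{D_{T_n}(v)}\mid\cF_{T_n}]$ gives $\sigma^2 N_\infty \eee^{\tau^2 u\bar v}$, matching exactly the covariance and pseudo-covariance of the target process $\sigma\sqrt{N_\infty}\,\xi(\tau\,\cdot\,)$.

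For a.s.\ convergence of finite-dimensional marginals I apply, off a single $\P$-null set, a conditional Lyapunov CLT with exponent $p_0 > 2$ to the triangular array of summands evaluated at a countable dense subset of $\overline\bD_R$. The Lyapunov sum splits into the deterministic factor $\E[|W_\infty(\beta) - 1|^{p_0}]$ (uniformly bounded in $|\beta| \leq \delta_0$ by Biggins--Uchiyama) times the $\cF_{T_n}$-measurable sum $\sum_j |M_{j,n}(u)|^{p_0}$; writing the latter as $W_{T_n}(p_0\Re(u)/\sqrt{T_n})$ times a deterministic prefactor and Taylor-expanding $\varphi$ shows the whole expression is asymptotic to $m^{(1-p_0/2)T_n}\cdot e^{O(1)}\cdot N_\infty$, which vanishes a.s.\ because $p_0 > 2$ and $m > 1$. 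To upgrade from finite-dimensional convergence to weak convergence on $\bA_R$ I establish a.s.\ conditional tightness: picking $R' > R$ with $R'/\sqrt{T_n} \leq \delta_0$ for all large $n$, a Rosenthal inequality applied to the conditionally independent sum bounds $\E[\sup_{|u|\leq R'}|D_{T_n}(u)|^{p_0} \mid \cF_{T_n}]$ by a function of Biggins-martingale values near $0$ that is a.s.\ convergent, and the Cauchy integral formula plus Montel's theorem then yield relative compactness in $\bA_R$.

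The main obstacle is aligning all these almost-sure statements on a single $\P$-null set. This is managed by restricting the covariance convergence and Lyapunov CLT to a fixed countable dense subset $\cV \subset \overline\bD_R$ (so that only countably many null sets arise), and then using the a.s.\ tightness to extend the finite-dimensional limit process from $\cV$ to a random analytic function on all of $\overline\bD_R$ whose law is uniquely determined by the covariance and pseudo-covariance kernels. Combining these ingredients, every a.s.\ subsequential weak limit of $Q_n(\omega)$ is a centred Gaussian random analytic function on $\bA_R$ with the prescribed kernels, and hence coincides with $\cL\bigl(\sigma\sqrt{N_\infty(\omega)}\,\xi(\tau\,\cdot\,)\bigr)$, yielding~\eqref{eq:thm:mainlim}.
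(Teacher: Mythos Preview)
Your proposal follows essentially the same route as the paper: the branching decomposition at $T_n$ into conditionally independent summands with weights $M_{j,n}=a_{j,n}$, the covariance computation via Taylor expansion of $\varphi$, the Lyapunov CLT for finite-dimensional marginals, and tightness via uniform moment bounds on a larger disk combined with a Montel-type compactness argument.

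Two minor points of imprecision are worth noting. First, in the tightness step you invoke Rosenthal to bound $\E\bigl[\sup_{|u|\le R'}|D_{T_n}(u)|^{p_0}\mid\cF_{T_n}\bigr]$ directly; Rosenthal's inequality controls sums at a fixed $u$, not suprema, so the cleaner route (and the one the paper takes) is to bound $\E\bigl[|D_{T_n}(u)|^2\mid\cF_{T_n}\bigr]$ pointwise and uniformly over $u\in\bD_{2R}$ using only additivity of conditional variance, then apply a Shirai/Montel argument. Second, your device of restricting to a countable dense subset $\cV$ to manage null sets is unnecessary: once you fix $\omega$ in the almost-sure set $\Omega_0$ where $T_n\to\infty$ and the Biggins martingale converges uniformly, the weights $M_{j,n}(u;\omega)$ are deterministic and the Lyapunov CLT is applied on the auxiliary space carrying the i.i.d.\ copies $W_\infty^{(j)}$, so f.d.d.\ convergence holds for \emph{all} $u_1,\ldots,u_d$ simultaneously without accumulating null sets.
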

Recalling Proposition~\ref{prop:asw_weak} and Remark~\ref{rem:prop:asw_weak}, we obtain the following
\begin{corollary}\label{cor:mainlim}
The following convergence of random analytic functions holds weakly  on $\bA_R$ for every $R>0$:
\begin{equation*}
\left\{D_{T_n}(u)\colon u\in \overline \bD_{R}\right\}
\toweak
\left\{\sigma \sqrt {N_{\infty}}\, \xi\bigl(\tau u\bigr)\colon u\in \overline \bD_{R}\right\},
\end{equation*}
where $N_{\infty}$ and $\xi$ are independent.
\end{corollary}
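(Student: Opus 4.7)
The corollary is a direct consequence of Theorem~\ref{thm:mainlim} combined with Proposition~\ref{prop:asw_weak} and Remark~\ref{rem:prop:asw_weak}, so the plan is essentially to verify the hypotheses of that proposition and then identify the mixed limit kernel as the claimed unconditional law. Concretely, I set $X_n := D_{T_n}$, viewed as a random element of the Polish space $\bA_R$, and I take the filtration to be $\{\cF_{T_n}\}_{n\in\N}$, with $\cF_\infty := \bigvee_{n} \cF_{T_n}$.

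\textbf{Step 1: measurability.} I first check that each $D_{T_n}$ is $\cF_\infty$-measurable. The random analytic function $W_{T_n}$ is $\cF_{T_n}$-measurable by its very definition \eqref{eq:biggins_martingale_def}, hence $\cF_\infty$-measurable. For $W_\infty$, the uniform convergence \eqref{eq:biggins_martingale_converges} identifies $W_\infty$ on $\overline\bD_{\delta_0}$ as the $\bA_{\delta_0}$-valued a.s.\ limit of $W_k$, $k\in\N$; since $T_n\to\infty$ a.s.\ by \eqref{eq:lim_T_n_infty}, the $\sigma$-algebra $\cF_\infty$ contains $\cF_k$ for every $k\in\N$ (up to $\P$-null sets), so $W_\infty$ is $\cF_\infty$-measurable as well. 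Since $D_{T_n}$ is obtained from $W_\infty$ and $W_{T_n}$ by Borel operations on $\bA_R$ (composition with the scalar rescaling $u\mapsto u/\sqrt{T_n}$, with $T_n$ itself $\cF_\infty$-measurable), we conclude that $D_{T_n}$ is $\cF_\infty$-measurable.

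\textbf{Step 2: applying a.s.w.\ $\Rightarrow$ weak.} Theorem~\ref{thm:mainlim} asserts that $\cL(D_{T_n}\mid \cF_{T_n}) \toasw Q_\infty$. With Step~1 in place, Proposition~\ref{prop:asw_weak} applies and yields stable convergence of the sequence $D_{T_n}$ to the kernel $Q_\infty$; in particular, by Remark~\ref{rem:prop:asw_weak}, the unconditional distribution of $D_{T_n}$ converges weakly on $\bA_R$ to the mixture
\[
\mu := \E Q_\infty, \qquad \mu(B)=\int_\Omega Q_\infty(\omega;B)\,\P(\dd\omega),\ B\in\cB(\bA_R).
\]

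\textbf{Step 3: identifying the limit.} It remains to show that $\mu$ is precisely the distribution of $\sigma\sqrt{N_\infty}\,\xi(\tau\,\cdot)$ with $N_\infty$ and $\xi$ independent. By the definition of $Q_\infty$ preceding Theorem~\ref{thm:mainlim}, $Q_\infty(\omega)$ is the law of $u\mapsto \sigma\sqrt{N_\infty(\omega)}\,\xi(\tau u)$ taken with $\xi$ living on an auxiliary probability space and $N_\infty(\omega)$ entering only as a deterministic positive scalar. Enlarging $(\Omega,\cF,\P)$ by an independent factor on which $\xi$ is defined (this does not affect any of the preceding statements), a bounded continuous $g:\bA_R\to\R$ satisfies
\[
\int_{\bA_R} g\,\dd\mu = \int_\Omega \E_\xi\!\left[ g\bigl(\sigma\sqrt{N_\infty(\omega)}\,\xi(\tau\,\cdot)\bigr)\right]\P(\dd\omega) = \E\!\left[g\bigl(\sigma\sqrt{N_\infty}\,\xi(\tau\,\cdot)\bigr)\right],
\]
where in the last step Fubini's theorem uses the product structure to produce independence of $N_\infty$ and $\xi$. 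This identifies $\mu$ as the law appearing on the right-hand side of the corollary, and concludes the proof.

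\textbf{Expected main obstacle.} There is no substantial obstacle beyond bookkeeping: the content is entirely contained in Theorem~\ref{thm:mainlim}. The only delicate points are the measurability verification in Step~1 (which hinges on the a.s.\ uniform convergence \eqref{eq:biggins_martingale_converges} together with $T_n\to\infty$) and the clean identification of the mixed kernel in Step~3 as the law of a product of two independent factors, which is just a matter of interpreting $\xi$ as living on an auxiliary probability space and invoking Fubini.
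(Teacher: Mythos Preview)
Your proof is correct and follows exactly the route the paper takes: the paper simply states that the corollary follows from Theorem~\ref{thm:mainlim} together with Proposition~\ref{prop:asw_weak} and Remark~\ref{rem:prop:asw_weak}, without writing out any details. Your Steps~1--3 merely spell out the measurability check, the application of the a.s.w.\ $\Rightarrow$ stable $\Rightarrow$ weak implication, and the identification of the mixed kernel that the paper leaves implicit.
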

The proof of Theorem~\ref{thm:mainlim} will be given in Section~\ref{sec:proof_FCLT}.

\begin{remark}
The function $D_{T_n}(u)$ may not be defined on the event  $A_n := \{R/\sqrt{T_n} > \delta_0\}$.
Since we do not assume that $T_n\to\infty$ uniformly, it is possible that the probability of $A_n$ is strictly positive for every $n\in\N$. On the other hand, we have $\ind_{A_n}\to 0$ a.s.\ since $T_n\to \infty$ a.s. Hence, on the event $A_n$ we can define $D_{T_n}(u)$ in an arbitrary way  (say, as $0$) and by Proposition~\ref{prop:asw_properties}, part~(a), this does not affect Theorem~\ref{thm:mainlim} and Corollary~\ref{cor:mainlim}.
\end{remark}
\begin{remark}
Theorems~\ref{theo:FCLT} and~\ref{thm:mainlim} deal with the behavior of $W_n(\beta)$ in a small neighborhood of $0$. It is possible to obtain analogues of these results in a neighborhood of an arbitrary real $\beta_*$ from an appropriate interval; however, for our applications we need only the case $\beta_*=0$.
\end{remark}


\subsection{CLT for Galton--Watson processes}
In this section we show how Theorem~\ref{thm:mainlim} can be used to rederive and generalize the classical CLT for Galton--Watson processes due to~\citet{athreya} and~\citet{heyde}.  Consider a Galton--Watson process $N_n$ starting at time $0$ with one particle. Suppose that $N_1$ has mean $m>1$, variance $\sigma^2>0$ and finite $p_0$-th moment, for some $p_0>2$. Let $\P[N_1=0]=0$ (otherwise, we have to restrict everything to the survival event).  The limit
\begin{equation}\label{eq:N_infty_def_1}
N_{\infty}:=\lim_{n\to\infty}\frac{N_n}{m^n}>0
\end{equation}
exists a.s. By considering a branching random walk in which the particles split according to $N_n$ while not moving away from $0$, we can identify $N_n/m^n$ with $W_n(\beta)$, for every $\beta\in\C$. In this setting, Theorem~\ref{thm:mainlim} takes the form
\begin{theorem}\label{theo:heyde_CLT_very_general}
For every sequence $(T_n)_{n\in\bN}$ of stopping times with $T_n \uparrow\infty$ a.s.\ as $n\to\infty$ we have
\begin{equation}\label{eq:theo:heyde_CLT_very_general}
\cL\left( \sqrt{m^{T_n}} \left(N_{\infty} - \frac{N_{T_n}}{m^{T_n}}\right)\Bigg |\cF_{T_n}\right) \toasw
\left\{\omega \mapsto \Normal_{0, \sigma^2 N_{\infty}(\omega)}\right\}.
\end{equation}
\end{theorem}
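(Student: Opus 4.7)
The plan is to realize $N_n$ as a trivially shifted branching random walk and read the claim off as a one-point specialization of Theorem~\ref{thm:mainlim}. Concretely, I view the Galton--Watson process as the discrete-time BRW whose cluster point measure is $\zeta=N_1\delta_0$, so that all particles sit at the origin and the underlying genealogy is the given GW tree. Then $\pi_n=N_n\delta_0$, and Assumption~A holds since the offspring distribution is a.s.\ non-zero (supercriticality) and non-degenerate (because $\sigma^2>0$ forces $\P[N_1=1]<1$). Assumption~B also holds since $\sum_{z\in\pi_1}\eee^{\beta z}=N_1$ and $\E N_1^{p_0}<\infty$ for some $p_0>2$ by hypothesis, uniformly in $\beta$. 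Moreover, the $\sigma$-algebra $\cF_{T_n}$ generated by the BRW through time $T_n$ coincides with the $\sigma$-algebra generated by $N_0,\ldots,N_{T_n}$.

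Now the key simplification: since every particle has position $0$, the Biggins martingale is $\beta$-free,
\begin{equation*}
W_n(\beta)=\frac{1}{m(\beta)^n}\sum_{z\in\pi_n}\eee^{\beta z}=\frac{N_n}{m^n}\qquad \text{for every } \beta\in\C,
\end{equation*}
so $m(\beta)\equiv m$, $\varphi(\beta)\equiv\log m$, and therefore $\tau^2=\varphi''(0)=0$. Consequently the random analytic function in~\eqref{eq:D_n_u_def2} is a constant function of $u$, namely
\begin{equation*}
D_{T_n}(u)=m^{T_n/2}\Bigl(N_\infty-\frac{N_{T_n}}{m^{T_n}}\Bigr), \qquad u\in\overline\bD_R,
\end{equation*}
and on the limit side, $\Xi(u;\omega)=\sigma\sqrt{N_\infty(\omega)}\,\xi(\tau u)=\sigma\sqrt{N_\infty(\omega)}\,\xi_0$ is likewise constant in $u$, where $\xi_0=\xi(0)\sim\Normal_{0,1}$ is independent of $N_\infty$. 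Hence $\Xi(\cdot;\omega)$ has the constant-function law $\Normal_{0,\sigma^2 N_\infty(\omega)}$ (transported to $\bA_R$).

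To conclude, I apply Theorem~\ref{thm:mainlim} in this degenerate BRW, obtaining a.s.w.\ convergence of $\cL(D_{T_n}\mid\cF_{T_n})$ on $\bA_R$ to the kernel $\omega\mapsto\cL(\Xi(\cdot;\omega))$. Composing with the evaluation $\mathrm{ev}_0:\bA_R\to\R$, $f\mapsto f(0)$, which is continuous, and invoking the continuous mapping theorem pointwise in $\omega$ (the push-forward of weak convergence under a continuous map is weak convergence), I transfer the a.s.w.\ statement on $\bA_R$ to an a.s.w.\ statement on $\R$, giving exactly~\eqref{eq:theo:heyde_CLT_very_general}. There is no real obstacle here: the only thing to check carefully is that the degenerate model satisfies the hypotheses of Theorem~\ref{thm:mainlim} (immediate) and that the limit $\xi(\tau u)$ collapses to $\xi(0)$ when $\tau=0$; the argument is essentially a one-point specialization of the functional limit theorem.
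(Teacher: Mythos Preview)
Your proposal is correct and follows essentially the same route as the paper: realize the Galton--Watson process as the degenerate BRW with $\zeta=N_1\delta_0$, so that $W_n(\beta)=N_n/m^n$ for all $\beta$, and then apply Theorem~\ref{thm:mainlim} together with the continuous evaluation map $f\mapsto f(0)$ on $\bA_R$, using $\xi(0)\sim\Normal_{0,1}$. The only minor remark is that Assumption~A (namely $\P[N_1=1]<1$) already follows from $m>1$ together with $\P[N_1=0]=0$, so you do not need to invoke $\sigma^2>0$ for this.
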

Indeed,  $f\mapsto f(0)$ is a continuous map from $\bA_R$ to $\C$. Observe also that $\xi(0)\sim \Normal_{0,1}$ by~\eqref{eq:xi_def}.
The continuous mapping theorem justifies taking $u=0$ in Theorem~\ref{thm:mainlim} and yields~\eqref{eq:theo:heyde_CLT_very_general}.

One may ask whether it is possible to move $N_{\infty}(\omega)$ from the right-hand side of~\eqref{eq:theo:heyde_CLT_very_general} to the left. This would have the advantage that the limiting distribution would be normal rather than a mixture of normals. The question is non-trivial because the random variable $N_{\infty}$ is not $\cF_{T_n}$-measurable. Nevertheless, the answer is positive:
\begin{theorem}\label{theo:heyde_CLT_very_general_1}
For every sequence $(T_n)_{n\in\bN}$ of stopping times with $T_n \uparrow\infty$ a.s.\ as $n\to\infty$ we have
\begin{equation}\label{eq:theo:heyde_CLT_very_general_1}
\cL\left( \sqrt{\frac{m^{T_n}}{N_{\infty}}} \left(N_{\infty} - \frac{N_{T_n}}{m^{T_n}}\right)\Bigg |\cF_{T_n}\right) \toasw
\left\{\omega \mapsto \Normal_{0, \sigma^2}\right\}.
\end{equation}
\end{theorem}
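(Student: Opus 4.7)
The plan is to reduce Theorem~\ref{theo:heyde_CLT_very_general_1} to Theorem~\ref{theo:heyde_CLT_very_general} by replacing the non-$\cF_{T_n}$-measurable random denominator $\sqrt{N_\infty}$ with an $\cF_{T_n}$-measurable proxy, carrying out the rescaling inside the conditional distribution, and then undoing the substitution via Proposition~\ref{prop:asw_properties}. Specifically, I set $\hat N_n := N_{T_n}/m^{T_n}$ (which is $\cF_{T_n}$-measurable and, by~\eqref{eq:N_infty_def_1}, tends a.s.\ to $N_\infty>0$) and $X_n := \sqrt{m^{T_n}}(N_\infty - \hat N_n)$. Theorem~\ref{theo:heyde_CLT_very_general} supplies $\cL(X_n \mid \cF_{T_n}) \toasw \{\omega \mapsto \Normal_{0,\sigma^2 N_\infty(\omega)}\}$.

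First I would replace $\sqrt{N_\infty}$ by $\sqrt{\hat N_n}$ in the denominator. Because $\hat N_n$ is $\cF_{T_n}$-measurable and a.s.\ strictly positive, for a.a.\ $\omega$ the conditional law $\cL(X_n/\sqrt{\hat N_n} \mid \cF_{T_n})(\omega)$ equals the pushforward of $\cL(X_n \mid \cF_{T_n})(\omega)$ under the deterministic scaling $x \mapsto x/\sqrt{\hat N_n(\omega)}$ (the standard ``freezing'' identity for conditional distributions). For such $\omega$ the input measure converges weakly to $\Normal_{0, \sigma^2 N_\infty(\omega)}$, while the scaling factor tends to $1/\sqrt{N_\infty(\omega)} > 0$. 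Since weak convergence on $\R$ is preserved under rescaling by a convergent positive deterministic factor (a one-line Slutsky argument: if $Z_n \to Z$ in distribution and $c_n \to c > 0$, then $c_n Z_n \to cZ$ in distribution), the pushforward converges weakly to $\Normal_{0, \sigma^2 N_\infty(\omega)/N_\infty(\omega)} = \Normal_{0,\sigma^2}$ for a.a.\ $\omega$. This yields
\begin{equation*}
\cL\bigl( X_n/\sqrt{\hat N_n} \,\big|\, \cF_{T_n} \bigr) \toasw \{\omega \mapsto \Normal_{0,\sigma^2}\},
\end{equation*}
now with a deterministic limiting kernel.

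To finish I would write
\begin{equation*}
\sqrt{\frac{m^{T_n}}{N_\infty}}\bigl( N_\infty - \hat N_n \bigr) = \frac{X_n}{\sqrt{\hat N_n}} \cdot Y_n, \qquad Y_n := \sqrt{\hat N_n/N_\infty},
\end{equation*}
and note $Y_n \to 1$ a.s. Proposition~\ref{prop:asw_properties}(b) then promotes the a.s.w.\ convergence established above for $X_n/\sqrt{\hat N_n}$ to the same convergence for the product, which is precisely~\eqref{eq:theo:heyde_CLT_very_general_1}. The main obstacle is the middle step: one has to verify carefully that passing to the conditional distribution commutes with scaling by an $\cF_{T_n}$-measurable factor, and that pointwise (in $\omega$) weak convergence of a sequence of measures combined with pointwise convergence of a positive scaling factor gives pointwise weak convergence of the rescaled measures; the rest is essentially bookkeeping around Proposition~\ref{prop:asw_properties}.
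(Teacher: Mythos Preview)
Your proof is correct and follows essentially the same route as the paper: both replace $\sqrt{N_\infty}$ by the $\cF_{T_n}$-measurable factor $\sqrt{N_{T_n}/m^{T_n}}$, apply Slutsky's lemma pointwise in $\omega$ to Theorem~\ref{theo:heyde_CLT_very_general} to obtain the deterministic limiting kernel $\Normal_{0,\sigma^2}$, and then correct the substitution via Proposition~\ref{prop:asw_properties}(b) with $Y_n=\sqrt{\hat N_n/N_\infty}\to 1$ a.s. Your write-up is somewhat more explicit about the ``freezing'' identity for conditional laws, but the argument is the same.
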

\begin{proof}
Note that by~\eqref{eq:N_infty_def_1} and~\eqref{eq:lim_T_n_infty},
\begin{equation}\label{eq:Y_n_tech1}
\sqrt{\frac{m^{T_n}}{N_{T_n}}} \toas \frac 1{\sqrt{N_{\infty}}}.
\end{equation}
The random variable on the right-hand side is $\cF_{T_n}$-measurable. Applying Slutsky's lemma pointwise to Theorem~\ref{theo:heyde_CLT_very_general} we obtain that
$$
\cL\left(\sqrt{\frac{m^{T_n}}{N_{T_n}}} \sqrt{m^{T_n}} \left(N_{\infty} - \frac{N_{T_n}}{m^{T_n}}\right)\Bigg |\cF_{T_n}\right) \toasw
\left\{\omega \mapsto \Normal_{0, \sigma^2}\right\}.
$$
By Proposition~\ref{prop:asw_properties}\,(b) we can multiply the random variable on the left-hand side by $Y_n:=\sqrt{N_{T_n}/(m^{T_n}N_{\infty})}$ because $Y_n$ converges to $1$ a.s.\ by~\eqref{eq:Y_n_tech1}.  This yields~\eqref{eq:theo:heyde_CLT_very_general_1}.
\end{proof}

By Proposition~\ref{prop:asw_weak} and Remark~\ref{rem:prop:asw_weak} we obtain the following corollary of Theorems~\ref{theo:heyde_CLT_very_general} and~\ref{theo:heyde_CLT_very_general_1}.
\begin{corollary}\label{cor:heyde_clt}
It holds that
\begin{align}
&\sqrt{m^{T_n}} \left(N_{\infty} - \frac{N_{T_n}}{m^{T_n}}\right) \todistr \Normal_{0, \sigma^2 N_{\infty}},\label{eq:heyde_clt1}\\
&\sqrt{\frac{m^{T_n}}{N_{\infty}}} \left(N_{\infty} - \frac{N_{T_n}}{m^{T_n}}\right) \tomix \Normal_{0, \sigma^2}.\label{eq:heyde_clt2}
\end{align}
\end{corollary}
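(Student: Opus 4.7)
The plan is to read off the corollary directly from the two preceding theorems via the general machinery of Section~\ref{sec:asw}. Both \eqref{eq:heyde_clt1} and \eqref{eq:heyde_clt2} should follow by applying Proposition~\ref{prop:asw_weak} (together with Remark~\ref{rem:prop:asw_weak}) to the a.s.w.\ convergence statements~\eqref{eq:theo:heyde_CLT_very_general} and~\eqref{eq:theo:heyde_CLT_very_general_1} respectively, so there is essentially no new computation to perform; the work is just verifying the hypotheses of Proposition~\ref{prop:asw_weak}.

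For~\eqref{eq:heyde_clt1}, set $X_n := \sqrt{m^{T_n}}\bigl(N_\infty - N_{T_n}/m^{T_n}\bigr)$ with the filtration $\{\cF_{T_n}\}$. I need $X_n$ to be $\cF_\infty$-measurable, where $\cF_\infty := \bigvee_k \cF_{T_k}$. Since $T_n\uparrow\infty$ a.s.\ and $N_\infty = \lim_{k\to\infty} N_k/m^k$, the variable $N_\infty$ is measurable with respect to $\bigvee_k \cF_k$; but $\{T_n\uparrow\infty\}$ forces $\bigvee_k \cF_{T_k} \supseteq \bigvee_k \cF_k$ (up to null sets), so $X_n$ is $\cF_\infty$-measurable. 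Theorem~\ref{theo:heyde_CLT_very_general} then supplies $\cL(X_n\mid \cF_{T_n}) \toasw \{\omega \mapsto \Normal_{0,\sigma^2 N_\infty(\omega)}\}=:Q$, and Proposition~\ref{prop:asw_weak} yields stable (hence distributional) convergence of $X_n$ to the kernel $Q$. By Remark~\ref{rem:prop:asw_weak}, this means $X_n \todistr \E Q$, and $\E Q$ is precisely the normal variance mixture $\Normal_{0,\sigma^2 N_\infty}$, which is~\eqref{eq:heyde_clt1}.

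For~\eqref{eq:heyde_clt2}, do the same with $Y_n := \sqrt{m^{T_n}/N_\infty}\bigl(N_\infty - N_{T_n}/m^{T_n}\bigr)$. The measurability check is identical (the factor $1/\sqrt{N_\infty}$ is $\cF_\infty$-measurable). Now Theorem~\ref{theo:heyde_CLT_very_general_1} gives $\cL(Y_n\mid \cF_{T_n}) \toasw \{\omega \mapsto \Normal_{0,\sigma^2}\}$, i.e.\ a.s.w.\ convergence to the \emph{constant} kernel $Q':\omega\mapsto \Normal_{0,\sigma^2}$. Applying Proposition~\ref{prop:asw_weak} gives stable convergence of $Y_n$ to $Q'$; but stable convergence to a constant kernel is by definition mixing convergence, so $Y_n \tomix \Normal_{0,\sigma^2}$, which is~\eqref{eq:heyde_clt2}.

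There is no genuine obstacle here; the only point that requires a moment's thought is the $\cF_\infty$-measurability of $X_n$ and $Y_n$, which uses $T_n\uparrow\infty$. Everything else is a direct quotation of the results established earlier. In particular, no Slutsky-type manipulation is needed at this stage, since the conversion of the normalizing factor from $\sqrt{m^{T_n}}$ to $\sqrt{m^{T_n}/N_\infty}$ has already been carried out inside the proof of Theorem~\ref{theo:heyde_CLT_very_general_1} by means of Proposition~\ref{prop:asw_properties}(b).
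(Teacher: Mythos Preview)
Your proposal is correct and matches the paper's own argument: the paper simply states that the corollary follows from Theorems~\ref{theo:heyde_CLT_very_general} and~\ref{theo:heyde_CLT_very_general_1} via Proposition~\ref{prop:asw_weak} and Remark~\ref{rem:prop:asw_weak}, without further detail. You have added the explicit check of $\cF_\infty$-measurability (using $T_n\uparrow\infty$ to see that $N_\infty=\lim_n N_{T_n}/m^{T_n}$ a.s.\ is $\bigvee_n\cF_{T_n}$-measurable) and spelled out that stable convergence to a constant kernel is mixing convergence, both of which the paper leaves implicit.
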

Taking $T_n=n$ we recover the original CLT for Galton--Watson processes; see~\eqref{eq:heyde_clt}.
Note that we need the condition $\E N_1^{p_0}<\infty$ for some $p_0>2$ (which is slightly stronger than the condition $\E N_1^2<\infty$ needed in the CLT for Galton--Watson processes).  This is due to the fact for general $T_n$'s we need to use Lyapunov's CLT in the proof of Theorem~\ref{thm:mainlim}.




\subsection{Sum of the particle positions in the BRW: Martingale convergence}
In this and the next section we will be interested in the sum of the positions of the particles in a branching random walk at time $n$:
\begin{equation}\label{eq:def_S_n}
S_n 
= \sum_{j=1}^{N_n} z_{j,n}.
\end{equation}
Let $d = \varphi'(0)$.  The sum $S_n$ is related to the first derivative $W_n'(0)$ via
\begin{equation}
L_n := W_n'(0) = \frac{S_n - dnN_n}{m^n}.
\end{equation}
>From the martingale property of $W_n(\beta)$ it follows that $L_n=W'_n(0)$ is a martingale as well.
\begin{proposition}\label{prop:L_n_conv_L_infty}
%
%
The limit $L_{\infty}:=W_{\infty}'(0)=\lim_{n\to\infty} L_n$ exists a.s.\ and in $L^p$ for every $0<p\leq p_0$.
\end{proposition}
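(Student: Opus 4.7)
The plan is to derive both claims from the Biggins--Uchiyama uniform convergence and $L^p$-boundedness result already quoted in Section~\ref{subsec:biggins_martingale}, using Cauchy's integral formula to extract the derivative $L_n = W_n'(0)$ from $W_n$: for any fixed $r\in(0,\delta_0)$,
\begin{equation}\label{eq:plan_cauchy}
L_n \;=\; \frac{1}{2\pi i}\oint_{|\beta|=r}\frac{W_n(\beta)}{\beta^2}\,\dd\beta.
\end{equation}

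\textbf{Almost sure convergence.} The Biggins--Uchiyama theorem asserts that $W_n\to W_\infty$ almost surely and uniformly on $\overline\bD_{\delta_0}$. This uniform convergence is inherited on the circle $\{|\beta|=r\}$ in~\eqref{eq:plan_cauchy}, so passing to the limit inside the contour integral gives $L_n \toas W_\infty'(0)=:L_\infty$.

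\textbf{Convergence in $L^p$.} The key observation is that $L_n$ is itself a martingale with respect to the natural filtration $\{\cF_n\}_{n\in\N}$ of the BRW. Indeed, $W_n(\beta)$ is a martingale for each fixed $\beta$, and Fubini's theorem --- whose hypotheses are met because $\sup_{|\beta|\leq\delta_0}|W_{n+1}(\beta)|$ is integrable by Biggins--Uchiyama --- allows interchanging conditional expectation with the contour integral in~\eqref{eq:plan_cauchy}, giving $\E[L_{n+1}\mid\cF_n]=L_n$. Next, since $p_0>2\geq 1$, Minkowski's integral inequality applied to~\eqref{eq:plan_cauchy} yields
$$
\sup_{n\in\N}\|L_n\|_{p_0}\;\leq\;\frac{1}{r}\sup_{n\in\N}\sup_{|\beta|=r}\|W_n(\beta)\|_{p_0}\;<\;\infty,
$$
the right-hand side being finite by the uniform $L^{p_0}$-boundedness part of Biggins--Uchiyama. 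Doob's martingale convergence theorem then produces an $L^{p_0}$-limit, which must coincide with the almost sure limit $L_\infty$ identified in the previous step. For arbitrary $0<p\leq p_0$, convergence in $L^p$ follows by the monotonicity $\|X\|_p\leq \|X\|_{p_0}$ of $L^p$-norms on the probability space.

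\textbf{Obstacles.} None of the steps is genuinely difficult; the substance of the result is already encapsulated in the quoted Biggins--Uchiyama theorem. The only points requiring a moment of care are the Fubini interchange in the verification of the martingale property for $L_n$ and the application of Minkowski's inequality to the contour integral --- both of which are immediate once one knows that $W_n(\beta)$ is uniformly $L^{p_0}$-controlled on $\overline\bD_{\delta_0}$. In the continuous-time setting the argument is identical, with $n$ replaced by $t$.
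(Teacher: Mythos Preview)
Your overall strategy---Cauchy's integral formula to pass from uniform control of $W_n(\beta)$ to control of $L_n=W_n'(0)$, followed by martingale convergence---is exactly the paper's approach. The almost-sure part is fine and matches the paper verbatim.

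There is, however, a real gap in the $L^p$ part. You invoke ``the uniform $L^{p_0}$-boundedness part of Biggins--Uchiyama'' to obtain $\sup_n\sup_{|\beta|=r}\|W_n(\beta)\|_{p_0}<\infty$. But the Biggins--Uchiyama result as quoted in Section~\ref{subsec:biggins_martingale} is stated only under the hypothesis that the moment assumption holds with some exponent in $(1,2]$, and correspondingly yields $L^p$-boundedness only for $p\le 2$. Since the standing Assumption~B requires $p_0>2$, the uniform $L^{p_0}$-bound you need is \emph{not} covered by that citation. This is precisely why the paper isolates and proves Proposition~\ref{prop:Biggins_Martingale_Lp_bounded} separately (using Rosenthal's inequality in place of the von Bahr--Esseen inequality to push the moment exponent above $2$), and then appeals to it, rather than to Biggins--Uchiyama, at this point. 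Once that proposition is in hand, your Minkowski argument (or the paper's equivalent Jensen--Fubini argument) goes through.

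In short: same route as the paper, but you have skipped the one nontrivial ingredient---the extension of the uniform $L^p$-bound on $W_n(\beta)$ from $p\le 2$ to $p\le p_0$---which the paper supplies as a standalone proposition.
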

\begin{proof}
Recall from~Section~\ref{subsec:biggins_martingale} that $W_n$, considered as a random element taking values in the Banach space $\bA_{\delta_0}$, converges a.s.\ to $W_{\infty}$, as $n\to\infty$. The mapping $f\mapsto f'(0)$ is continuous from $\bA_{\delta_0}$ to $\C$ by the Cauchy integral formula. Hence, $L_n=W'_n(0)$ converges to $L_{\infty}=W_{\infty}'(0)$ in the  a.s.\ sense.  

The proof of the $L^p$-convergence is based on a moment estimate for $W_n(\beta)$ stated in Proposition~\ref{prop:Biggins_Martingale_Lp_bounded} below.  It suffices to show that the martingale $L_n=W_n'(0)$ is bounded in $L^{p_0}$. By the Cauchy integral formula, for any sufficiently small $r>0$ we have
$$
\E |W_n'(0)|^{p_0} = \E \left|\frac {1} {2\pi} \int_0^{2\pi} \frac{W_n(r\eee^{i\varphi})}{r\eee^{i\varphi}}\dd \varphi\right|^{p_0}
\leq C \E \int_0^{2\pi} |W_n(r\eee^{i\varphi})|^{p_0} \dd \varphi,
$$
where the last step is by Jensen's inequality. Interchanging the expectation and the integral by the Fubini theorem and applying Proposition~\ref{prop:Biggins_Martingale_Lp_bounded}, we obtain the required $L^{p_0}$-boundedness: $\E |W_n'(0)|^{p_0}\leq C$.
\end{proof}
\begin{remark}
Since $\E W_n(\beta)=1$ for all $|\beta|\leq \delta_0$, we have $\E L_n = \E L_{\infty} = 0$. Consequently, $\E S_n = d n m^n$.
\end{remark}
\begin{remark}
With trivial modifications, the proof of Proposition~\ref{prop:L_n_conv_L_infty} can be extended to derivatives of arbitrary order $k\in\N_0$. Namely,  a.s.\ and in $L^p$, for every $0<p\leq p_0$, we have
\begin{equation}
W^{(k)}_n(0) \ton W_{\infty}^{(k)}(0).
\end{equation}
The $k$-th derivative $W_n^{(k)}(0)$ can be expressed through the ``empirical BRW moments''
$$
S_n^{(l)}=\sum_{j=1}^{N_n} z_{j,n}^l
$$
with $l=0,\ldots,k$. It is possible to generalize the results obtained here for $S_n=S_n^{(1)}$ to such higher moments.
\end{remark}

We will need a generalization of Proposition~\ref{prop:L_n_conv_L_infty} to arbitrary increasing sequences of stopping times. Let $0\leq T_1\leq T_2\leq \ldots$  be stopping times as in Section~\ref{subsec:FCLT_statement_strong}.
\begin{proposition}\label{prop:L_n_conv_L_infty_stop_times}
A.s.\ and in $L^p$ for every $0<p < p_0$ it holds that
\begin{equation}\label{eq:prop:L_n_conv_L_infty_stop_times}
L_{T_n} = \frac{S_{T_n} - dT_n N_{T_n}}{m^{T_n}} \ton L_{\infty}.
\end{equation}
\end{proposition}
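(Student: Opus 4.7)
The plan is to bootstrap Proposition~\ref{prop:L_n_conv_L_infty} to the random-time setting using Doob's maximal inequality and dominated convergence, exploiting only the assumption $T_n\to\infty$ a.s.

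For the a.s.\ statement, Proposition~\ref{prop:L_n_conv_L_infty} provides an event $\Omega_0$ of full probability on which $L_k(\omega)\to L_\infty(\omega)$ along deterministic indices, and assumption~\eqref{eq:lim_T_n_infty} yields an event $\Omega_1$ of full probability on which $T_n(\omega)\to\infty$. For each $\omega\in\Omega_0\cap\Omega_1$ the sequence $\bigl(T_n(\omega)\bigr)_n$ is, for this fixed $\omega$, a purely deterministic sequence tending to infinity, so $L_{T_n(\omega)}(\omega)\to L_\infty(\omega)$ by the usual subsequence property. This gives the a.s.\ convergence in~\eqref{eq:prop:L_n_conv_L_infty_stop_times}.

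For the $L^p$ statement with $0<p<p_0$, the idea is to dominate $|L_{T_n}|$ by the running maximum $M:=\sup_k |L_k|$ (respectively $\sup_{t\ge 0}|L_t|$ in continuous time, using the cadlag version of the martingale). From the proof of Proposition~\ref{prop:L_n_conv_L_infty} we already have that $L_n=W_n'(0)$ is a martingale with $\sup_n\E|L_n|^{p_0}<\infty$. Since $p_0>2>1$, Doob's maximal $L^{p_0}$ inequality gives
\[
\E M^{p_0}\le \Bigl(\tfrac{p_0}{p_0-1}\Bigr)^{p_0}\sup_n\E|L_n|^{p_0}<\infty.
\]
Together with $L_\infty\in L^{p_0}$ and the elementary bound $|L_{T_n}-L_\infty|^p\le 2^p(M^p+|L_\infty|^p)$, the integrability of the dominating function (because $M^p,|L_\infty|^p\in L^{p_0/p}$ with $p_0/p>1$) allows the dominated convergence theorem to upgrade the a.s.\ statement of Step~1 to $\E|L_{T_n}-L_\infty|^p\to 0$.

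The main technical point is precisely the need to evaluate the martingale at a random time; this is handled entirely by the $L^{p_0}$-maximal bound on $M$, which reduces the random-time statement to ordinary dominated convergence. In the continuous-time case, the only additional care required is that Doob's maximal inequality be applied to a cadlag version of $L_t=W_t'(0)$, which is guaranteed by the cadlag nature of the BRW point measures.
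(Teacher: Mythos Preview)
Your proof is correct and follows essentially the same route as the paper: the a.s.\ part is obtained by composing the deterministic-time convergence from Proposition~\ref{prop:L_n_conv_L_infty} with $T_n\to\infty$, and the $L^p$ part is deduced from Doob's maximal inequality applied to the $L^{p_0}$-bounded martingale $L_n$. The only cosmetic difference is that the paper phrases the last step via the Vitali convergence theorem (uniform $L^{p_0}$-boundedness of $L_{T_n}$ implies uniform integrability of $|L_{T_n}|^p$ for $p<p_0$), whereas you invoke dominated convergence with the integrable majorant $2^p(M^p+|L_\infty|^p)$; these are equivalent here.
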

\begin{proof}
Since $T_n\to +\infty$ a.s.,\ we have $L_{T_n}\to L_{\infty}$ a.s.\ by Proposition~\ref{prop:L_n_conv_L_infty}.
We have $|L_{T_n}|\leq \sup_{k\in\N} |L_k|$, and $L_k$ is a martingale bounded in $L^{p_0}$; see the proof of Proposition~\ref{prop:L_n_conv_L_infty}. By Doob's inequality, the sequence $L_{T_n}$ is uniformly bounded in $L^{p_0}$. By the Vitali convergence theorem, it follows that~\eqref{eq:prop:L_n_conv_L_infty_stop_times} holds in $L^p$ for all $0<p<p_0$.
\end{proof}

\begin{remark}
It remains open what moment assumption on the BRW is necessary and sufficient for Propositions~\ref{prop:L_n_conv_L_infty} and~\ref{prop:L_n_conv_L_infty_stop_times} to hold. Our standing assumption~B is certainly not the best possible. In fact, the proofs given above remain valid if we require~\eqref{eq:standing_assumption} to hold with some $p_0>1$. Anyway, in our applications to the analysis of algorithms condition~\eqref{eq:standing_assumption} is satisfied with arbitrarily large $p_0$.
\end{remark}

\subsection{Sum of the particle positions in the BRW: Conditional CLT}
Now we are ready to state a CLT for $L_{T_n}$. Let $0\leq T_1\leq T_2\leq \ldots$  be stopping times as in Section~\ref{subsec:FCLT_statement_strong}.
\begin{theorem}\label{theo:neininger_CLT_discrete}
We have
\begin{align}
\cL\left\{\sqrt{\frac{m^{T_n}}{T_n}}  \left(L_{\infty}- L_{T_n}\right)\Bigg | \cF_{T_n}\right\}
\toasw
\left\{\omega \mapsto \Normal_{0, \sigma^2\tau^2 N_{\infty}(\omega)}\right\}. \label{eq:L_n_neininger_asw}
\end{align}
\end{theorem}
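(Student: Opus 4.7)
The plan is to read off Theorem~\ref{theo:neininger_CLT_discrete} as a direct specialisation of the conditional FCLT (Theorem~\ref{thm:mainlim}) at the level of the first Taylor coefficient of $D_{T_n}$ at the origin.

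First I would differentiate the defining formula~\eqref{eq:D_n_u_def2}. A routine chain-rule calculation yields
$$
D'_{T_n}(0) \;=\; m^{\frac{1}{2}T_n}\cdot\frac{1}{\sqrt{T_n}}\bigl(W'_\infty(0) - W'_{T_n}(0)\bigr) \;=\; \sqrt{\frac{m^{T_n}}{T_n}}\bigl(L_\infty - L_{T_n}\bigr),
$$
so the random variable whose conditional law appears in~\eqref{eq:L_n_neininger_asw} is precisely $\phi(D_{T_n})$, where $\phi:\bA_R\to\bC$ is the evaluation-of-derivative functional $\phi(f)=f'(0)$. This map is bounded linear (hence continuous): by the Cauchy integral formula, $|f'(0)|\le r^{-1}\|f\|_{\bA_R}$ for any $0<r\le R$.

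Next I would transfer the a.s.w.\ convergence through $\phi$. Theorem~\ref{thm:mainlim} tells us that, for almost every $\omega\in\Omega$, the measure $\cL(D_{T_n}|\cF_{T_n})(\omega)\in\cM_1(\bA_R)$ converges weakly to $Q_\infty(\omega)$. Applying the classical continuous mapping theorem $\omega$-by-$\omega$ with the continuous map $\phi$, and combining with the substitution identity $\cL(\phi(X)|\cG)=\cL(X|\cG)\circ\phi^{-1}$ for regular conditional distributions, yields
$$
\cL\bigl(\phi(D_{T_n})\,\big|\,\cF_{T_n}\bigr) \;\toasw\; \bigl\{\omega\mapsto Q_\infty(\omega)\circ\phi^{-1}\bigr\}.
$$
It only remains to identify the limit. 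Differentiating $\Xi(u;\omega)=\sigma\sqrt{N_\infty(\omega)}\,\xi(\tau u)$ at $u=0$ gives $\sigma\tau\sqrt{N_\infty(\omega)}\,\xi'(0)$, and the series~\eqref{eq:xi_def} immediately gives $\xi'(0)=\xi_1\sim\Normal_{0,1}$. Since $\xi_1$ is independent of $N_\infty$, the pushforward $Q_\infty(\omega)\circ\phi^{-1}$ is the centred normal law with variance $\sigma^2\tau^2 N_\infty(\omega)$, which is exactly the right-hand side of~\eqref{eq:L_n_neininger_asw}.

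There is essentially no obstacle beyond Theorem~\ref{thm:mainlim} itself: the two auxiliary ingredients required are the (standard) continuity of $f\mapsto f'(0)$ on $\bA_R$ and the fact that a.s.w.\ convergence is preserved under continuous mappings (immediate from the pointwise CMT). The definedness issue on the event $\{R/\sqrt{T_n}>\delta_0\}$ flagged after Theorem~\ref{thm:mainlim} is handled identically here by invoking Proposition~\ref{prop:asw_properties}(a).
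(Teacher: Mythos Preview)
Your proposal is correct and follows essentially the same route as the paper: apply the continuous linear functional $f\mapsto f'(0)$ (justified via Cauchy) to both sides of Theorem~\ref{thm:mainlim}, compute $D'_{T_n}(0)=\sqrt{m^{T_n}/T_n}\,(L_\infty-L_{T_n})$, and identify the limit via $\xi'(0)=\xi_1\sim\Normal_{0,1}$. The only (harmless) wrinkle is your remark that ``$\xi_1$ is independent of $N_\infty$'': in the kernel $Q_\infty(\omega)$ the value $N_\infty(\omega)$ is already frozen, so no independence argument is needed---the pushforward is simply the law of the constant $\sigma\tau\sqrt{N_\infty(\omega)}$ times a standard normal.
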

\begin{proof}
Note that $f\mapsto f'(0)$ is a linear continuous map from $\bA_R$ to $\C$ by Cauchy's integral theorem; we will apply this map to both sides of~\eqref{eq:thm:mainlim}. Note that by~\eqref{eq:D_n_u_def2},
$$
D_{T_n}'(0) = \sqrt{\frac{m^{T_n}}{T_n}} (W_{\infty}'(0) - W_{T_n}'(0))=\sqrt{\frac{m^{T_n}}{T_n}} (L_{\infty} - L_{T_n}).
$$
Observe also that $\xi'(0)\sim \Normal_{0,1}$ by~\eqref{eq:xi_def}. By  the continuous mapping theorem, the a.s.w.\
convergence in~\eqref{eq:thm:mainlim} is preserved when applying the derivative map, hence we obtain~\eqref{eq:L_n_neininger_asw}.
\end{proof}

\begin{remark}
With the same justification as in Theorem~\ref{theo:heyde_CLT_very_general_1}, we can move $N_{\infty}$ from the right-hand side of~\eqref{eq:L_n_neininger_asw}  to the left-hand side.
\end{remark}

In particular, Proposition~\ref{prop:asw_weak} (see also Remark~\ref{rem:prop:asw_weak}) yields the following analogue of Corollary~\ref{cor:heyde_clt}.
\begin{corollary}
We have
\begin{align}
\sqrt{\frac{m^{T_n}}{T_n}}  (L_{\infty}- L_{T_n}) &\todistr  \Normal_{0, \sigma^2\tau^2 N_{\infty}},\label{eq:L_n_neininger_asw_cor1}\\
\sqrt{\frac{m^{T_n}}{N_{\infty}T_n}}  (L_{\infty}- L_{T_n}) &\tomix  \Normal_{0, \sigma^2\tau^2}.\label{eq:L_n_neininger_asw_cor2}
\end{align}
\end{corollary}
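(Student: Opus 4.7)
The plan is to derive both statements from Theorem~\ref{theo:neininger_CLT_discrete} by invoking the general machinery of Section~\ref{sec:asw}. Writing $X_n := \sqrt{m^{T_n}/T_n}\,(L_{\infty} - L_{T_n})$, the theorem provides the single input $\cL(X_n \mid \cF_{T_n}) \toasw \{\omega\mapsto \Normal_{0,\sigma^2\tau^2 N_{\infty}(\omega)}\}$, and both $X_n$ and $N_{\infty}$ are measurable with respect to $\cF_{\infty} = \bigvee_k \cF_k$.

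For~\eqref{eq:L_n_neininger_asw_cor1} I would simply apply Proposition~\ref{prop:asw_weak} together with Remark~\ref{rem:prop:asw_weak}: a.s.w.\ convergence of the conditional laws implies stable, and hence weak, convergence of $X_n$ to the mixture law $\E[\Normal_{0,\sigma^2\tau^2 N_{\infty}(\cdot)}]$, which is by definition the variance mixture $\Normal_{0,\sigma^2\tau^2 N_{\infty}}$.

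For~\eqref{eq:L_n_neininger_asw_cor2} the objective is to absorb the random factor $1/\sqrt{N_{\infty}}$ into $X_n$ so that the limiting kernel becomes the deterministic normal $\Normal_{0,\sigma^2\tau^2}$; stable convergence against a constant kernel is, by definition, mixing convergence. Following the template of the proof of Theorem~\ref{theo:heyde_CLT_very_general_1}, I would decompose
\begin{equation*}
\sqrt{\frac{m^{T_n}}{N_{\infty} T_n}}\,(L_{\infty} - L_{T_n})
\;=\; Y_n \cdot A_n X_n, \qquad
A_n := \sqrt{\frac{m^{T_n}}{N_{T_n}}}, \quad Y_n := \sqrt{\frac{N_{T_n}}{m^{T_n} N_{\infty}}},
\end{equation*}
where $A_n$ is $\cF_{T_n}$-measurable and $Y_n \to 1$ almost surely by~\eqref{eq:N_infty_def}. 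Since $A_n$ is measurable, $\cL(A_n X_n \mid \cF_{T_n})(\omega)$ is the pushforward of $\cL(X_n \mid \cF_{T_n})(\omega)$ by dilation by $A_n(\omega)$; combining, pointwise for a.a.\ $\omega$, the weak limit $\cL(X_n\mid\cF_{T_n})(\omega) \to \Normal_{0,\sigma^2\tau^2 N_{\infty}(\omega)}$ with $A_n(\omega) \to 1/\sqrt{N_{\infty}(\omega)}$ yields $\cL(A_n X_n \mid \cF_{T_n}) \toasw \{\omega\mapsto \Normal_{0,\sigma^2\tau^2}\}$. Proposition~\ref{prop:asw_properties}(b) then absorbs $Y_n$ without changing the limit, and Proposition~\ref{prop:asw_weak} converts the a.s.w.\ conditional convergence to mixing convergence.

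The main technical point is the dilation step in the second part: justifying that multiplication by the $\cF_{T_n}$-measurable factor $A_n$, whose almost-sure limit $1/\sqrt{N_{\infty}}$ is \emph{not} $\cF_{T_n}$-measurable, still transforms the a.s.w.\ limit as predicted. For each fixed $\omega$ in the full-measure set of convergence, this reduces to the elementary observation that dilation is jointly continuous on $(0,\infty)\times \cM_1(\bR)$ equipped with the weak topology, so no new ingredient beyond Theorem~\ref{theo:neininger_CLT_discrete} is required. This is precisely the device used in the proof of Theorem~\ref{theo:heyde_CLT_very_general_1}.
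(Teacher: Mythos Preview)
Your proposal is correct and follows essentially the same approach as the paper: for~\eqref{eq:L_n_neininger_asw_cor1} the paper simply cites Proposition~\ref{prop:asw_weak} and Remark~\ref{rem:prop:asw_weak}, and for~\eqref{eq:L_n_neininger_asw_cor2} it refers back (via the remark preceding the corollary) to the proof of Theorem~\ref{theo:heyde_CLT_very_general_1}, which is exactly the $A_n$/$Y_n$ decomposition with the pointwise Slutsky argument you spell out. Your justification of the dilation step via joint continuity is the same as the paper's ``Slutsky's lemma pointwise'' phrasing.
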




\subsection{Applications to random trees}\label{subsec:applications_to_trees}
In this section we show how our results can be applied to binary search trees and random recursive trees. These models
are random trees grown by attaching one new node in each step, according to certain random rules.  By randomizing the
times $T_1,T_2,\ldots$ at which the new nodes are attached, these random trees can be embedded into a suitable BRW in
continuous time; see~\citet{chauvin_etal,chauvin_rouault}. This procedure can be seen as an instance of poissonization. The embeddings are constructed such that the positions of the particles in
the BRW correspond to the depths of external (or internal) nodes of the random tree. Let $(\Omega,\cF,\P)$ be the
probability space on which the random trees are defined. The times $T_1,T_2,\ldots$ form a Yule process on some other
probability space $(\Omega',\cF',\P')$, and the BRW is then defined on the product space.
Using our results on the BRW we will obtain, after a depoissonization, results on random trees.


\vspace*{2mm}
\noindent
\textit{The Yule process.} Fix an intensity $\lambda>0$. Let $(\Omega', \cF', \P')$ be a probability space carrying independent random variables $\tau_1, \tau_2,\ldots$ with
$$
\tau_n\sim \text{Exp}(\lambda n).
$$
We regard $T_{n+1}=\tau_1+\ldots+\tau_{n}$, $n\in\N$, $T_1=0$, as times at which the $n$-th particle in a continuous-time BRW is born. We denote by $N_t=\sum_{n=1}^{\infty}\ind_{T_n\leq t}$ the number of particles at time $t\geq 0$. Then $\{N_t\colon t\geq 0\}$ is a continuous-time Markov process (called the Yule process) with values in $\N$ and transition rates
$$
n\overset{\text{intensity } \lambda n}{\xrightarrow{\hspace*{1cm}}} n+1. 
$$
One can imagine that each particle splits into two new particles with intensity $\lambda$, independently of the other
particles and of the past of the process. Note, however, that the random variables specifying \textit{which} particle splits are \textit{not} defined on the probability space $(\Omega', \cF',\P')$. The expected number of particles at time $t\geq 0$ is $\E N_t=\eee^{\lambda t}$ and hence, $m = \E N_1 = \eee^{\lambda}$. Also, it is known that
\begin{equation}\label{eq:N_infty_cont_time}
N_{\infty} =\lim_{t\to\infty} \frac{N_t}{\eee^{\lambda t}} \sim \Exp(1).
\end{equation}
In particular, in all examples below we have
$
\sigma^2=\Var N_{\infty}=1
$.

\vspace*{2mm}
\noindent
\textit{Genealogical structure and displacements.} Consider a continuous-time BRW in which the particles split at times
$T_1,T_2,\ldots$ introduced above. In any such splitting, a particle disappears and generates exactly two new
particles. We assume that the particles do not move between the splittings.  In order to specify the BRW we need to
specify the particle that splits at time $T_n$ (genealogical structure), and the displacements of its offspring. We
further assume that the random variables describing the genealogical structure and displacements are defined on a
probability space $(\Omega, \cF, \P)$. Then, the BRW can be defined on the product space $(\overline \Omega, \overline \cF, \overline
\P) = (\Omega', \cF', \P')\otimes (\Omega, \cF, \P)$. Finally, we assume that~\eqref{eq:standing_assumption} holds for
arbitrary $p_0>0$ since, as  is easy to verify,  this is true in all our examples.


Recall that we denote the positions of the particles at time $T_n$ by $z_{1,T_n}\leq\ldots\leq z_{n,T_n}$. The variable
\begin{equation}
S_{T_n}=\sum_{j=1}^n z_{j, T_n}
\end{equation}
will be interpreted below as the internal or external path length of a random tree. It is easy to see that the random variable $S_{T_n}=S_{T_n(\omega')}(\omega',\omega)$ (which is defined on the product space $\overline \Omega = \Omega'\times \Omega$) depends on the second coordinate $\omega$ only. So, we can consider $S_{T_n}$ as a random variable defined on $\Omega$.
The next theorem (whose proof we defer to Section~\ref{subsec:prop:neininger_BRW_LLN_proof}) differs from
Proposition~\ref{prop:L_n_conv_L_infty_stop_times} by a more convenient choice of normalization.
\begin{theorem}\label{prop:neininger_BRW_LLN}
Under the assumptions of the present section, on the probability space $(\Omega, \cF, \P)$ we have
\begin{equation}\label{eq:neininger_general_brw_asLp}
\tilde L_{T_n} := \frac{S_{T_n}-\frac{d}{\lambda}n\log n} {n} \ton
\tilde L_{\infty} 
\end{equation}
a.s.\ and in $L^p$ for every $p > 0$,  where
\begin{equation}\label{eq:tilde_L_infty}
\tilde L_{\infty} = \frac{L_{\infty}}{N_{\infty}} - \frac{d}{\lambda} \log N_{\infty}.
\end{equation}
\end{theorem}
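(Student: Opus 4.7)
My plan is to derive the statement as a Slutsky-type consequence of Proposition~\ref{prop:L_n_conv_L_infty_stop_times} combined with the classical asymptotics of the Yule process. The structural observation is that, by construction of the embedding, $N_{T_n}=n$ deterministically and $m=\E N_1=\eee^{\lambda}$. Substituting these into $L_{T_n}=(S_{T_n}-d T_n N_{T_n})/m^{T_n}$ and solving for $S_{T_n}$ produces the identity
\begin{equation*}
\tilde L_{T_n}\;=\;\frac{S_{T_n}}{n}-\frac{d}{\lambda}\log n\;=\;L_{T_n}\cdot\frac{\eee^{\lambda T_n}}{n}\;+\;\frac{d}{\lambda}\log\!\Bigl(\frac{\eee^{\lambda T_n}}{n}\Bigr),
\end{equation*}
which cleanly separates the ``Biggins'' part $L_{T_n}$ from the ``clock'' part $\eee^{\lambda T_n}/n$.

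The three limits needed are then immediate. Specialising $W_n(\beta)\to W_\infty(\beta)$ at $\beta=0$ gives $n/\eee^{\lambda T_n}=N_{T_n}/m^{T_n}\to N_\infty$ a.s., whence $\eee^{\lambda T_n}/n\to 1/N_\infty$ a.s.\ and $\lambda T_n-\log n=\log(\eee^{\lambda T_n}/n)\to -\log N_\infty$ a.s. Proposition~\ref{prop:L_n_conv_L_infty_stop_times} supplies $L_{T_n}\to L_\infty$ a.s. Plugging the three limits into the identity above yields $\tilde L_{T_n}\to L_\infty/N_\infty-(d/\lambda)\log N_\infty=\tilde L_\infty$ a.s., which is the almost-sure half of~\eqref{eq:neininger_general_brw_asLp}.

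For the $L^p$ statement I would argue via Vitali: granted the a.s.\ limit, it suffices to establish a uniform $L^{p+\eps}$ bound on $\tilde L_{T_n}$. The logarithmic term is the easier one: since $\lambda\tau_k\sim\Exp(k)$ are independent, writing $\lambda T_n-\log n=\sum_{k=1}^{n-1}(\lambda\tau_k-1/k)+(H_{n-1}-\log n)$ exhibits the random part as a centered independent sum with $\E|\lambda\tau_k-1/k|^q=O(k^{-q})$, hence Rosenthal's inequality gives uniform boundedness in every $L^q$. The product $L_{T_n}\cdot\eee^{\lambda T_n}/n$ is handled by H\"older: under the blanket assumption that~\eqref{eq:standing_assumption} holds for arbitrarily large $p_0$, Doob's inequality (as in the proof of Proposition~\ref{prop:L_n_conv_L_infty}) makes $L_{T_n}$ bounded in every $L^q$, while a direct moment generating function computation on the Yule waiting times yields $\E[(\eee^{\lambda T_n}/n)^s]=n^{-s}\prod_{k=1}^{n-1}k/(k-s)=O(1)$ uniformly in $n$ for every $0<s<1$. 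The main difficulty here is that $\eee^{\lambda T_n}/n$ fails to be $L^1$-bounded (its a.s.\ limit $1/N_\infty$ already has infinite mean); this is circumvented by choosing the H\"older exponent of $\eee^{\lambda T_n}/n$ arbitrarily close to $1$ from below and compensating through the unbounded integrability of the Biggins factor $L_{T_n}$.
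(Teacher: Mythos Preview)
Your almost-sure argument coincides with the paper's Step~1, and your treatment of the logarithmic term via Rosenthal is a valid alternative to the paper's slicker bound $\E|\lambda T_n-\log n|^p\le C\E(n/\eee^{\lambda T_n})^{\eps}+C\E(n/\eee^{\lambda T_n})^{-\eps}$.

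The gap is in the product term $L_{T_n}\cdot \eee^{\lambda T_n}/n$. Your own computation shows that $\E[(\eee^{\lambda T_n}/n)^s]$ is bounded only for $0<s<1$ (the factor $k=1$ in the product is $1/(1-s)$), so in the H\"older bound
\[
\E\bigl|L_{T_n}\cdot \eee^{\lambda T_n}/n\bigr|^p\le \bigl(\E|L_{T_n}|^{pa}\bigr)^{1/a}\bigl(\E(\eee^{\lambda T_n}/n)^{pa'}\bigr)^{1/a'}
\]
you need $pa'<1$, i.e.\ $a'<1/p$. For $p\ge 1$ this forces $a'<1$, which is impossible for a H\"older exponent. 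No amount of integrability of $L_{T_n}$ can compensate: the constraint is on $a'$, not on $a$. Hence your scheme yields $L^p$-boundedness only for $p<1$.

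The paper escapes this by exploiting a structural feature of the embedding that you did not invoke: $S_{T_n}$ (hence $\tilde L_{T_n}$) lives on the factor $\Omega$, while $T_n$ lives on $\Omega'$, so $\tilde L_{T_n}$ and $n/\eee^{\lambda T_n}$ are \emph{independent}. One first bounds $L_{T_n}^*:=\tilde L_{T_n}\cdot (n/\eee^{\lambda T_n})=L_{T_n}+\tfrac{d}{\lambda}\,\tfrac{n}{\eee^{\lambda T_n}}(\lambda T_n-\log n)$ in $L^p$ (now the factor $n/\eee^{\lambda T_n}$ has \emph{all} positive moments, since $\E(n/\eee^{\lambda T_n})^r\to\Gamma(r+1)$ for every $r>-1$), and then factors
\[
\E|L_{T_n}^*|^p=\E|\tilde L_{T_n}|^p\cdot \E(n/\eee^{\lambda T_n})^p,
\]
with the second factor bounded below, to extract the bound on $\E|\tilde L_{T_n}|^p$. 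The inversion from $\eee^{\lambda T_n}/n$ to $n/\eee^{\lambda T_n}$ together with independence is the missing idea.
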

\begin{remark}\label{rem:tilde_L_infty}
In the proof of Theorem~\ref{prop:neininger_BRW_LLN} we will see that the random variable $\tilde L_{\infty}$ (defined originally on the product space $\overline \Omega=\Omega'\times \Omega$) depends only on the second component $\omega\in\Omega$. By discarding the first component we can consider $\tilde L_\infty$ as a random variable on $\Omega$.
\end{remark}

The following central limit theorem is an analogue of Theorem~\ref{theo:neininger_CLT_discrete}. The proof will be given in  Section~\ref{subsec:prop:neininger_BRW_proof}. First, we need to introduce several  $\sigma$-algebras. Let $\cF_n'\subset \cF'$ be the $\sigma$-algebra on $\Omega'$ generated by $T_1,\ldots,T_n$. This $\sigma$-algebra contains information about the birth times of the particles, but it does not contain information on the genealogical and spatial structure of the BRW. Denote by $\cG_n\subset \cF$ the $\sigma$-algebra on $\Omega$ containing the information about the genealogical  structure and the displacements of the first $n$ particles in the BRW. Recall that $\cF_{T_n}\subset \cF'\otimes \cF$ is the $\sigma$-algebra on $\overline\Omega = \Omega'\times \Omega$ generated by the BRW up to time $T_n$.  Clearly, $\cF_{T_n} = \cF_n'\otimes \cG_n$.
\begin{theorem}\label{prop:neininger_BRW_CLT}
Under the assumptions of the present section, on the probability space $(\Omega, \cF, \P)$ we have
\begin{equation}\label{eq:neininger_general_brw}
\cL \left\{\sqrt{\frac{\lambda n}{\log n}} \left( \tilde L_{\infty} - \frac{S_{T_n}-\frac{d}{\lambda}n\log n} {n}\right)\Bigg | \cG_n\right\}\toasw \left\{\omega \mapsto \Normal_{0, \sigma^2\tau^2}\right\}.
\end{equation}
\end{theorem}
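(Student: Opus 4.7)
The plan is to first establish an a.s.w.\ conditional CLT with respect to the richer filtration $\{\cF_{T_n}\}$ on the product space $\overline\Omega=\Omega'\times\Omega$, and only then descend to $\{\cG_n\}$ via Proposition~\ref{prop:asw_total_expectation}. Starting from the derivative martingale, $N_{T_n}=n$ together with $m=\eee^{\lambda}$ and $S_{T_n}=m^{T_n}L_{T_n}+dT_nN_{T_n}$ yields $\tilde L_{T_n}=L_{T_n}/W_n-(d/\lambda)\log W_n$, where $W_n:=N_{T_n}/m^{T_n}=n\eee^{-\lambda T_n}$ is $\cF_{T_n}$-measurable and $W_n\to N_\infty$ a.s. Subtracting from $\tilde L_\infty=L_\infty/N_\infty-(d/\lambda)\log N_\infty$ and regrouping gives the decomposition
\begin{equation*}
\tilde L_\infty-\tilde L_{T_n}=\frac{L_\infty-L_{T_n}}{W_n}+r_n,\qquad r_n=\frac{L_\infty(W_n-N_\infty)}{N_\infty W_n}-\frac{d}{\lambda}\log\frac{N_\infty}{W_n},
\end{equation*}
and since $W_n\to N_\infty>0$ a.s.\ we have $|r_n|\le C(\omega)|W_n-N_\infty|$ a.s.\ for large $n$.

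For the main term I rewrite the Theorem~\ref{theo:neininger_CLT_discrete} normalization: from $\eee^{\lambda T_n}/n=1/W_n$ and $\lambda T_n/\log n\to 1$ a.s.,
\begin{equation*}
\sqrt{m^{T_n}/T_n}=Y_n\sqrt{\lambda n/(W_n\log n)},\qquad Y_n:=\sqrt{\log n/(\lambda T_n)},
\end{equation*}
with $Y_n$ being $\cF_{T_n}$-measurable and $Y_n\to 1$ a.s. Theorem~\ref{theo:neininger_CLT_discrete} furnishes $\cL(\sqrt{m^{T_n}/T_n}(L_\infty-L_{T_n})\mid\cF_{T_n})\toasw\Normal_{0,\sigma^2\tau^2 N_\infty}$, and rescaling by the $\cF_{T_n}$-measurable factor $1/(Y_n\sqrt{W_n})\to 1/\sqrt{N_\infty}$ amounts, pointwise in $\omega$, to a scalar pushforward of the conditional law; by continuity of this pushforward under weak limits,
\begin{equation*}
\cL\!\left(\sqrt{\lambda n/\log n}\,\frac{L_\infty-L_{T_n}}{W_n}\,\Big|\,\cF_{T_n}\right)\toasw\Normal_{0,\sigma^2\tau^2},
\end{equation*}
where the random variance $N_\infty$ cancels against the $1/\sqrt{N_\infty}$ rescaling.

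The hard part is to kill the remainder $\sqrt{\lambda n/\log n}\,r_n$ \emph{almost surely}, since Proposition~\ref{prop:asw_properties}(a) demands a.s.\ vanishing rather than merely $o_P(1)$ as would follow from the Heyde CLT alone. I invoke the branching (strong Markov) property of the Yule process at the stopping time $T_n$: each of the $n$ particles then alive initiates an independent Yule sub-process, so $N_\infty=W_n\bar N^{(n)}$ with $\bar N^{(n)}:=n^{-1}\sum_{j=1}^n N_\infty^{(j)}$, where the $N_\infty^{(j)}$ are i.i.d.\ $\Exp(1)$ and independent of $\cF_{T_n}$. The classical LIL for i.i.d.\ sums gives $|\bar N^{(n)}-1|=O(\sqrt{\log\log n/n})$ a.s.,\ hence $|W_n-N_\infty|=O(\sqrt{\log\log n/n})$ a.s.\ and $\sqrt{\lambda n/\log n}\,|r_n|=O(\sqrt{\log\log n/\log n})\to 0$ a.s. Proposition~\ref{prop:asw_properties}(a) then upgrades the main-term convergence to $\cL(\sqrt{\lambda n/\log n}(\tilde L_\infty-\tilde L_{T_n})\mid\cF_{T_n})\toasw\Normal_{0,\sigma^2\tau^2}$. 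Since $\tilde L_{T_n}$ and $\tilde L_\infty$ are both $\cG_\infty$-measurable (Remark~\ref{rem:tilde_L_infty}) and $\cG_n\subset\cF_{T_n}$ under the natural embedding, Proposition~\ref{prop:asw_total_expectation} applies, and because the limit kernel $\Normal_{0,\sigma^2\tau^2}$ is a constant (deterministic) probability measure, taking conditional expectation with respect to $\cG_\infty$ leaves it unchanged, which gives the claimed convergence.
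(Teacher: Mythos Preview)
Your overall strategy mirrors the paper's: first establish the a.s.w.\ CLT on the product space with respect to $\cF_{T_n}$, then descend to $\cG_n$ via Proposition~\ref{prop:asw_total_expectation}; decompose $\tilde L_\infty-\tilde L_{T_n}$ into a main piece driven by $L_\infty-L_{T_n}$ and a remainder controlled by $W_n-N_\infty$; handle the main piece via Theorem~\ref{theo:neininger_CLT_discrete} together with an $\cF_{T_n}$-measurable rescaling; and kill the remainder by an LIL-type rate for $W_n-N_\infty$. Your treatment of the main term and the descent to $\cG_n$ is correct and matches the paper's Steps~1 and~3.

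There is, however, a genuine gap in your remainder argument. You write $N_\infty=W_n\bar N^{(n)}$ with $\bar N^{(n)}=n^{-1}\sum_{j=1}^n N_\infty^{(j)}$ and then invoke ``the classical LIL for i.i.d.\ sums'' to conclude $|\bar N^{(n)}-1|=O(\sqrt{\log\log n/n})$ a.s. But the classical LIL applies to partial sums of a \emph{single fixed} i.i.d.\ sequence, whereas here the variables $N_\infty^{(1)},\ldots,N_\infty^{(n)}$ depend on $n$: when one of the $n$ particles splits at time $T_{n+1}$, one of the $N_\infty^{(j)}$ is replaced by two new limits and the remaining ones are rescaled by $\eee^{\lambda(T_{n+1}-T_n)}$. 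What you have is a triangular array, and the LIL does not apply to it as stated; an almost-sure bound across all $n$ does not follow from the fact that for each fixed $n$ the row consists of i.i.d.\ exponentials. The paper closes this gap via Kendall's theorem: conditionally on $N_\infty=y$, the points $y(\eee^{\lambda T_n}-1)$, $n\ge 2$, form a unit-rate Poisson process on $(0,\infty)$, so the $n$th point is a genuine partial sum of one i.i.d.\ $\Exp(1)$ sequence, to which the LIL for the Poisson process applies directly, yielding $|N_\infty\eee^{\lambda T_n}-n|=O(\sqrt{n\log\log n})$ a.s.\ and hence the needed rate for $|W_n-N_\infty|$. Alternatively one may quote Heyde's LIL for branching processes. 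Either route repairs your argument; the branching decomposition alone does not.
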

Using Proposition~\ref{prop:asw_weak} we obtain
\begin{corollary}\label{cor:neininger_BRW_CLT}
The following convergence holds in the mixing (and hence, distributional) sense:
\begin{equation}\label{eq:cor:neininger_general_brw}
\sqrt{\frac{\lambda n}{\log n}} \left( \tilde L_{\infty} - \frac{S_{T_n}-\frac{d}{\lambda}n\log n} {n}\right)\tomix \Normal_{0, \sigma^2\tau^2}.
\end{equation}
\end{corollary}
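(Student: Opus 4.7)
The plan is to derive the corollary directly from Theorem~\ref{prop:neininger_BRW_CLT} by applying Proposition~\ref{prop:asw_weak}. Set
$$
X_n := \sqrt{\frac{\lambda n}{\log n}}\left(\tilde L_{\infty} - \frac{S_{T_n}-\frac{d}{\lambda}n\log n}{n}\right),
$$
viewed as a random variable on $(\Omega, \cF, \P)$, and work with the filtration $\{\cG_n\}_{n\in\bN}$ introduced just before Theorem~\ref{prop:neininger_BRW_CLT}.

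The verification of the hypotheses of Proposition~\ref{prop:asw_weak} has two ingredients. First, I need $X_n$ to be measurable with respect to $\cG_\infty := \bigvee_{k} \cG_k$. Since $S_{T_n}$ depends only on the genealogy and displacements of the first $n$ particles, it is $\cG_n$-measurable. By Theorem~\ref{prop:neininger_BRW_LLN} together with Remark~\ref{rem:tilde_L_infty}, the limit $\tilde L_{\infty}$ can be regarded as a random variable on $\Omega$ arising as an almost sure limit of $\cG_n$-measurable functionals; hence $\tilde L_{\infty}$ is $\cG_\infty$-measurable, and so is $X_n$. Second, Theorem~\ref{prop:neininger_BRW_CLT} provides exactly the input
$$
\cL(X_n\mid \cG_n) \toasw \{\omega \mapsto \Normal_{0,\sigma^2\tau^2}\}.
$$

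Applying Proposition~\ref{prop:asw_weak} then yields that $X_n$ converges stably to the kernel $\omega \mapsto \Normal_{0,\sigma^2\tau^2}$. Because this limiting kernel is \emph{constant} in $\omega$, stable convergence to it coincides with \emph{mixing} convergence to $\Normal_{0,\sigma^2\tau^2}$ by the definition recalled in Section~\ref{sec:asw} (taking $\mu_A = \Normal_{0,\sigma^2\tau^2}$ independently of $A$ in~\eqref{eq:stable_def}). This is precisely~\eqref{eq:cor:neininger_general_brw}, and distributional convergence to $\Normal_{0,\sigma^2\tau^2}$ follows since mixing convergence always implies weak convergence to the same limit.

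There is no real obstacle here: the corollary is a pure bookkeeping consequence of Theorem~\ref{prop:neininger_BRW_CLT}, with the only mildly delicate point being the $\cG_\infty$-measurability of $\tilde L_{\infty}$, which is handled by Remark~\ref{rem:tilde_L_infty}. The essential mathematical work has already been carried out in proving the a.s.w.\ statement~\eqref{eq:neininger_general_brw} and in establishing the general implication ``a.s.w.\ $\Rightarrow$ stable'' in Proposition~\ref{prop:asw_weak}.
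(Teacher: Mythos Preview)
Your proposal is correct and follows exactly the paper's approach: the corollary is stated immediately after Theorem~\ref{prop:neininger_BRW_CLT} with the one-line justification ``Using Proposition~\ref{prop:asw_weak} we obtain,'' and your argument simply fills in the (routine) verification of the measurability hypothesis and the observation that stable convergence to a constant kernel is mixing convergence.
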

\begin{remark}
Note that the variance of the limiting distribution is deterministic, which is in sharp contrast to Theorem~\ref{theo:neininger_CLT_discrete}. See Remark~\ref{rem:why_variance_const} for an explanation.
\end{remark}

Now we are ready to apply these results to random trees.


\subsubsection{Binary search trees}\label{subsec:BST}
This model appears for example in the analysis of the \textsc{Quicksort} algorithm. Let $\bV=\cup_{k=0}^{\infty}\{0,1\}^k$ be the set of all finite words over the alphabet $\{0,1\}$ (including the empty word $\emptyset$). One can consider $\bV$ as the set of nodes of an infinite binary tree with root $\emptyset$. Each node $(\eps_1,\ldots,\eps_k)$ of depth $k$ is connected to two nodes $(\eps_1,\ldots,\eps_k,0)$ and $(\eps_1,\ldots,\eps_k,1)$ of depth  $k+1$. A \textit{binary tree} is a non-empty finite subset $X\subset \bV$ with the property that together with every node $(\eps_1,\ldots,\eps_k)\neq \emptyset$ it contains its predecessor $(\eps_1,\ldots,\eps_{k-1})$. The \textit{external nodes} of a binary tree $X$ are those nodes $(\eps_1,\ldots,\eps_k)\in \bV\bsl X$ for which $(\eps_1,\ldots,\eps_{k-1})\in X$. It is easy to see that the number of external nodes of $X$ exceeds the number of nodes of $X$ by $1$.

Consider a growing sequence $X_1,X_2,\ldots$ of random binary trees constructed as follows. Let $X_1$ be the tree with
one node $\emptyset$. Inductively, given $X_n$ (which is a binary tree with $n$ nodes), choose uniformly at random one
of the $n+1$ external nodes of $X_n$ and attach it to the tree. Denote the tree thus constructed by $X_{n+1}$ and
proceed further in the same manner. The random tree $X_n$ is called the \textit{binary search tree} with $n$ nodes. For
more details we refer to~\citet[Ch.~6]{drmota_book}.  We will be interested in the \textit{external path length} of
$X_n$, denoted by $\EPL_n$, which is the sum of depths of all $n+1$ external nodes of $X_n$. For example, the number
$K_n$ of comparisons used by the \textsc{Quicksort} algorithm applied to a random permutation of $n$ elements has the
same distribution as $\EPL_n-2n$. Let $(\Omega,\cF,\P)$ be the probability space on which $X_1,X_2,\ldots$ are defined
and let $\cG_n\subset \cF$ be the $\sigma$-algebra generated by $X_1,\ldots,X_n$.

Let us construct an embedding of the binary search trees into a BRW. Consider a continuous-time BRW in which the particles do not move between the splittings and each particle (located, say, at $x$) splits with intensity $\lambda = 1$ into two particles located at $x+1$:
$$
\delta_x \overset{\text{intensity } 1}{\xrightarrow{\hspace*{1cm}}} 2\delta_{x+1}.
$$
The particles of the BRW correspond to the external nodes, and their positions at time $T_n$ correspond to the depths of the
external nodes in the binary search tree with $n$ nodes. Hence, $S_{T_n}$ can be interpreted as the external path length
$\EPL_n$ of the binary search tree with $n$ nodes. We have
$$
\varphi(\beta)=2\eee^{\beta}-1,
\quad
\lambda=\varphi(0)=1,
\quad
d=\varphi'(0)=2,
\quad
\tau^2=\varphi''(0)=2.
$$
From Theorem~\ref{prop:neininger_BRW_LLN} we obtain that there is a limit random variable $\EPL_{\infty}$ such that a.s.\ and in $L^p$, for all $p>0$,
\begin{equation}
\frac{\EPL_n-2n\log n}{n} \ton \EPL_{\infty}.
\end{equation}
For $p=2$, this recovers a result of~\citet{regnier}. In view of the a.s.\ convergence, convergence in $L^p$ for general $p>0$ follows from R\"osler's~\cite{roesler} result on the convergence of the respective distributions in the Wasserstein $d_p$-metric.
From Theorem~\ref{prop:neininger_BRW_CLT} we obtain that on the probability space $(\Omega,\cF,\P)$,
\begin{equation}\label{eq:BST_CLT_conditioned}
\cL\left\{\sqrt{\frac{n}{2\log n}} \left(\EPL_{\infty} - \frac{\EPL_n-2n\log n}{n}\right)\Bigg | \cG_{n}\right\} \toasw \{\omega\mapsto\Normal_{0,1}\}.
\end{equation}
In particular, we obtain the following CLT
\begin{equation}\label{eq:BST_CLT}
\sqrt{\frac{n}{2\log n}} \left(\EPL_{\infty} - \frac{\EPL_n-2n\log n}{n}\right) \tomix \Normal_{0,1}.
\end{equation}
Thus, we recovered the CLT of~\citet{neininger}, but we have a stronger (mixing as compared to weak) mode of convergence. By the properties of mixing convergence, see~\cite[Prop.~2]{aldous_eagleson}, we also have the joint convergence
\begin{equation}\label{eq:BST_CLT_joint}
\left(\sqrt{\frac{n}{2\log n}} \left(\EPL_{\infty} - \frac{\EPL_n-2n\log n}{n}\right), \EPL_{\infty}\right) \todistr (Z, \EPL_{\infty}),
\end{equation}
where $Z\sim \Normal_{0,1}$ is independent of $\EPL_{\infty}$. This is of interest, for example, in connection with the
asymptotic distribution of the ratio of the standardized path length and its limit.

\begin{remark}\label{rem:prediction}
One can use~\eqref{eq:BST_CLT_conditioned} to construct strong prediction intervals for $\EPL_\infty$. By a strong (asymptotic) prediction interval at level $1-\alpha$ for $\EPL_\infty$ we mean two sequences of random variables $\theta_n^-$ and $\theta_n^+$ defined on $(\Omega, \cF, \P)$ such that
\begin{enumerate}
\item $\theta_n^-$ and $\theta_n^+$ are measurable w.r.t.\ $\cG_n$;
\item
$\lim_{n\to\infty} \P[\theta_n^- \leq \EPL_\infty \leq \theta_n^+| \cG_n] = 1-\alpha$ a.s.
\end{enumerate}
It follows from~\eqref{eq:BST_CLT_conditioned} that a strong prediction interval for $\EPL_\infty$ is given by
$$
\theta_n^\pm = \frac{\EPL_n-2n\log n}{n} \pm \sqrt\frac{2\log n}{n} z_{1-\frac \alpha 2},
$$
where $z_{1-\frac \alpha 2}$ is the $(1-\frac \alpha 2)$-quantile of the standard normal distribution.
\end{remark}


\subsubsection{Random recursive trees}
This well-known model, see~\citet[Ch.~6]{drmota_book}, is defined as follows. Consider a sequence of random trees $X_1,X_2,\ldots$ generated as follows. Each $X_n$ is a tree with $n$ nodes labelled by $1,\ldots,n$. The tree $X_1$ consists of one node (root) labelled by $1$. Inductively, given the tree $X_n$, we construct the tree $X_{n+1}$ as follows. Among the $n$ nodes of $X_n$ we choose one uniformly at random, attach to it a new direct descendant labeled  by $n+1$, and denote the resulting tree by $X_{n+1}$.  Denote by $(\Omega,\cF,\P)$ the probability space on which $X_1,X_2,\ldots$ are defined. Let $\cG_n\subset \cF$ be the $\sigma$-algebra generated by $X_1,\ldots,X_n$.

Let us interpret the depths of the  nodes of a random recursive tree in terms of a suitable BRW. Consider a continuous-time BRW in which the particles do not move between the splittings and each particle (located, say, at $x$) splits with intensity $1$ into one particle located at $x$ and one particle located at $x+1$:
$$
\delta_x \overset{\text{intensity } 1}{\xrightarrow{\hspace*{1cm}}} \delta_x + \delta_{x+1}.
$$
It is easy to see that the positions of the $n$ particles of the BRW at time $T_n$ have the same distribution as the depths of the nodes in a random recursive tree with $n$ nodes. Here, the depth means the distance to the node labelled by $1$.  The random variable $S_{T_n}$ can be interpreted as the internal path length, denoted by $\IPL_n$, of the random recursive tree with $n$ nodes. We have
$$
\varphi(\beta) = \eee^{\beta},
\quad
\lambda=\varphi(0)=1,
\quad
d=\varphi'(0)=1,
\quad
\tau^2=\varphi''(0)=1.
$$
From Theorem~\ref{prop:neininger_BRW_LLN} we obtain that there is a limit random variable $\IPL_{\infty}$ such that a.s.\ and in $L^p$ for every $p>0$,
\begin{equation}
\frac{\IPL_n - n \log n}{n} \ton \IPL_{\infty}.
\end{equation}
This recovers results of~\citet{mahmoud}, who proved a.s.\ and $L^2$-convergence; $L^p$-convergence for arbitrary $p>0$ has been shown by~\citet{dobrow_fill}, see~\citet{gruebel_mihailow} for a different approach. \citet{dobrow_fill} also obtained  a characterization of the
distribution of $\IPL_{\infty}$ in terms of a stochastic fixed-point equation, similar to R\"osler's result~\cite{roesler} for
the \textsc{Quicksort} distribution that we mentioned above.

From Theorem~\ref{prop:neininger_BRW_CLT} we obtain that on the probability space $(\Omega,\cF,\P)$,
\begin{equation}\label{eq:LRT_neininger1}
\cL\left\{
\sqrt{\frac{n}{\log n}} \left(\IPL_{\infty} - \frac{\IPL_n - n \log n}{n}\right)
\Big|\cG_{n}
\right\}
\toasw \{\omega\mapsto \Normal_{0,1}\}.
\end{equation}
In particular, we obtain an analogue of Neininger's CLT for random recursive trees:
\begin{equation}\label{eq:LRT_neininger2}
\sqrt{\frac{n}{\log n}} \left(\IPL_{\infty} - \frac{\IPL_n - n \log n}{n}\right)
\tomix \Normal_{0,1}.
\end{equation}
The results~\eqref{eq:LRT_neininger1} and~\eqref{eq:LRT_neininger2} seem to be new.
By~\cite[Prop.~2]{aldous_eagleson}, we  have the joint convergence
\begin{equation}\label{eq:LRT_neininger_joint}
\left(\sqrt{\frac{n}{\log n}} \left(\IPL_{\infty} - \frac{\IPL_n - n \log n}{n}\right), \IPL_{\infty}\right)
\todistr
(Z,\IPL_{\infty}),
\end{equation}
where $Z\sim \Normal_{0,1}$ is independent of $\IPL_{\infty}$.

\subsubsection{Trees and urns} It is well known that random trees of the type considered above are closely related to urn
models; for example, in~\citet{EGW1} the corresponding process boundaries were obtained by regarding the trees as nested P\'olya urns
of the type considered in Section~\ref{subsec:polya}.
Similarly, the process of node depth profiles of the external resp.\ internal
nodes in the case of binary search trees and random recursive trees is the same as the color
distribution process for a suitably chosen urn model with infinitely many colors: If the colors are numbered by the nonnegative
integers then we start at time $0$ with $1$ ball of color $0$ in both cases and proceed as follows. 
In the step from $n$ to $n+1$ we choose one of the then available $n+1$ balls uniformly at random; let $j$ be its color. In
the binary search tree case we then put back two balls with color $j+1$, in the recursive tree case we put back the
original ball and add one ball with color $j+1$.
Thus, our approach leads to results for a class of P\'olya type urn models with infinitely many colors.

\subsection{Conjectures: Laws of the iterated logarithm}
A central limit theorem is usually accompanied by a law of iterated logarithm (LIL). For example, the CLT for Galton--Watson processes~\cite{heyde} is accompanied by Heyde's LIL proved in~\cite{heyde_LIL}.

More generally, let a zero mean, $L^2$-bounded martingale $Z_n = \sum_{i=1}^n X_i$ be given. Denote by $Z_{\infty}$ the a.s.\ and $L^2$-limit of $Z_n$ and write $\sigma_n^2=\Var (Z_{\infty}-Z_n)\to 0$. \citet{heyde} provided sufficient conditions for the CLT  of the form
\begin{equation}\label{eq:martingale_tail_CLT}
\frac {Z_{\infty}-Z_n} {\sigma_n} \todistr \Normal_{0,S^2}.
\end{equation}
The most important of these conditions is this one: For some random variable $S^2$,
$$
\frac 1{\sigma_n^{2}} \sum_{i=n}^{\infty} X_i^2 \toprobab S^2.
$$
Under slightly stronger conditions, \citet{heyde} proved a law of the iterated logarithm of the form
\begin{equation}\label{eq:martingale_tail_LIL}
\limsup_{n\to\infty} \frac{Z_{\infty}-Z_n}{S \sqrt{2\sigma_n^2\log |\log \sigma_n|}} = 1.
\end{equation}
Comparing~\eqref{eq:martingale_tail_CLT} with~\eqref{eq:BST_CLT} suggests that in the setting of binary search trees with $Z_n$ being the R\'egnier martingale $\frac{\EPL_n-2n\log n}{n}$,  we should have $S=1$, $\sigma_n^2=\frac{2\log n}{n}$. So, in view of~\eqref{eq:martingale_tail_LIL},  it is natural to conjecture that in the setting of binary search trees the following LIL holds:
$$
\limsup_{n\to\infty} \frac{\sqrt n}{2\sqrt{\log n \log \log n}}\left(\EPL_{\infty} - \frac{\EPL_n-2n\log n}{n}\right)=1.
$$
An analogous conjecture can be stated for random recursive trees:
$$
\limsup_{n\to\infty} \frac{\sqrt {n}}{\sqrt{2\log n \log \log n}}\left(\IPL_{\infty} - \frac{\IPL_n- n\log n}{n}\right)=1.
$$
Similarly, the $\liminf$'s should be equal to $-1$.

\section{A moment estimate for the Biggins martingale} \label{sec:moment_estimate_Biggins}
The aim of this section is to prove that the Biggins martingale $W_n(\beta)$ is $L^p$-bounded uniformly in $|\beta|\leq \eps_0$, for some sufficiently small $\eps_0>0$.
\begin{proposition}\label{prop:Biggins_Martingale_Lp_bounded}
For every $0 < p \leq p_0$ there exist an $\eps_0>0$ and a constant $C>0$ such that for all $n\in\N$ and $\beta\in \overline \bD_{\eps_0}$ we have
$$
\E |W_n(\beta)|^p < C.
$$
\end{proposition}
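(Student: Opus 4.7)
The idea is to reduce everything to the $L^2$-boundedness guaranteed by the Uchiyama--Biggins theorem cited in Section~\ref{subsec:biggins_martingale} and to bootstrap to the range $2<p\le p_0$ via a $k$-step branching decomposition combined with Rosenthal's inequality. For $0<p\le 2$ the conclusion is exactly Uchiyama--Biggins, since Assumption~B with $p_0>2$ in particular implies the analogous moment assumption with exponent $2$ in a neighborhood of the origin.

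Fix now $p\in(2,p_0]$. For any integer $k\ge 1$ the branching structure yields the decomposition
\begin{equation*}
W_{n+k}(\beta) \;=\; W_k(\beta)\;+\;\sum_{z\in\pi_k}a_z\bigl(W_n^{(z)}(\beta)-1\bigr),\qquad a_z:=\frac{e^{\beta z}}{m(\beta)^k},
\end{equation*}
where $\{W_n^{(z)}\}_{z\in\pi_k}$ are i.i.d.\ copies of $W_n(\beta)$ independent of $\pi_k$. Conditioning on $\pi_k$ and applying Rosenthal's inequality to the centered i.i.d.\ sum with $\pi_k$-measurable coefficients $a_z$, then taking a further expectation, gives
\begin{equation*}
\E\Bigl|\sum_{z\in\pi_k}a_z(W_n^{(z)}-1)\Bigr|^{p}\;\le\;C_p\,\frac{m(p\Re\beta)^k}{|m(\beta)|^{kp}}\,\E|W_n-1|^p\;+\;C_p\,\frac{\E\bigl(\sum_{z\in\pi_k}e^{2\Re\beta z}\bigr)^{p/2}}{|m(\beta)|^{kp}}\bigl(\E|W_n-1|^2\bigr)^{p/2},
\end{equation*}
where the factor $\E|W_n-1|^2$ is uniformly bounded in $n$ and $\beta$ by the $L^2$-case handled above. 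The quantity $\E(\sum_{z\in\pi_k}e^{2\Re\beta z})^{p/2}$ is finite for fixed $k$ (by iterating Assumption~B and using $p/2\le p_0$) and a continuous function of $\beta$ near $0$.

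At $\beta=0$ the coefficient $C_p\,m(p\Re\beta)^k/|m(\beta)|^{kp}$ reduces to $C_p\,m^{k(1-p)}$, which tends to $0$ geometrically in $k$ since $p>1$ and $m>1$. Choosing $k$ so large that $C_p\,m^{k(1-p)}<1/2$ and then shrinking the disk to $\overline{\bD}_{\eps_0}$ by continuity, the elementary bound $(a+b)^p\le 2^{p-1}(a^p+b^p)$ applied to the decomposition of $W_{n+k}$ produces the contractive recursion
\begin{equation*}
\E|W_{n+k}(\beta)|^p \;\le\; A \;+\; \rho\,\E|W_n(\beta)|^p,\qquad \rho<1,
\end{equation*}
uniformly in $\beta\in\overline{\bD}_{\eps_0}$, with a constant $A$ that absorbs $\E|W_k(\beta)|^p$ (finite by iterated Assumption~B) and the $L^2$ contribution. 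Iterating along $n=0,k,2k,\ldots$ and using that $\E|W_j(\beta)|^p$ is finite and bounded in $\beta$ for each $j<k$ yields the desired uniform bound.

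The main obstacle is engineering contraction. The naive one-step version ($k=1$) of the recursion does not suffice, because the Rosenthal constant $C_p$ can easily dominate $m^{p-1}$. The whole point of the skeleton decomposition with $k$ large is to exploit the geometric decay of $m^{k(1-p)}$ in order to beat $C_p$, after which the $L^2$-boundedness inherited from Uchiyama--Biggins takes care of the remaining quadratic Rosenthal term.
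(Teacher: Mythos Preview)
Your argument is correct and uses the same essential tool as the paper (conditional application of Rosenthal's inequality to a branching decomposition), but the organisation is genuinely different.

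The paper decomposes the \emph{increments} $W_{n+1}(\beta)-W_n(\beta)$ as a sum over the particles in generation $n$ of i.i.d.\ copies of $W_1(\beta)-1$. Rosenthal then gives $\E|W_{n+1}-W_n|^p\le Ck^{pn}$ with some $k<1$, and summing the geometrically decaying increments yields the bound. The price is that the second Rosenthal term produces $\E|W_n(2\Re\beta)|^{p/2}$, which is only a priori controlled (via Biggins) when $p/2\le 2$; the paper therefore inducts on $p$ by doubling, first $p\in(2,4]$, then $(4,8]$, and so on.

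Your approach swaps the roles of ``fixed'' and ``growing'': you freeze a skeleton of depth $k$, sum over the particles in $\pi_k$, and let the summands be i.i.d.\ copies of $W_n-1$. The second Rosenthal term then involves only $\E|W_n-1|^2$, which is already uniformly bounded by the $L^2$ case, while the $p$-th moment term carries the factor $C_p\,m(p\Re\beta)^k/|m(\beta)|^{kp}$ that you make contractive by choosing $k$ large. This avoids the doubling induction entirely and handles all $p\in(2,p_0]$ in one pass; the cost is the extra bookkeeping of the $k$-step skeleton and the verification that $\E|W_j(\beta)|^p$ and $\E(\sum_{z\in\pi_k}e^{2\Re\beta\,z})^{p/2}$ are finite for each fixed $j,k$, which you correctly justify by iterating Assumption~B.
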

\begin{remark} 
\citet{biggins_uniform} proved this  result for $p\in (1,2]$ using the von Bahr--Esseen inequality~\cite{von_bahr_esseen}. For the case $2\leq p\leq p_0$ we will use the Rosenthal inequality~\cite{rosenthal}. It states that for $p\geq 2$ and any independent random variables $X_1,\ldots,X_n\in L^p$ with zero mean we have
\begin{equation}\label{eq:rosenthal_ineq}
\E |X_1+\ldots+X_n|^p \leq K_p \left(\sum_{j=1}^n \E |X_j|^p + \left(\sum_{j=1}^n \E |X_j|^2\right)^{p/2}\right),
\end{equation}
where $K_p$ is a constant depending only on $p$.
\end{remark}
\begin{proof}[Proof of Proposition~\ref{prop:Biggins_Martingale_Lp_bounded}]
Let $2\leq p\leq p_0$. Decomposing the particles in the $(n+1)$-st generation of the BRW into clusters according to their predecessor $z_{j,n}$, $j=1,\ldots,N_n$, in the $n$-th generation, we obtain
$$
W_{n+1}(\beta) - W_n(\beta) =  \sum_{j=1}^{N_n} \frac{\eee^{\beta z_{j,n}}}{m(\beta)^{n}} g_{j,n}(\beta),
$$
where $g_{1,n}(\beta),g_{2,n}(\beta)\ldots$ are i.i.d.\ copies of $W_1(\beta)-1$ which are also independent of the $\sigma$-algebra $\cF_n$ generated by the first $n$ generations of the BRW.
By Jensen's inequality and~\eqref{eq:standing_assumption} we have the estimate, valid for all $\beta\in \C$ with $|\Re \beta|<\beta_0$,
\begin{equation}\label{eq:g_1_n_beta_moment}
\E |g_{1,n}(\beta)|^p \leq 2^{p-1} (1 + \E |W_1(\beta)|^p) \leq C+ C\E\left(\sum_{z\in \pi_1}\eee^{(\Re \beta) z}\right)^p \leq C.
\end{equation}
Noting that the random variables $\eee^{\beta z_{j,n}}$ and $N_n$ are $\cF_n$-measurable, $\E g_{j,n}(\beta)=0$,  and applying the Rosenthal inequality to the conditional distributions, we obtain
$$
\E \Big[|W_{n+1}(\beta) - W_n(\beta)|^p \Big|\cF_n\Big]
= \E \left[\left| \sum_{j=1}^{N_n} \frac{\eee^{\beta z_{j,n}}}{m(\beta)^{n}} g_{j,n}(\beta)\right|^p  \Big| \cF_n\right]
\leq K_p (A_n(\beta) +B_n(\beta)),
$$
where $A_n(\beta)$ and $B_n(\beta)$ are two terms (corresponding to the two sums on the right-hand side of~\eqref{eq:rosenthal_ineq}) which will be estimated below.  The term $A_n(\beta)$ is given by
$$
A_n(\beta) = \sum_{j=1}^{N_n} \frac{\eee^{p (\Re \beta) z_{j,n} }}{|m(\beta)|^{pn}} \E |g_{1,n}(\beta)|^p
\leq
C \left(\frac{m(p\Re \beta)}{|m(\beta)|^{p}}\right)^n W_n(p\Re \beta).
$$
where we used~\eqref{eq:biggins_martingale_def} and~\eqref{eq:g_1_n_beta_moment}. The term $B_n(\beta)$ is given by
$$
B_n(\beta)
=
\left(\sum_{j=1}^{N_n} \frac{\eee^{(2 \Re \beta) z_{j,n} }}{|m(\beta)|^{2n}} \E |g_{1,n}(\beta)|^2\right)^{p/2}
\leq
C \left(\frac{m(2\Re \beta)^{1/2}}{|m(\beta)|}\right)^{pn} |W_n(2\Re \beta)|^{p/2},
$$
where we again used~\eqref{eq:biggins_martingale_def} and the estimate $\E |g_{1,n}(\beta)|^2 <C$ following from~\eqref{eq:g_1_n_beta_moment}. We can choose $\eps_0>0$ so small that for all $|\beta|<\eps_0$,
$$
\frac{m(p\Re \beta)}{|m(\beta)|^{p}} <k <1,
\quad
\frac{m(2\Re \beta)^{1/2}}{|m(\beta)|} <k<1.
$$
Indeed, as $\beta\to 0$, the terms on the left-hand side converge to $m^{1-p}$ and $m^{-p/2}$ which are both smaller than $1$ by the supercriticality assumption $m>1$.
Now, we can estimate the expectation of $A_n(\beta)$  and $B_n(\beta)$ as follows:
\begin{align*}
\E [A_n(\beta)] &\leq C k^n \E W_n(p\Re \beta) =Ck^n,\\
\E [B_n(\beta)] &\leq C k^{pn}  \E |W_n(2\Re \beta)|^{p/2}
\leq
Ck^{pn},
\end{align*}
where in the last step we assumed that $p\in (2,4]$ and used the Biggins~\cite{biggins_uniform} estimate $\E |W_n(2\Re \beta)|^{p/2}<C$ valid for sufficiently small $\eps_0>0$ and all $|\beta| \leq \eps_0$. We obtain that for all $n\in\N$,
$$
\E \Big[|W_{n+1}(\beta) - W_n(\beta)|^p \Big] \leq C k^{pn},
$$
which implies the required bound $\E |W_n(\beta)|^p \leq C$ for $p\in (2,4]$.

Now, it is easy to drop the assumption on $p\leq 4$ inductively: If the statement was established for $p\in (2^{k-1}, 2^{k}]$, then one can repeat the above argument to obtain it for $p\in (2^{k}, 2^{k+1}]$.
\end{proof}
\begin{remark}
It is straightforward to state a continuous-time analogue of Proposition~\ref{prop:Biggins_Martingale_Lp_bounded}, just replace $n\in\N$ by $t\geq 0$.  The continuous-time case can be handled by considering a discrete skeleton of the process in the same way as in~\cite{biggins_uniform}.
\end{remark}

\section{Proof of the Functional Central Limit Theorem}\label{sec:proof_FCLT}
The aim of this section is to prove Theorem~\ref{thm:mainlim}. The main idea is a decomposition of $W_{\infty}(\beta) - W_{T_n}(\beta)$ stated in~\eqref{eq:basic_decomposition}, below. Similar decompositions appeared in the proof of the CLT for Galton--Watson processes and in the work of~\citet{roesler_topchii_vatutin1}.

\subsection{The basic decomposition}
Let $l\in \N_0$ be fixed. By the Markov property, the behavior of any particle after time $T_n$ depends only on the position of this particle at time $T_n$ but otherwise not on the behavior of the BRW before time $T_n$. In particular, for all $l\in\N$,
$$
m(\beta)^{T_n} W_{T_n+l}(\beta) = \sum_{j=1}^{N_{n}'} \eee^{\beta z_{j, T_n}} W_{j,T_n}^{(l)}(\beta),
$$
where $N_n':=N_{T_n}$ denotes the number of particles at time $T_n$, and $W_{j,T_n}^{(l)}(\beta)$, $j=1,\ldots,N_n'$, are i.i.d.\ random analytic functions (independent of the $\sigma$-algebra $\cF_{T_n}$) with the same distribution as $W_l(\beta)$. Note that these random analytic functions are defined on the same probability space as the BRW. Letting $l\to\infty$ while keeping $n$ fixed, we obtain
$$
m(\beta)^{T_n} W_{\infty} (\beta) = \sum_{j=1}^{N_{n}'} \eee^{\beta z_{j, T_n}} W_{j, T_n}(\beta),
$$
where $W_{j,T_n}$ is the a.s.\ limit of $W_{j,T_n}^{(l)}$ as $l\to\infty$; see~\eqref{eq:biggins_martingale_converges}.  Subtracting from both sides $m(\beta)^{T_n} W_{T_n}(\beta)$, we obtain the \textit{basic decomposition}
\begin{equation}\label{eq:basic_decomposition}
m(\beta)^{T_n} (W_{\infty} (\beta)- W_{T_n} (\beta)) = \sum_{j=1}^{N_{n}'} \eee^{\beta z_{j, T_n}} (W_{j, T_n}(\beta)-1).
\end{equation}
In the rest of the proof we exploit the fact that the summands on the right-hand side of~\eqref{eq:basic_decomposition} are  conditionally independent given the $\sigma$-algebra $\cF_{T_n}$. Essentially, we will prove that conditionally on $\cF_{T_n}$ it is possible to apply the Lyapunov CLT to these summands.

\begin{remark}\label{rem:why_variance_const}
At this point we can explain why the variance of the limiting normal distribution is random in the CLT for Galton--Watson processes~\eqref{eq:heyde_clt} and constant in Neininger's CLT~\eqref{eq:QS2}. In~\eqref{eq:heyde_clt} we observe a Galton--Watson process at the fixed time $T_n=n$, so that the number of summands in~\eqref{eq:basic_decomposition} is random, and this randomness persists in the large $n$ limit. In Neininger's CLT~\eqref{eq:QS2}, we consider a binary search tree with $n$ nodes meaning that the time $T_n$ is such that $N_n'=n$. So, the number of summands in~\eqref{eq:basic_decomposition} is deterministic and there is no reason for the limiting variance to be random.
\end{remark}

\subsection{The conditional distribution}
Recalling the formula for $D_{T_n}(u)$, see~\eqref{eq:D_n_u_def2}, we obtain the representation
\begin{equation}
D_{T_n}(u) = \sum_{j=1}^{N_{n}'} a_{j,n}(u) \left(W_{j, T_n}\left(\frac u {\sqrt {T_n}}\right)-1\right)
\end{equation}
where
\begin{equation}\label{eq:a_j_n_def}
a_{j,n}(u) =  m^{\frac 12 T_n} m\left(\frac u{\sqrt {T_n}}\right)^{-T_n}\eee^{\frac{u}{\sqrt {T_n}} z_{j, T_n}}.
\end{equation}

We regard the random analytic function $D_{T_n}$  as a random element with values in the Banach algebra $\bA_R$.  Note that the random analytic functions $a_{j,n}$ and the random variables $T_n$, $N_n'$ (``the past'') are $\cF_{T_n}$-measurable, while the random analytic functions $W_{j, T_n}$ (``the future'') are independent of $\cF_{T_n}$ by the Markov property. All these random objects are defined on the same probability space, say $(\Omega, \cF, \P)$, as the branching random walk. We will write $a_{j,n}(u;\omega)$, $T_n(\omega)$, $N_n'(\omega)$ if we want to stress the dependence of these random elements on $\omega\in\Omega$.

We are interested in the conditional distribution $\cL(D_{T_n}|\cF_{T_n})$  of $D_{T_n}$ given the $\sigma$-algebra $\cF_{T_n}$. To describe it, it will be convenient to ``decouple'' the ``future'' from the ``past'' by introducing independent random analytic functions $w_{j,n}(\cdot)$, $j\in\N$, which have the same law as $W_{j, T_n}(\cdot)-1$ (equivalently: the same law as $W_{\infty}(\cdot)-1$), but which are defined on a different probability space, say $(\Omega_*, \cF_*, \P_*)$.   With this notation, the conditional law $\cL(D_{T_n}|\cF_{T_n})$ is given by the kernel
\begin{equation}
Q_n:\Omega\to \cM_1(\bA_R),\quad  \omega \mapsto \cL_*(S_n(u; \omega)), \quad \omega\in\Omega,
\end{equation}
where $\cL_*$ denotes the law w.r.t.\ the probability measure $\P_*$, and $S_n(u;\omega)$ is a ``decoupled'' version of $D_{T_n}$ given by
\begin{equation}\label{eq:S_n_u_omega}
S_n(u;\omega):=\sum_{j=1}^{N_{n}'(\omega)}  a_{j,n}(u;\omega)\, w_{j,n}\left(\frac u{\sqrt{T_n(\omega)}}\right),
\quad u\in \bD_R.
\end{equation}
Keeping $\omega\in \Omega$ fixed,  we  regard $S_n(u;\omega)$ as a random element, defined on the probability space $(\Omega_*,\cF_*, \P_*)$ and taking values in $\bA_R$. For any fixed $\omega\in\Omega$, decomposition~\eqref{eq:S_n_u_omega} provides  a representation of $S_n(u;\omega)$ as a sum of independent (but not identically distributed) random elements defined on $(\Omega_*, \cF_*,\P_*)$.
Our aim is to show that for $\P$-a.a.\ $\omega\in \Omega_0$, $S_n(u;\omega)$ satisfies a central limit theorem in the sense that weakly on $\bA_R$,
\begin{equation}\label{eq_S_n_to_S_infty}
S_n(u; \omega) \toweak S_{\infty}(u;\omega),
\end{equation}
where the limit is defined as follows:
\begin{equation}
S_{\infty}(u;\omega) = \sigma\, \sqrt{N_\infty(\omega)}\,  \xi\bigl(\tau u\bigr).
\end{equation}
Here, $\xi$ is as in Section~\ref{subsec:GAF}.
Let $\Omega_0\subset \Omega$ be the set of all $\omega\in\Omega$ for which the conditions
\begin{align}
&\lim_{n\to\infty} T_n(\omega) =+\infty, \label{eq:lim_T_n_infty1}\\
&\lim_{n\to \infty} \sup_{|\beta|<\delta_0} |W_{\infty}(\beta) - W_{T_n}(\beta)|=0\label{eq:biggins_martingale_converges1}
\end{align}
are satisfied, cf.\ \eqref{eq:biggins_martingale_converges} and~\eqref{eq:lim_T_n_infty}.
Clearly, $\P[\Omega_0]=1$.   For the rest of the proof of Theorem~\ref{thm:mainlim}
\begin{center}
\textit{we keep $\omega\in\Omega_0$  fixed.}
\end{center}
The probability space $(\Omega_*,\cF_*,\P_*)$ is the only remaining source of randomness.
The proof of~\eqref{eq_S_n_to_S_infty} will be divided into two parts: convergence of finite-dimensional distributions (Section~\ref{subsec:fdd_conv})  and tightness (Section~\ref{subsec:tightness}).

\subsection{Convergence of finite-dimensional distributions}\label{subsec:fdd_conv}
Fix some $u_1,\ldots,u_d\in \C$. Our aim is to prove that
$$
(S_n(u_1; \omega),\ldots, S_n(u_d; \omega)) \tofd (S_{\infty}(u_1;\omega),\ldots, S_{\infty}(u_d;\omega)).
$$
This is done by verifying the conditions of the Lyapunov central limit theorem for the decomposition~\eqref{eq:S_n_u_omega}. We can treat $a_{j,n}(u;\omega)$, $N_n'(\omega)$, $T_n(\omega)$ as deterministic, while $w_{j,n}$ are considered as $\bA_R$-valued random elements defined on the probability space $(\Omega_*,\cF_*,\P_*)$.

\vspace*{2mm}
\noindent
\textit{Step 1: Convergence of covariances.} Take some $u,v\in\C$. We show that 
\begin{align}
&\lim_{n\to\infty} \E_*[S_n(u;\omega) S_n(v;\omega)] = \sigma^2\, N_{\infty}(\omega)\, \eee^{\tau^2 uv},\label{eq:conv_cov_statement_1}\\
&\lim_{n\to\infty} \E_*[S_n(u;\omega) \overline{S_n(v;\omega)}] = \sigma^2\, N_{\infty}(\omega)\, \eee^{\tau^2 u\bar {v}}. \label{eq:conv_cov_statement_2}
\end{align}
Here, $\E_*$ denotes the expectation operator w.r.t.\ the probability measure $\P_*$. We prove only~\eqref{eq:conv_cov_statement_1} since the proof of~\eqref{eq:conv_cov_statement_2} is analogous. Since $a_{j,n}(u)$ and $a_{j,n}(v)$ are deterministic, we have
$$
\E_*[S_n(u) S_n(v)] = \left(\sum_{j=1}^{N_n'} a_{j,n}(u) a_{j,n}(v)\right)  \E_*\left[w_{j,n}\left(\frac{u}{\sqrt{T_n}}\right)w_{j,n}\left(\frac{u}{\sqrt{T_n}}\right)\right]
$$
The proof of~\eqref{eq:conv_cov_statement_1} will be accomplished after we have shown that 
\begin{align}
&\lim_{n\to\infty} \sum_{j=1}^{N_n'} a_{j,n}(u) a_{j,n}(v)  = N_{\infty}\, \eee^{\tau^2
 uv},\label{eq:cov_conv_1}\\
&\lim_{n\to\infty} \E_*\left[w_{1,n}\left(\frac{u}{\sqrt{T_n}}\right)w_{1,n}\left(\frac{v}{\sqrt{T_n}}\right)\right] = \sigma^2. \label{eq:cov_conv_2}
\end{align}

\vspace*{2mm}
\noindent
\textit{Proof of~\eqref{eq:cov_conv_1}.}
Using first the definition of $a_{j,n}$, see~\eqref{eq:a_j_n_def}, and then the uniformity in~\eqref{eq:biggins_martingale_converges},  we obtain that
\begin{align*}
\sum_{j=1}^{N_n'} a_{j,n}(u) a_{j,n}(v)
&=
\eee^{T_n\left(\varphi(0) - \varphi\left(\frac u{\sqrt{T_n}}\right) - \varphi\left(\frac v{\sqrt{T_n}}\right)\right)}
\sum_{j=1}^{N_n'} \eee^{\frac{u+v}{\sqrt n} z_{j,T_n}}\\
&\sim
N_{\infty}\eee^{T_n\left(\varphi(0) - \varphi\left(\frac u{\sqrt{T_n}}\right) - \left(\frac v{\sqrt{T_n}}\right) + \varphi\left(\frac{u+v}{\sqrt{T_n}}\right)\right)}.
\end{align*}
Expanding $\varphi$ into a Taylor series at $0$, we obtain~\eqref{eq:cov_conv_1}.

\vspace*{2mm}
\noindent
\textit{Proof of~\eqref{eq:cov_conv_2}.}
Recall that $\lim_{n\to\infty} T_n = +\infty$. Since $w_{1,n}$ has the same law as $W_{\infty}-1$ and as such is continuous at $0$, we have, $\P_*$-a.e.,\
\begin{equation}
\lim_{n\to\infty} w_{1,n}\left(\frac{u}{\sqrt {T_n}}\right) w_{1,n}\left(\frac v{\sqrt{T_n}}\right) = w_{1,n}^2(0).
\end{equation}
We have to prove the uniform integrability in order to be able to conclude the convergence of expectations. By Proposition~\ref{prop:Biggins_Martingale_Lp_bounded},
\begin{equation}\label{eq:w_j_n_bounded_L_p}
\E\left|w_{1,n}\left(\frac{u}{\sqrt {T_n}}\right)\right|^{2+\delta} < C,
\quad
\E\left|w_{1,n}\left(\frac{v}{\sqrt {T_n}}\right)\right|^{2+\delta} < C,
\end{equation}
where $C=C(\omega)$ may depend on $\omega$.
By the Cauchy--Schwarz inequality, the sequence $w_{1,n}(u/\sqrt{T_n}) w_{1,n}(v/\sqrt{T_n})$  is bounded in $L^{1+\frac \delta 2}(\Omega_*,\cF_*,\P_*)$, which implies that it is  uniformly integrable.
It follows from~\eqref{eq:cov_conv_2} that
$$
\lim_{n\to\infty} \E_* \left[w_{1,n}\left(\frac{u}{\sqrt {T_n}}\right) w_{1,n}\left(\frac v{\sqrt{T_n}}\right)\right] = \E_* [w_{1,n}^2(0)] = \Var W_{\infty}(0) = \sigma^2,
$$
where in the last step we used that under $\P_*$ the random variable $w_{1,n}(0)$ has the same distribution as the random variable $W_{\infty}(0)-1=N_{\infty}-1$ under $\P$.


\vspace*{2mm}

\noindent
\textit{Step 2: Lyapunov condition.} We  verify that for every $u\in \C$, the Lyapunov condition $\lim_{n\to\infty} R_n(u)=0$ holds, where
$$
R_n(u)
= \sum_{j=1}^{N_n'} \E_*\left|a_{j,n}(u) w_{j,n}\left(\frac{u}{\sqrt{T_n}}\right)\right|^{2+\delta}.
$$
Using~\eqref{eq:w_j_n_bounded_L_p} and recalling the definition of $a_{j,n}$, see~\eqref{eq:a_j_n_def}, we obtain
\begin{align*}
R_n(u)
\leq
C \sum_{j=1}^{N_n'} \left|a_{j,n}(u)\right|^{2+\delta}
=
C \eee^{T_n\left(\frac{2+\delta}{2} \varphi(0) - (2+\delta) \varphi\left(\frac{\Re u}{\sqrt{T_n}}\right)\right)} \sum_{j=1}^{N_n'} \eee^{(2+\delta)(\Re u)\frac{z_{j,T_n}}{\sqrt{T_n}}}.
\end{align*}
Using~\eqref{eq:biggins_martingale_converges1} we obtain that uniformly in $u\in \bD_R$,
\begin{align*}
R_n(u)
\leq
C N_{\infty} \eee^{T_n\left(\frac{2+\delta}{2} \varphi(0) - (2+\delta) \varphi\left(\frac{\Re u}{\sqrt{T_n}}\right) + \varphi\left(\frac{(2+\delta)\Re u}{\sqrt{T_n}}\right)\right)}.
\end{align*}
Expanding $\varphi$ into a Taylor series at $0$, we obtain the estimate
$$
R_n(u) \leq C N_{\infty} \eee^{- \left(\frac {\delta}{2} + o(1) \right)T_n}.
$$
This completes the verification of the Lyapunov condition.

\subsection{Tightness}\label{subsec:tightness}
We prove that for every $\omega\in\Omega_0$, the sequence of random analytic functions $S_n(u;\omega)$, $n\in\N$, is tight on $\bA_R$.
\begin{lemma}\label{lem:S_n_variance_estimate}
Fix $R>0$. There exist  random variables $M:\Omega\to\R$ and $N:\Omega\to \N$ such that for all $\omega\in\Omega_0$, $n>N(\omega)$, $u\in \bD_{R}$,
\begin{equation}\label{eq:tightness_shirai}
\E_* |S_n(u;\omega)|^2 \leq M(\omega).
\end{equation}
\end{lemma}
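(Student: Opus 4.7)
The plan is to exploit the fact that under $\P_*$, the random element $S_n(u;\omega)$ is a sum of $N_n'(\omega)$ independent, zero-mean random variables with $\cF_{T_n}$-measurable (hence, for fixed $\omega$, deterministic) coefficients $a_{j,n}(u;\omega)$. To verify the zero-mean property, I would argue that by Proposition~\ref{prop:Biggins_Martingale_Lp_bounded} the martingale $W_n(\beta)$ is $L^{p_0}$-bounded with $p_0>2$ uniformly for $|\beta|\le\eps_0$, hence uniformly integrable, so $W_n(\beta)\to W_\infty(\beta)$ in $L^1$ and $\E W_\infty(\beta)=1$; consequently each $w_{j,n}(u/\sqrt{T_n})$ has mean zero under $\P_*$. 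Pairwise independence together with zero mean then yields the variance factorisation
\begin{equation*}
\E_* |S_n(u;\omega)|^2 = \E_*\!\left|w_{1,n}\!\left(\tfrac{u}{\sqrt{T_n(\omega)}}\right)\right|^2 \sum_{j=1}^{N_n'(\omega)} |a_{j,n}(u;\omega)|^2.
\end{equation*}

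Next I would bound each factor uniformly in $u$. Choose $N(\omega)$ so large that $T_n(\omega) > R^2/\min(\eps_0,\delta_0)^2$ for all $n>N(\omega)$, which is possible by~\eqref{eq:lim_T_n_infty1}; then $|u/\sqrt{T_n}|\le \eps_0\wedge\delta_0$ for every $u\in\bD_R$. Proposition~\ref{prop:Biggins_Martingale_Lp_bounded} with $p=2$ immediately gives the uniform bound $\E_*|w_{1,n}(u/\sqrt{T_n})|^2 \le C$ for all $u\in\bD_R$.

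For the coefficient sum I would use the explicit form~\eqref{eq:a_j_n_def} of $a_{j,n}$ together with the definition~\eqref{eq:biggins_martingale_def} of the Biggins martingale to write
\begin{equation*}
\sum_{j=1}^{N_n'} |a_{j,n}(u;\omega)|^2 = \exp\!\Bigl(T_n\bigl[\varphi(0)-2\Re\varphi(\tfrac{u}{\sqrt{T_n}})+\varphi(\tfrac{2\Re u}{\sqrt{T_n}})\bigr]\Bigr)\, W_{T_n}\!\left(\tfrac{2\Re u}{\sqrt{T_n}}\right).
\end{equation*}
A Taylor expansion of $\varphi$ at $0$ — in which the constant and linear contributions cancel, while the quadratic contribution equals $\tau^2\bigl[2(\Re u)^2-\Re(u^2)\bigr]/T_n=\tau^2|u|^2/T_n$ — yields
\begin{equation*}
T_n\bigl[\varphi(0)-2\Re\varphi(\tfrac{u}{\sqrt{T_n}})+\varphi(\tfrac{2\Re u}{\sqrt{T_n}})\bigr] = \tau^2|u|^2 + O(T_n^{-1/2})
\end{equation*}
uniformly in $u\in\bD_R$, so the exponential prefactor is bounded by $\eee^{\tau^2 R^2+1}$ after enlarging $N(\omega)$ if necessary. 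Finally, the uniform convergence~\eqref{eq:biggins_martingale_converges1} gives, for $\omega\in\Omega_0$ and $n>N(\omega)$,
\begin{equation*}
\sup_{u\in\bD_R}\bigl|W_{T_n}(2\Re u/\sqrt{T_n})\bigr| \le K(\omega) := 1+\sup_{|\beta|\le\delta_0}|W_\infty(\beta;\omega)|,
\end{equation*}
and $K(\omega)$ is $\P$-a.s.\ finite since $W_\infty$ is continuous on the compact disk $\overline{\bD}_{\delta_0}$.

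Combining the three bounds delivers $\E_*|S_n(u;\omega)|^2 \le M(\omega):=C\,\eee^{\tau^2 R^2+1}\,K(\omega)$ for all $n>N(\omega)$ and $u\in\bD_R$, as required. I do not anticipate a serious obstacle: the argument is essentially a second-moment computation, and the only delicate point is uniformity of all estimates in $u$ on the compact disk $\overline{\bD}_R$, which is automatic from the uniform convergence in~\eqref{eq:biggins_martingale_converges1} and the uniform-in-$\beta$ moment bound in Proposition~\ref{prop:Biggins_Martingale_Lp_bounded}.
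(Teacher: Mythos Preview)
Your argument is correct and essentially identical to the paper's own proof: both use variance additivity under $\P_*$, the uniform second-moment bound from Proposition~\ref{prop:Biggins_Martingale_Lp_bounded}, the rewriting of $\sum_j|a_{j,n}(u)|^2$ in terms of $W_{T_n}(2\Re u/\sqrt{T_n})$, and a Taylor expansion of $\varphi$ at $0$ to control the exponential prefactor. Your version is slightly more explicit about the quadratic term $\tau^2|u|^2$ and about bounding $W_{T_n}$ via $W_\infty$ through~\eqref{eq:biggins_martingale_converges1}, but the structure is the same (one cosmetic point: enlarge $N(\omega)$ so that $2\Re u/\sqrt{T_n}$, not just $u/\sqrt{T_n}$, lands in $\bD_{\delta_0}$).
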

The required tightness can be now established as follows. A result of~\citet{shirai} (see Lemma~2.6 in~\cite{shirai} and the remark thereafter)  states that if $f_1,f_2,\ldots$ are random analytic functions defined on the disk $\bD_{2R}$ such that for some $q>0$, $C>0$ and all $n\in \N$, $u\in\bD_{2R}$, we have $\E |f_n(u)|^q<C$, then the sequence $f_n$ is tight on the space of analytic functions on the smaller disk $\overline \bD_{R}$. Since Lemma~\ref{lem:S_n_variance_estimate} holds with $R$ replaced by $2R$, the result of Shirai implies that for every $\omega\in \Omega_0$, the sequence $S_n(u;\omega)$, $n\in\N$, is  tight on the space of analytic functions on the disc $\overline \bD_{R}$. 

\begin{proof}[Proof of Lemma~\ref{lem:S_n_variance_estimate}]
For every $\omega\in\Omega_0$ we have $\lim_{n\to\infty} T_n(\omega)=+\infty$ and hence, we can choose a large enough $N(\omega)$ such that for all $n>N(\omega)$ the argument of the function $w_{j,n}$ in the definition of $S_n(u;\omega)$, see~\eqref{eq:S_n_u_omega}, is small enough so that $S_{n}(u;\omega)$ is well-defined for all $u\in \bD_R$.

Fix some $\omega\in \Omega_0$ and let in the sequel $n>N(\omega)$. Note that $\E_* S_n(u;\omega)=0$. Using the additivity of the variance and~\eqref{eq:w_j_n_bounded_L_p} we obtain that for some $C_1=C_1(\omega)$ and all $n>N(\omega)$,
$$
\E_* |S_n(u)|^2
=
\sum_{j=1}^{N_n'} |a_{j,n}(u)|^2 \, \E_* \left|w_{j,n}\left(\frac{u}{\sqrt{T_n}}\right)\right|^2
\leq
C_1 \sum_{j=1}^{N_n'} |a_{j,n}(u)|^2.
$$
Recalling the definition of $a_{j,n}$, see~\eqref{eq:a_j_n_def},  and using~\eqref{eq:biggins_martingale_converges1}, we obtain that
\begin{align*}
\E_* |S_n(u)|^2
&\leq C_1 \eee^{T_n\left(\varphi(0)-2\Re \varphi\left(\frac{u}{\sqrt{T_n}}\right)\right)} \sum_{j=1}^{N_n'} \eee^{\frac{2(\Re u) z_{j,T_n}}{\sqrt{T_n}}}\\
& = C_1 \eee^{ T_n \left(\varphi(0)-2\Re \varphi\left(\frac{u}{\sqrt{T_n}}\right)+ \varphi\left(\frac{2\Re u}{\sqrt{T_n}}\right) \right)} W_{T_n}\left(\frac{2\Re u}{\sqrt{T_n}}\right).
\end{align*}
Expanding $\varphi$ into a Taylor series at $0$, we see that the argument of the exponential function can be estimated by $C_2=C_2(\omega)$. Also, for all $\omega\in\Omega_0$,
$$
\lim_{n\to\infty} W_{T_n}\left(\frac{2\Re u}{\sqrt{T_n}};\omega\right) = W_{\infty}(0;\omega),
$$
thus proving~\eqref{eq:tightness_shirai}.
\end{proof}

\section{Proofs of the random tree results}\label{sec:proof_random_tree}
This section contains depoissonization arguments justifying the passage from BRW to random trees.

\subsection{Proof of Theorem~\ref{prop:neininger_BRW_LLN}}\label{subsec:prop:neininger_BRW_LLN_proof}
Recall that
\begin{equation}\label{eq:L_T_n_recall}
L_{T_n} = \frac{S_{T_n} - dn T_n }{\eee^{\lambda T_n}},
\quad
\tilde L_{T_n} = \frac{S_{T_n}-\frac{d}{\lambda}n\log n} {n},
\quad
\tilde L_{\infty} = \frac{L_{\infty}}{N_{\infty}} - \frac{d}{\lambda} \log N_{\infty}.
\end{equation}
We are going to show that on the product probability space $(\overline \Omega, \overline \cF, \overline \P)$  it holds that $\tilde L_{T_n}\to \tilde L_{\infty}$ a.s.\ and in $L^p$ for all $p>0$.

\vspace*{2mm}
\noindent
\textit{Step 1: Proof of the a.s.\ convergence.} Let us show that $\tilde L_{T_n}\to \tilde L_{\infty}$ a.s. By~\eqref{eq:L_T_n_recall},
\begin{equation}\label{eq:tech2}
\tilde L_{T_n} = L_{T_n} \frac{\eee^{\lambda T_n}}{n} + \frac d{\lambda}\left(\lambda T_n -\log n\right).
\end{equation}
By Proposition~\ref{prop:L_n_conv_L_infty_stop_times} (in the continuous-time version) we have
\begin{equation}\label{eq:aux_L_T_n}
L_{T_n}\toas L_{\infty}.
\end{equation}
The a.s.\ convergence of the martingale $\frac{N_t}{\eee^{\lambda t}}$ to  $N_{\infty}$ as $t\to+\infty$ implies, with $t=T_n$, that
\begin{equation}
\frac{n}{\eee^{\lambda T_n}} \toas N_{\infty},
\quad
\lambda T_n =  \log n - \log N_{\infty} + o(1)
\quad \text{ a.s.} \label{eq:aux_as0}
\end{equation}
Inserting~\eqref{eq:aux_L_T_n} and~\eqref{eq:aux_as0} into~\eqref{eq:tech2} yields that $\tilde L_{T_n}\to \tilde L_{\infty}$ a.s.

Since $\tilde L_{T_n}$ depends only on $\omega\in\Omega$ (and not on $\omega'\in\Omega'$), the same is true for the limit random variable $\tilde L_\infty$. Hence,  we can regard $\tilde L_{T_n}$ and $\tilde L_\infty$ as random variables on the probability space  $(\Omega,\cF,\P)$, and the convergence $\tilde L_{T_n}\to \tilde L_\infty$ holds on this probability space as well.

\vspace*{2mm}
In the next two steps we prove that $\tilde L_{T_n}\to \tilde L_{\infty}$ in $L^p(\overline\Omega,\overline \cF,\overline\P)$ for every $p>0$. In fact, by the Vitali convergence theorem, it suffices to prove that the sequence $\tilde L_{T_n}$ is bounded in $L^p$ for every $p>0$.

\vspace*{2mm}
\noindent
\textit{Step 2: Proof that $L_{T_n}^*$ is bounded in $L^p$.}
Consider first
$$
L_{T_n}^*:=\frac{S_{T_n} - \frac{d}{\lambda} n\log n}{\eee^{\lambda T_n}}
=
L_{T_n} + \frac{d}{\lambda} \frac{n}{\eee^{\lambda T_n}} (\lambda T_n - \log n).
$$
By Proposition~\ref{prop:L_n_conv_L_infty_stop_times} we know that $L_{T_n}$ is bounded in $L^p$. By the Minkowski and H\"older inequalities it suffices to show that for some $C_p>0$ depending only on $p>0$,
\begin{equation}\label{eq:moments_T_n}
   \E \left(\frac{n}{\eee^{\lambda T_n}}\right)^p <C_p,\qquad
   \E |\lambda T_n - \log n|^p <C_p.
\end{equation}
Recall that $T_n$ is the time at which the $n$-th particle is born in a Yule process with intensity $\lambda$. This means that
$$
E_k := \lambda k (T_{k+1}-T_{k}), \quad k\in\N,
$$
are i.i.d.\ exponential random variables with parameter $1$.
We have the representation
\begin{equation}\label{eq:rep_T_n}
\lambda T_n = \sum_{k=1}^{n-1} \frac{E_k}{k}.
\end{equation}
It follows that for every $r>-1$,
\begin{equation}\label{eq:moments_T_n_1}
\E \left(\frac{n}{\eee^{\lambda T_n}}\right)^r = n^r\prod_{k=1}^{n-1} \frac{1}{1+ \frac r k} \ton \Gamma(r+1).
\end{equation}
This implies the first estimate in~\eqref{eq:moments_T_n}. Also, for any $0<\eps<1$ we have
$$
\E |\lambda T_n - \log n|^p \leq C \E \left(\frac{n}{\eee^{\lambda T_n}}\right)^{\eps} + C\E \left(\frac{n}{\eee^{\lambda T_n}}\right)^{-\eps} < C_p.
$$
This proves the second estimate in~\eqref{eq:moments_T_n}.

\vspace*{2mm}
\noindent
\textit{Step 3: Proof that $\tilde L_{T_n}$ is bounded in $L^p$.}
We proved that the sequence $L^*_{T_n}$ is bounded in $L^p$, but we need a similar statement for the sequence $\tilde L_{T_n}$. Note that the random variables $S_{T_n}$ and $T_n$ are independent.  We have, by Step~2,
$$
C_p > \E |L_{T_n}^*|^p = \E \left|\tilde L_{T_n} \frac{n}{\eee^{\lambda T_n}}\right|^p =
\E |\tilde L_{T_n}|^p \, \E \left(\frac{n}{\eee^{\lambda T_n}}\right)^p > c_p \E |\tilde L_{T_n}|^p,
$$
where $c_p>0$ and the last inequality is by~\eqref{eq:moments_T_n_1}. Hence, the sequence $\E |\tilde L_{T_n}|^p$ is bounded.
\hfill $\Box$

\subsection{Proof of Theorem~\ref{prop:neininger_BRW_CLT}}\label{subsec:prop:neininger_BRW_proof}
We have to show that on the probability space $(\Omega,\cF,\P)$,
\begin{equation}\label{eq:neininger_BRW_main}
\cL \left\{\sqrt{\frac{\lambda n}{\log n}} \left( \tilde L_{\infty} - \frac{S_{T_n}-\frac{d}{\lambda}n\log n} {n}\right)\Bigg | \cG_{n}\right\}\toasw \left\{\omega \mapsto \Normal_{0, \sigma^2\tau^2}\right\},
\end{equation}
where we recall from~\eqref{eq:tilde_L_infty} that  $\tilde L_{\infty} = \frac{L_{\infty}}{N_{\infty}} - \frac{d}{\lambda} \log N_{\infty}$. Instead, we will show that on the product space $(\overline \Omega,\overline \cF,\overline\P)$,
\begin{equation}\label{eq:neininger_BRW_main1}
\cL \left\{\sqrt{\frac{\lambda n}{\log n}} \left( \tilde L_{\infty} - \frac{S_{T_n}-\frac{d}{\lambda}n\log n} {n}\right)\Bigg | \cF_{T_n}\right\}\toasw \left\{\omega \mapsto \Normal_{0, \sigma^2\tau^2}\right\}.
\end{equation}
Assuming that we have established~\eqref{eq:neininger_BRW_main1}, let us prove~\eqref{eq:neininger_BRW_main}. Note that $\cF_{T_n} = \cF'_n \otimes \cG_n$, so that Proposition~\ref{prop:asw_total_expectation} allows us to replace $\cF_{T_n}$ in~\eqref{eq:neininger_BRW_main1} by the smaller $\sigma$-algebra $\{\emptyset,\Omega\}\otimes \cG_n$. But since the random variable on the left-hand side of~\eqref{eq:neininger_BRW_main1} (defined on the product space $\overline \Omega=\Omega'\times \Omega$) depends only on the coordinate $\omega\in\Omega$, we can discard the component $\Omega'$ and obtain~\eqref{eq:neininger_BRW_main}. In the sequel, we are occupied with the proof of~\eqref{eq:neininger_BRW_main1}.

\vspace*{2mm}
\noindent
\textit{Step 1: Proof strategy.} Recalling that $T_n$ is the time at which the $n$-th particle is born and using~\eqref{eq:aux_as0}, we can write Theorem~\ref{theo:neininger_CLT_discrete} in the following form: On the product space $(\overline \Omega,\overline \cF,\overline\P)$,
\begin{equation}\label{eq:aux_neininger}
\cL\left\{
\sqrt{\frac{\eee^{\lambda T_n}}{T_n N_{\infty}}}
\left(L_{\infty}- \frac{S_{T_n} - d n T_n }{\eee^{\lambda T_n}} \right)\Bigg | \cF_{T_n}\right\}
\toasw
\left\{\omega \mapsto \Normal_{0, \sigma^2\tau^2 }\right\}.
\end{equation}
Inserting~\eqref{eq:aux_as0} into equation~\eqref{eq:aux_neininger} formally, we obtain the required relation~\eqref{eq:neininger_BRW_main1}. However, in order to obtain~\eqref{eq:neininger_BRW_main1} rigorously we need slightly more precise asymptotics than those given in~\eqref{eq:aux_as0}.

\vspace*{2mm}
\noindent
\textit{Step 2: Precise asymptotics for $T_n$.}
We prove that
\begin{equation}\label{eq:lil_chow_teicher_1}
\limsup_{n\to \infty} \frac{\left|N_{\infty}\eee^{\lambda T_n} - n\right|}{\sqrt{2n\log\log n}}
=
\limsup_{n\to\infty} \frac{\left|\lambda T_n - \log \frac{n}{N_{\infty}}\right|}{\sqrt{2 n^{-1}\log \log n}}
=
1
\quad
\text{a.s.}
\end{equation}
We need Kendall's theorem; see~\cite[Thm.~2 on p.~127]{athreya_ney_book}. It states that conditionally on $N_{\infty}=y>0$, the points $P_{n}:= y (\eee^{\lambda T_n}-1)$, $n\geq 2$, form a homogeneous Poisson point process on $(0,\infty)$. By the law of the iterated logarithm for the Poisson process, we have
$$
\limsup_{n\to\infty} \frac{|P_n-n|}{\sqrt{2n\log\log n}} = 1.
$$
After standard transformations, we obtain~\eqref{eq:lil_chow_teicher_1}. Alternatively, the second limit in~\eqref{eq:lil_chow_teicher_1} could be computed using Heyde's~\cite{heyde_LIL} law of the iterated logarithm applied to the Yule process $N_t$ evaluated at time $t=T_n$.

\vspace*{2mm}
\noindent
\textit{Step 3: Completing the proof.}
We can represent the random variable on the left-hand side of~\eqref{eq:neininger_BRW_main1} as a sum of three terms:
\begin{multline}\label{eq:aux_dec3}
\sqrt{\frac{\lambda n}{\log n}}\left(\tilde L_{\infty} - \frac{S_{T_n}-\frac{d}{\lambda}n\log n} {n}\right)
\\
=
\sqrt{\frac{\lambda T_n}{\log n}\, \frac{\eee^{\lambda T_n} N_{\infty}}{n}}\cdot \sqrt{\frac{\eee^{\lambda T_n}}{T_n N_{\infty}}} \left(L_{\infty}- \frac{S_{T_n}-dnT_n} {\eee^{\lambda T_n}} \right)
\\+
\sqrt{\frac{\lambda n}{\log n}}L_{\infty} \left(\frac{1}{N_{\infty}} - \frac{\eee^{\lambda T_n}}{n}\right)
+
\sqrt{\frac{\lambda n}{\log n}}\frac{d}{\lambda} \left(\log\frac{n}{N_{\infty}}-\lambda T_n\right)
\end{multline}
Denote the three summands on the right-hand side of~\eqref{eq:aux_dec3} by $R_n^{(1)}, R_n^{(2)}, R_n^{(3)}$.
It follows from~\eqref{eq:lil_chow_teicher_1} and~\eqref{eq:aux_as0} that
\begin{equation}\label{eq:aux_dec3_terms_negligible}
\lim_{n\to\infty} R_n^{(2)} = \lim_{n\to\infty} R_n^{(3)} = 0 \quad \text{a.s.},
\quad
\lim_{n\to\infty} \sqrt{\frac{\lambda T_n}{\log n}\, \frac{\eee^{\lambda T_n} N_{\infty}}{n}}=1 \quad \text{a.s.}
\end{equation}
Applying to the decomposition on the right-hand side of~\eqref{eq:aux_dec3}  equations~\eqref{eq:aux_neininger} and~\eqref{eq:aux_dec3_terms_negligible} together with Proposition~\ref{prop:asw_properties}, we obtain the required equation~\eqref{eq:neininger_BRW_main1}.
\hfill $\Box$

\section*{Acknowledgment} Zakhar Kabluchko is grateful to Pascal Maillard for a discussion related to decomposition~\eqref{eq:basic_decomposition}. The authors are grateful to Henning Sulzbach and the unknown referee for pointing out a gap in the previous version of the paper and making a number of useful comments.

\bibliographystyle{plainnat}
\bibliography{clt_quicksort_bib}

\begin{thebibliography}{40}
\providecommand{\natexlab}[1]{#1}
\providecommand{\url}[1]{\texttt{#1}}
\expandafter\ifx\csname urlstyle\endcsname\relax
  \providecommand{\doi}[1]{doi: #1}\else
  \providecommand{\doi}{doi: \begingroup \urlstyle{rm}\Url}\fi

\bibitem[{Aldous} and {Eagleson}(1978)]{aldous_eagleson}
D.~J. {Aldous} and G.~K. {Eagleson}.
\newblock {On mixing and stability of limit theorems.}
\newblock \emph{{Ann. Probab.}}, 6:\penalty0 325--331, 1978.

\bibitem[Asmussen and Hering(1983)]{asmussen_hering_book}
S.~Asmussen and H.~Hering.
\newblock \emph{Branching processes}, volume~3 of \emph{Progress in Probability
  and Statistics}.
\newblock Birkh\"auser Boston, Inc., Boston, MA, 1983.

\bibitem[Athreya(1968)]{athreya}
K.~Athreya.
\newblock Some results on multitype continuous time {M}arkov branching
  processes.
\newblock \emph{Ann. Math. Statist.}, 39:\penalty0 347--357, 1968.

\bibitem[Athreya and Ney(2004)]{athreya_ney_book}
K.~B. Athreya and P.~E. Ney.
\newblock \emph{Branching processes}.
\newblock Dover Publications, Inc., Mineola, NY, 2004.
\newblock Reprint of the 1972 original.

\bibitem[{Berti} et~al.(2006){Berti}, {Pratelli}, and {Rigo}]{berti_etal}
P.~{Berti}, L.~{Pratelli}, and P.~{Rigo}.
\newblock {Almost sure weak convergence of random probability measures.}
\newblock \emph{{Stochastics}}, 78\penalty0 (2):\penalty0 91--97, 2006.

\bibitem[Biggins(1992)]{biggins_uniform}
J.~D. Biggins.
\newblock Uniform convergence of martingales in the branching random walk.
\newblock \emph{Ann. Probab.}, 20\penalty0 (1):\penalty0 137--151, 1992.

\bibitem[Billingsley(1999)]{billingsley_book}
P.~Billingsley.
\newblock \emph{Convergence of probability measures}.
\newblock Wiley Series in Probability and Statistics. John Wiley \& Sons, Inc.,
  New York, second edition, 1999.

\bibitem[Chauvin and Rouault(2004)]{chauvin_rouault}
B.~Chauvin and A.~Rouault.
\newblock {Connecting {Y}ule process, {B}isection and {B}inary {S}earch {T}ree
  via martingales.}
\newblock \emph{{J. Iran. Statist. Soc.}}, 3\penalty0 (2):\penalty0 89--116,
  2004.

\bibitem[Chauvin et~al.(2005)Chauvin, Klein, Marckert, and
  Rouault]{chauvin_etal}
B.~Chauvin, T.~Klein, J.-F. Marckert, and A.~Rouault.
\newblock {Martingales and profile of binary search trees.}
\newblock \emph{{Elect. J. Probab.}}, 2005.

\bibitem[Devroye(1998)]{devroye}
L.~Devroye.
\newblock {Branching processes and their applications in the analysis of tree
  structures and tree algorithms.}
\newblock {Habib, M. (ed.) et al., Probabilistic methods for algorithmic
  discrete mathematics. Berlin: Springer. Algorithms Comb., 16, 249--314},
  1998.

\bibitem[Dobrow and Fill(1999)]{dobrow_fill}
R.~P. Dobrow and J.~A. Fill.
\newblock Total path length for random recursive trees.
\newblock \emph{Combin. Probab. Comput.}, 8\penalty0 (4):\penalty0 317--333,
  1999.
\newblock Random graphs and combinatorial structures (Oberwolfach, 1997).

\bibitem[{Drmota}(2009)]{drmota_book}
M.~{Drmota}.
\newblock \emph{{Random trees. An interplay between combinatorics and
  probability.}}
\newblock Wien: Springer, 2009.

\bibitem[Evans et~al.(2012)Evans, Gr{\"u}bel, and Wakolbinger]{EGW1}
{S. N.} Evans, R.~Gr{\"u}bel, and A.~Wakolbinger.
\newblock Trickle-down processes and their boundaries.
\newblock \emph{Electron. J. Probab.}, 17:\penalty0 1--58, 2012.

\bibitem[Fuchs(2013)]{fuchs}
M.~Fuchs.
\newblock A note on the quicksort asymptotics.
\newblock \emph{Rand. Struct. Alg.}, 2013.
\newblock To appear.

\bibitem[Gr{\"u}bel and Michailow(2014)]{gruebel_mihailow}
R.~Gr{\"u}bel and I.~Michailow.
\newblock {{R}andom {R}ecursive {T}rees: {A} boundary theory approach}.
\newblock Preprint at http://arxiv.org/abs/1406.7614, 2014.

\bibitem[Grzenda and Zi\c{e}ba(2008)]{grzenda_zieba}
W.~Grzenda and W.~Zi\c{e}ba.
\newblock {Conditional central limit theorem.}
\newblock \emph{Int. Math. Forum}, 3\penalty0 (31):\penalty0 1521--1528, 2008.

\bibitem[Hall and Heyde(1980)]{hall_heyde_book}
P.~Hall and C.~C. Heyde.
\newblock \emph{Martingale limit theory and its application}.
\newblock Academic Press, Inc., New York-London, 1980.
\newblock Probability and Mathematical Statistics.

\bibitem[Heyde(1970)]{heyde}
C.~C. Heyde.
\newblock A rate of convergence result for the super-critical
  {G}alton--{W}atson process.
\newblock \emph{J. Appl. Probability}, 7:\penalty0 451--454, 1970.

\bibitem[Heyde(1971)]{heyde_LIL}
C.~C. Heyde.
\newblock Some almost sure convergence theorems for branching processes.
\newblock \emph{Z. Wahrscheinlichkeitstheorie und Verw. Gebiete}, 20:\penalty0
  189--192, 1971.

\bibitem[Heyde(1977)]{heyde_CLT_LIL}
C.~C. Heyde.
\newblock On central limit and iterated logarithm supplements to the martingale
  convergence theorem.
\newblock \emph{J. Appl. Probability}, 14\penalty0 (4):\penalty0 758--775,
  1977.

\bibitem[Kabluchko and Klimovsky(2014{\natexlab{a}})]{kabluchko_klimovsky1}
Z.~Kabluchko and A.~Klimovsky.
\newblock Complex random energy model: zeros and fluctuations.
\newblock \emph{Probab. Th. Related Fields}, 158\penalty0 (1--2):\penalty0
  159--196, 2014{\natexlab{a}}.

\bibitem[Kabluchko and Klimovsky(2014{\natexlab{b}})]{kabluchko_klimovsky2}
Z.~Kabluchko and A.~Klimovsky.
\newblock Generalized {R}andom {E}nergy {M}odel at complex temperatures.
\newblock \emph{Preprint}, 2014{\natexlab{b}}.
\newblock Available at http://arxiv.org/abs/1402.2142.

\bibitem[Landers and Rogge(1972)]{landers_rogge}
D.~Landers and L.~Rogge.
\newblock A generalized {M}artingale theorem.
\newblock \emph{Z. Wahrscheinlichkeitstheorie und Verw. Gebiete}, 23:\penalty0
  289--292, 1972.

\bibitem[{Loeve}(1963)]{loeve_book}
M.~{Loeve}.
\newblock {Probability theory. 3rd edition.}
\newblock {(The University Series in Higher Mathematics.) Princeton, N.
  J.-Toronto-New York-London: Van Nostrand}, 1963.

\bibitem[{Loynes}(1969)]{loynes}
R.~M. {Loynes}.
\newblock {The central limit theorem for backwards martingales.}
\newblock \emph{{Z. Wahrscheinlichkeitstheor. Verw. Geb.}}, 13:\penalty0 1--8,
  1969.

\bibitem[Luschgy(2013)]{luschgy_book}
H.~Luschgy.
\newblock \emph{{Martingale in diskreter Zeit. Theorie und Anwendungen.}}
\newblock Berlin: Springer Spektrum, 2013.

\bibitem[Mahmoud(1991)]{mahmoud}
H.~M. Mahmoud.
\newblock Limiting distributions for path lengths in recursive trees.
\newblock \emph{Probab. Engrg. Inform. Sci.}, 5\penalty0 (1):\penalty0 53--59,
  1991.

\bibitem[Neininger(2013)]{neininger}
R.~Neininger.
\newblock Refined quicksort asymptotics.
\newblock \emph{Rand. Struct. Alg.}, 2013.
\newblock To appear.

\bibitem[Nowak and Zi\c{e}ba(2005)]{nowak_zieba}
W.~Nowak and W.~Zi\c{e}ba.
\newblock {Types of conditional convergence.}
\newblock \emph{Ann. Univ. Mariae Curie--Sk{\l}odowska, Sect. A}, 59:\penalty0
  97--105, 2005.

\bibitem[R{\'e}gnier(1989)]{regnier}
M.~R{\'e}gnier.
\newblock A limiting distribution for quicksort.
\newblock \emph{RAIRO Inform. Th\'eor. Appl.}, 23\penalty0 (3):\penalty0
  335--343, 1989.

\bibitem[R{\'e}nyi(1958)]{renyi_sets}
A.~R{\'e}nyi.
\newblock On mixing sequences of sets.
\newblock \emph{Acta Math. Acad. Sci. Hungar.}, 9:\penalty0 215--228, 1958.

\bibitem[{R\'enyi}(1963)]{renyi_stable}
A.~{R\'enyi}.
\newblock {On stable sequences of events.}
\newblock \emph{{Sankhy\=a, Ser. A}}, 25:\penalty0 293--302, 1963.

\bibitem[R{\'e}nyi and R{\'e}v{\'e}sz(1958)]{renyi_revesz_mixing_variables}
A.~R{\'e}nyi and P.~R{\'e}v{\'e}sz.
\newblock On mixing sequences of random variables.
\newblock \emph{Acta Math. Acad. Sci. Hungar}, 9:\penalty0 389--393, 1958.

\bibitem[Rosenthal(1970)]{rosenthal}
H.~P. Rosenthal.
\newblock On the subspaces of {$L^{p}$} {$(p>2)$} spanned by sequences of
  independent random variables.
\newblock \emph{Israel J. Math.}, 8:\penalty0 273--303, 1970.

\bibitem[R{\"o}sler(1991)]{roesler}
U.~R{\"o}sler.
\newblock A limit theorem for ``{Q}uicksort''.
\newblock \emph{RAIRO Inform. Th\'eor. Appl.}, 25\penalty0 (1):\penalty0
  85--100, 1991.

\bibitem[R{\"o}sler et~al.(2002)R{\"o}sler, Topchii, and
  Vatutin]{roesler_topchii_vatutin1}
U.~R{\"o}sler, V.~Topchii, and V.~Vatutin.
\newblock Convergence rate for stable weighted branching processes.
\newblock In \emph{Mathematics and computer science, {II} ({V}ersailles,
  2002)}, Trends Math., pages 441--453. Birkh\"auser, Basel, 2002.

\bibitem[Shirai(2012)]{shirai}
T.~Shirai.
\newblock Limit theorems for random analytic functions and their zeros.
\newblock \emph{RIMS K{\^{o}}ky{\^{u}}roku Bessatsu}, B34:\penalty0 335--359,
  2012.

\bibitem[Sodin and Tsirelson(2004)]{sodin_tsirelson}
M.~Sodin and B.~Tsirelson.
\newblock Random complex zeroes. {I}. {A}symptotic normality.
\newblock \emph{Israel J. Math.}, 144:\penalty0 125--149, 2004.

\bibitem[Uchiyama(1982)]{uchiyama}
K.~Uchiyama.
\newblock Spatial growth of a branching process of particles living in {${\bf
  R}^{d}$}.
\newblock \emph{Ann. Probab.}, 10\penalty0 (4):\penalty0 896--918, 1982.

\bibitem[von Bahr and Esseen(1965)]{von_bahr_esseen}
B.~von Bahr and C.-G. Esseen.
\newblock Inequalities for the {$r$}th absolute moment of a sum of random
  variables, {$1\leq r\leq 2$}.
\newblock \emph{Ann. Math. Statist}, 36:\penalty0 299--303, 1965.

\end{thebibliography}

\end{document}